\numberwithin{equation}{section}
\theoremstyle{definition}
\numberwithin{equation}{section}
\newcommand{\ncom}{\newcommand}
\ncom{\beq}{\begin{equation}}
\ncom{\eeq}{\end{equation}}
\ncom{\bea}{\begin{eqnarray*}}
\ncom{\eea}{\end{eqnarray*}}
\ncom{\beqa}{\begin{eqnarray}}
\ncom{\eeqa}{\end{eqnarray}}
\ncom{\nno}{\nonumber}
\ncom{\non}{\nonumber}
\ncom{\ds}{\displaystyle}
\ncom{\half}{\frac{1}{2}}
\ncom{\mbx}{\makebox{.25cm}}
\ncom{\hs}{\mbox{\hspace{.25cm}}}
\ncom{\rar}{\rightarrow}
\ncom{\Rar}{\Rightarrow}
\ncom{\noin}{\noindent}
\ncom{\bc}{\begin{center}}
\ncom{\ec}{\end{center}}
\ncom{\sz}{\scriptsize}
\ncom{\rf}{\ref}
\ncom{\s}{\sqrt{2}}
\ncom{\sgm}{\sigma}
\ncom{\Sgm}{\Sigma}
\ncom{\psgm}{\sigma^{\prime}}
\ncom{\dt}{\delta}
\ncom{\Dt}{\Delta}
\ncom{\lmd}{\lambda}
\ncom{\Lmd}{\Lambda}
\ncom{\Th}{\Theta}
\ncom{\e}{\eta}
\ncom{\eps}{\epsilon}
\ncom{\pcc}{\stackrel{P}{>}}
\ncom{\lp}{\stackrel{L_{p}}{>}}
\ncom{\dist}{{\rm\,dist}}
\ncom{\sspan}{{\rm\,span}}
\ncom{\re}{{\rm Re\,}}
\ncom{\im}{{\rm Im\,}}
\ncom{\sgn}{{\rm sgn\,}}
\ncom{\ba}{\begin{array}}
\ncom{\ea}{\end{array}}
\ncom{\hone}{\mbox{\hspace{1em}}}
\ncom{\htwo}{\mbox{\hspace{2em}}}
\ncom{\hthree}{\mbox{\hspace{3em}}}
\ncom{\hfour}{\mbox{\hspace{4em}}}
\ncom{\vone}{\vskip 2ex}
\ncom{\vtwo}{\vskip 4ex}
\ncom{\vonee}{\vskip 1.5ex}
\ncom{\vthree}{\vskip 6ex}
\ncom{\vfour}{\vspace*{8ex}}
\ncom{\norm}{\|\;\;\|}
\ncom{\integ}[4]{\int_{#1}^{#2}\,{#3}\,d{#4}}
\ncom{\vspan}[1]{{{\rm\,span}\{ #1 \}}}
\ncom{\dm}[1]{ {\displaystyle{#1} } }
\ncom{\ri}[1]{{#1} \index{#1}}
\newtheorem{theorem}{\bf Theorem}[section]
\newtheorem{remark}{\bf Remark}[section]
\newtheorem{proposition}{Proposition}[section]
\newtheorem{corollary}{Corollary}[section]
\newtheoremstyle
    {remarkstyle}
    {}
    {11pt}
    {}
    {}
    {\bfseries}
    {:}
    {     }
    {\thmname{#1} \thmnumber{#2} }
\theoremstyle{remarkstyle}
\def\eps{\varepsilon}
\begin{document}
\title{Generalized Counting Process: its Non-Homogeneous and Time-Changed Versions}
\author[Kuldeep Kumar Kataria]{Kuldeep Kumar Kataria}
\address{Kuldeep Kumar Kataria, Department of Mathematics, Indian Institute of Technology Bhilai, Raipur 492015, India.}
\email{kuldeepk@iitbhilai.ac.in}
\author[Mostafizar Khandakar]{Mostafizar Khandakar}
\address{Mostafizar Khandakar, Department of Mathematics,
	Indian Institute of Technology Bombay, Powai, Mumbai 400076, India.}
\email{mostafizar@math.iitb.ac.in}
\author[Palaniappan Vellaisamy]{Palaniappan Vellaisamy}
\address{Palaniappan Vellaisamy, Department of Mathematics,
		Indian Institute of Technology Bombay, Powai, Mumbai 400076, India.}
\email{pv@math.iitb.ac.in}
\subjclass[2010]{Primary: 60G22; 60G55; Secondary: 60G51; 91B30}
\keywords{generalized counting process; Bern\v{s}tein function; multistable subordinator; time-change; non-homogenous Poisson  process; ruin probability; long-range dependence property.}
\date{October 8, 2022}
\begin{abstract}
 We introduce a non-homogeneous version of the generalized counting process (GCP), namely, the non-homogeneous generalized counting process (NGCP). We time-change the NGCP by an independent inverse stable subordinator to obtain its fractional version, and call it as the non-homogeneous generalized fractional counting process (NGFCP). A generalization of the NGFCP is obtained by time-changing the NGCP with an independent inverse subordinator. We derive the system of governing differential-integral equations for the marginal distributions of the increments of NGCP, NGFCP and its generalization. Then, we consider the GCP time-changed by a multistable subordinator and obtain its L\'evy measure, associated Bern\v{s}tein function and distribution of the first passage times. The GCP and its fractional version, that is, the generalized fractional counting process when time-changed by a L\'evy subordinator are known as  the time-changed generalized counting process-I (TCGCP-I) and the time-changed generalized fractional counting process-I (TCGFCP-I), respectively.  We obtain the distribution of first passage times and related governing equations for the TCGCP-I. An application of the TCGCP-I to ruin theory is discussed. We obtain the conditional distribution of the $k$th order
statistic from a sample whose size is modelled by a  particular case of TCGFCP-I, namely, the time fractional negative binomial process. Later, we consider a fractional version of the TCGCP-I and obtain the system of differential equations that governs its state probabilities. Its mean, variance, covariance, {\it etc.} are obtained and using which its long-range dependence property is established. Some results for its  two particular cases are obtained.
\end{abstract}
\maketitle
\section{Introduction}
Di Crescenzo {\it et al.} (2016) introduced and studied a L\'evy process $\{M(t)\}_{t\ge0}$, namely, the generalized counting process (GCP). Its transition probabilities are given by 
\begin{equation}\label{transition}
	\mathrm{Pr}\{M(t+h)=n|M(t)=m\}=\begin{cases*}
		1-\Lambda h+o(h), \ n=m,\\
		\lambda_{j}h+o(h), \ n=m+j, \ j=1,2,\dots,k,\\
		o(h), \ n>m+k,
	\end{cases*}
\end{equation}
where $\lambda_{j}$'s are positive rates,  $\Lambda=\lambda_{1}+\lambda_{2}+\dots+\lambda_{k}$ and $o(h)\to 0$ as $h\to 0$. From \eqref{transition}, it is evident that the GCP performs $k$ kinds of jumps of amplitude $1,2,\dots,k$ with rates $\lambda_{1}, \lambda_{2},\dots,\lambda_{k}$, respectively. Its fractional version, namely, the generalized fractional counting process (GFCP) is obtained by time-changing the GCP by an independent inverse stable subordinator $\{Y_{\beta}(t)\}_{t\ge0}$, $0<\beta<1$, that is, $\{M\left(Y_{\beta}(t)\right)\}_{t\ge0}$. We denote the GFCP by $\{M^{\beta}(t)\}_{t\ge0}$. Its state probabilities $p^{\beta}(n,t)=\mathrm{Pr}\{M^{\beta}(t)=n\}$ satisfy the following system of fractional differential equations (see Di Crescenzo {\it et al.} (2016)):
\begin{equation}\label{cre}
\frac{\mathrm{d}^{\beta}}{\mathrm{d}t^{\beta}}p^{\beta}(n,t)=-\Lambda p^{\beta}(n,t)+	\sum_{j=1}^{n\wedge k}\lambda_{j}p^{\beta}(n-j,t), \ n\ge0,
\end{equation}
with the initial conditions
\begin{equation*}
	p^{\beta}(n,0)=\begin{cases}
		1, \ n=0,\\
		0, \ n\ge1.
	\end{cases}
\end{equation*}
Here, $\dfrac{\mathrm{d}^{\beta}}{\mathrm{d}t^{\beta}}$ is the Caputo fractional derivative defined in \eqref{caputo} and $n\wedge k=\min\{n,k\}$.

 For $\beta=1$, the GFCP reduces to the GCP. For $k=1$, the GFCP and the GCP reduces to the time fractional Poisson process (TFPP) (see Mainardi {\it et al.} (2004), Beghin and Orsingher (2009)) and the Poisson process, respectively.  Recently, several authors introduced and studied many counting processes such as the Poisson process of order $k$ (PPoK) (see Kostadinova and Minkova (2013)), P\'olya-Aeppli process of order $k$ (PAPoK) (see Chukova and Minkova (2015)), Bell-Touchard process (see Freud and Rodriguez (2022)), convoluted Poisson process (see Kataria and Khandakar (2021)), Poisson-logarthmic process (see Sendova and Minkova (2018)), {\it etc.} These processes along with their fractional variants are particular cases of the GFCP (see Kataria and Khandakar (2022b); Khandakar and Kataria (2022)). So, the  GFCP is a fractional generalization of the Poisson process. For other fractional generalizations of the Poisson process such as the space fractional Poisson process, space-time fractional Poisson process (STFPP), we refer the reader to Orsingher and Polito (2012). The fractional versions of the Poisson process has applications in several fields. For example, in the field of optics to describe the light propagation through non-homogeneous media, in queueing theory (see Cahoy {\it et al.} (2015)), in finance, in modeling the catastrophic events such as tsunami and earthquake, in fractional quantum mechanics (see Laskin (2009)), {\it etc.}

Let $\stackrel{d}{=}$ denotes equality in distribution. Also, let $\{X_{i}\}_{i\ge1}$ be a sequence of independent and identically distributed (iid) random varibles with the following probability mass function (pmf): 
\begin{equation}\label{xj}
	\mathrm{Pr}\{X_{1}=j\}=\lambda_{j}/\Lambda, \ j=1,2,\dots,k.
\end{equation}
 Di Crescenzo {\it et al.} (2016) showed that the GCP is equal in distribution to the following compound Poisson process:
\begin{equation}\label{compound}
	M(t)\stackrel{d}{=}\sum_{i=1}^{N(t)}X_{i},
\end{equation}
where the Poisson process $\{N(t)\}_{t\ge0}$ with intensity $\Lambda$ is independent of $\{X_{i}\}_{i\ge1}$.

Leonenko {\it et al.} (2017), (2019) introduced and studied a non-homogeneous version of the TFPP which is defined as a non-homogeneous Poisson process time-changed by an independent inverse stable subordinator.  Buchak and Sakhno (2019) obtained the governing equations for marginal distributions of a non-homogeneous Poisson process time-changed by inverse subordinator. Recently, Kadankova {\it et al.} (2021) studied a non-homogeneous version of the PPoK and the PAPoK along with their fractional extensions. 

 A multistable subordinator  $\{H(t)\}_{t\ge0}$ with time-dependent stability index $\beta(t)\in (0,1)$ is an inhomogeneous subordinator.
The Laplace transform of increments of an inhomogeneous subordinator
$\{R(t)\}_{t\ge0}$ is given by 
\begin{equation*}
	\mathbb{E}\left(e^{-u(R(t)-R(s))}\right)=e^{-\int_{s}^{t}f(u,\tau)\mathrm{d}\tau}, \ 0\le s\le t,
\end{equation*}
 which holds true under some suitable conditions (see Orsingher {\it et al.} (2016)). Here, $u\to f(u,t)$ is a Bern\v{s}tein function for each $t\ge0$ such that
\begin{equation}\label{bernstein}
	f(u,t)=\int_{0}^{\infty}(1-e^{-ux})\nu_{t}(\mathrm{d}x).
\end{equation}
A multistable subordinator is characterized by the following  L\'evy measure (see Beghin and Ricciuti (2019)):
\begin{equation}\label{nut}
	\nu_{t}(\mathrm{d}x)=\frac{\beta(t)x^{-\beta(t)-1}}{\Gamma(1-\beta(t))}\mathrm{d}x,\ x>0,
\end{equation}
where $t\to\beta(t)$ is a regular function. Its associated  Bern\v{s}tein function is $f(u,t)=u^{\beta(t)}$. The multistable subordinators extend the stable subordinators by considering the time-dependent stability index. These are proved to be useful in modeling phenomena in the field of finance and natural sciences. For more details on multistable subordinator, we refer the reader to Orsingher {\it et al.} (2016). Beghin and Ricciuti (2019) studied a time-changed Poisson process where the multistable subordinator is used as a time-changed component. 

Kataria and Khandakar (2022a) studied a time-changed version of the GFCP, namely, the time-changed  generalized fractional counting process-I (TCGFCP-I) $\{M_{f}^{\beta}(t)\}_{t\ge0}$. It is defined as follows :
\begin{equation*}
	M_{f}^{\beta}(t)\coloneqq M^{\beta}(D_{f}(t)),
\end{equation*}
where the GFCP is independent of the L\'evy subordinator $\{D_{f}(t)\}_{t\ge0}$. For $\beta=1$, it reduces to the time-changed  generalized counting process-I (TCGCP-I) $\{M_{f}(t)\}_{t\ge0}$ whose distribution of jumps is given by 
\begin{equation}\label{bgd43}
	\mathrm{Pr}\{M_{f}(h)=n\}=\begin{cases*}
		1-hf(\Lambda)+o(h), \ n=0,\\
		\displaystyle-h\sum_{ \Omega(k,n)}f^{(z_{k})}(\Lambda)\prod_{j=1}^{k}\frac{(-\lambda_{j})^{x_{j}}}{x_{j}!}+o(h), \ n\ge1,
	\end{cases*}
\end{equation}
where $\Omega(k,n)$ is given in \eqref{p(n,t)} and $z_{k}=x_{1}+x_{2}+\cdots+x_{k}$.

In Section \ref{section2}, we give some preliminary results that will be used later. In Section \ref{section3}, it is shown that if one GCP event has occurred till time $t\ge0$, then
the distribution of time of first GCP event is uniform. An alternate representation of the one-dimensional distribution of GFCP in terms of the derivatives of Mittag-Leffler function is obtained. We introduce a non-homogeneous version of the GCP, namely, the non-homogeneous generalized counting process (NGCP), and obtain its probability generating function (pgf) and the system of differential equations that governs its state probabilities. We time-change the NGCP by an independent inverse stable subordinator to obtain its fractional version, and call it as the non-homogeneous generalized fractional counting process (NGFCP). The systems of governing differential-integral equations for the marginal distributions of increments of these processes are derived. Also, we discuss the arrival times of NGFCP. A  generalization of the NGFCP is introduced by time-changing the NGCP with an independent inverse subordinator, for which we obtain the governing system of equations of marginal distributions that involves convolution-type derivative introduced and studied by Kochubei (2011) and Toaldo (2015).

In Section \ref{section4}, we consider the GCP time-changed by an independent multistable subordinator, that is, $\{M(H(t))\}_{t\ge0}$. It is observed that the process $\{M(H(t))\}_{t\ge0}$ is a non-homogeneous subordinator. We obtain the L\'evy measure, the Bern\v{s}tein function and the distribution of its first passage times. For a particular case of the process $\{M(H(t))\}_{t\ge0}$, we obtain its one-dimensional distribution and pgf. 

In Section \ref{section5}, we obtain some additional results for the TCGCP-I. The system of differential equations that governs its state probabilities, the distribution of its first passage times and related governing equations are obtained. For a particular case of TCGCP-I, we obtain the system of differential equations that governs its state probabilities. As an application, we consider a risk process in which the TCGCP-I is used to model the number of claims received by an insurance company. Some results for the joint distribution of the
time to ruin and the deficit at the time of ruin are derived.  We have shown that the one-dimensional distributions of TCGFCP-I are not infinitely divisible. We discuss the conditional distribution of the $k$th order
statistic from a sample whose size is modelled by a  particular case of TCGFCP-I, namely, the time fractional negative binomial process (see Vellaisamy and Maheshwari (2018b)).

In Section \ref{section6}, we consider the following time-changed process:
\begin{equation*}
	\mathscr{M}_{f}^{\alpha}(t)\coloneqq M_{f}(Y_{\alpha}(t))=M\left(D_{f}(Y_{\alpha}(t))\right), \ t\ge0,
\end{equation*}
where the L\'evy subordinator $\{D_{f}(t)\}_{t\ge0}$, the
inverse stable subordinator $\{Y_{\alpha}(t)\}_{t\ge0}$, $0<\alpha<1$ and the GCP $\{M(t)\}_{t\ge0}$ are independent of each other. We obtain its pgf, mean, variance, covariance and the system of differential equations that governs its state probabilities.  Its long-range dependence (LRD) property is established. We proved some results for two particular cases of this time-changed process. It is known that the process that exhibits LRD property has applications in several areas such as finance, econometrics, hydrology, internet data traffic modeling, {\it etc.}
\section{Preliminaries}\label{section2}
In this section, we collect some preliminary results for Mittag-Leffler function and some known processes. These will be used later.

\subsection{Mittag-Leffler function}
The two-parameter Mittag-Leffler function is defined as (see Kilbas {\it et al.} (2006), p. 42)
\begin{equation*}
	E_{\alpha,\beta}(x)\coloneqq\sum_{j=0}^{\infty} \frac{x^{j}}{\Gamma(j\alpha+\beta)}, \ x\in\mathbb{R},
\end{equation*}
where $\alpha>0$ and $\beta>0$. 

It reduces to Mittag-Leffler function for $\beta=1$. The following identity holds (see Beghin and Orsingher (2009), Eq. (5.1))
\begin{equation}\label{mittag}
	E_{-\beta,1-\beta}(1/x)=-xE_{\beta,1}(x),\ 0<\beta<1,\ x\neq 0.
\end{equation}
\subsection{Generalized counting process} 

The state probabilities $p(n,t)=\mathrm{Pr}\{M(t)=n\}$ of GCP are given by (see Di Crescenzo {\it et al.} (2016))
\begin{equation}\label{p(n,t)}
	p(n,t)=\sum_{\Omega(k,n)}\prod_{j=1}^{k}\frac{(\lambda_{j}t)^{x_{j}}}{x_{j}!}e^{-\Lambda t},\ \ n\ge 0,
\end{equation}
where $\Omega(k,n)\coloneqq\{(x_{1},x_{2},\dots,x_{k}):\sum_{j=1}^{k}jx_{j}=n,\ x_{j}\in\mathbb{N}_0\}$, $\mathbb{N}_0$ is the set of non-negative integers.

Its pgf $G(u,t)=\mathbb{E}\left(u^{M(t)}\right)$ is given by (see Kataria and Khandakar (2022b))
\begin{equation}\label{pgfmt}
	G(u,t)=\exp\Big(-\sum_{j=1}^{k}\lambda_{j}(1-u^{j})t\Big),  \ |u|\le 1.
\end{equation}
The following limiting result holds true for GCP (see Kataria and Khandakar (2022b), Eq. (17)):
\begin{equation}\label{limit}
	\lim\limits_{t\to \infty}\frac{M(t)}{t}=\sum_{j=1}^{k}j\lambda_{j},\ \text{in probability}.
\end{equation}
\subsection{Subordinator and its inverse}
Let 
\begin{equation}\label{fx}
	f(s)=c_{1}+c_{2}s+\int_{0}^{\infty}\left(1-e^{-sx}\right)\overline{\nu}(\mathrm{d}x),  \ s>0,\ c_{1}\ge0,\ c_{2}\geq0,
\end{equation}
 be a Bern\v stein function and $\overline{\nu}(\cdot)$ be a non-negative  L\'evy measure on $(0,\infty)$ such that
  $\displaystyle\int_0^\infty (x\wedge1)\,\overline{\nu}(\mathrm{d}x)<\infty$.  A subordinator $\{D_f(t)\}_{t\ge0}$ is a one-dimensional L\'evy process  which is characterized by the following Laplace transform (see Applebaum (2009), Section 1.3.2)):
\begin{equation*}
	\mathbb{E}\left(e^{-sD_f(t)}\right)=e^{-tf(s)}.
\end{equation*}

Its first passage time is called the inverse subordinator. It is defined as
\begin{equation}\label{Yf}
	Y_f (t)\coloneqq\inf\{x\ge0: D_f (x)> t\}, \ t\ge0.
\end{equation}

Let $h_{f}(t,x)$ be the density of $\{Y_{f}(t)\}_{t\ge0}$. Its Laplace transform is given by (see  Toaldo (2015)):
\begin{equation}\label{lhft}
	\tilde{h}_{f}(s,x)=\int_{0}^{\infty}e^{-st}h_{f}(t,x)\mathrm{d}t=\frac{f(s)}{s}e^{-xf(s)},
\end{equation}
provided that the following condition holds:

\medskip
\noindent   Condition {\bf I.}
$\overline{\nu}(0,\infty)=\infty$ and the tail $\nu(x)=c_{1}+\overline{\nu}(x,\infty)$ of the L\'evy measure $\overline{\nu}(\cdot)$ is absolutely continuous.

Kochubei (2011) and Toaldo (2015) discussed a new type of differential operator that generalizes the Caputo fractional derivative and Riemann-Liouville fractional
	derivative. Let $u(\cdot)$ be an absolutely continuous function. Its generalized Caputo derivative associated with the Bern\v{s}tein function $f$ is defined as follows (see Toaldo (2015), Definition 2.4):
\begin{equation}\label{fDt}
^f{\mathcal D}_t u(t)\coloneqq c_{2} \frac{d}{dt}u(t)+\int_{0}^{t}\frac{\partial }{\partial t }u(t-x)\nu (x)\mathrm{d}x.
\end{equation}
 It is related to  the generalized Riemann-Liouville
derivative $^f\mathscr{D}_t$ as follows  (see Toaldo (2015), Proposition 2.7):
\begin{equation}\label{relation}
	^f\mathscr{D}_t u(t)= ^f\mathcal{D}_t u(t)+\nu(t) u(0).
\end{equation}

If  $|u(t)|\leq C e^{s_0t}$ where $C$ and $s_0$ are some constants, then the Laplace transform of generalized Caputo derivative is given by (see Toaldo (2015), Lemma 2.5)
\begin{equation}\label{LfDt}
	\mathcal{L}\left(^f\mathcal{D}_t u(t);s\right)=f(s)\tilde{u}(s)-\frac{f(s)}{s}u(0), \  s>s_0.
\end{equation}

For $f(s)=s^{\beta}$, $\beta\in (0,1)$, the derivative \eqref{fDt} reduces to (see Toaldo (2015), Remark 2.6):
\begin{equation*}
^f\mathcal{D}_t u(t)=\frac{\mathrm{d}^{\beta}}{\mathrm{d}t^{\beta}}u(t),
\end{equation*}
where $\dfrac{\mathrm{d}^{\beta}}{\mathrm{d}t^{\beta}}$ is the Caputo fractional
derivative defined as (see Kilbas {\it et al.} (2006))
\begin{equation}\label{caputo}
	\frac{\mathrm{d}^{\beta}}{\mathrm{d}t^{\beta}}u(t):=\left\{
	\begin{array}{ll}
		\dfrac{1}{\Gamma{(1-\beta)}}\displaystyle\int^t_{0} (t-s)^{-\beta}u'(s)\,\mathrm{d}s,\  0<\beta<1,\\\\
		u'(t),\ \beta=1.
	\end{array}
	\right.
\end{equation}
Its Laplace transform is given by (see Kilbas {\it et al.} (2006), Eq. (5.3.3))
\begin{equation}\label{lc}
	\mathcal{L}\Big(\frac{\mathrm{d}^{\beta}}{\mathrm{d}t^{\beta}}u(t);s\Big)=s^{\beta}\tilde{u}(s)-s^{\beta-1}u(0), \ s>0.
\end{equation}

\subsection{Gamma subordinator}
The probability density function $g(x,t)$ of a gamma subordinator $\{Z(t)\}_{t\ge0}$ is given by
\begin{equation*}
	g(x,t)=\frac{a^{bt}}{\Gamma(bt)}x^{bt-1}e^{-ax}, \ x>0,
\end{equation*}
where $a>0$ and $b>0$. Its associated Bern\v stein function is (see Applebaum (2009), p. 55)
\begin{equation}\label{gamma}
	f_{1}(s)=b\log(1+s/a), \ s>0.
\end{equation}
For $x\ge0$, $t\geq 0$, the density function of gamma subordinator satisfies (see Beghin (2014), Lemma 2.1)
\begin{equation}
	\left\{
	\begin{array}{l}
		\frac{\partial }{\partial x}g(x,t)=-b(1-e^{-\partial _{t}/a})g(x,t)
		\\
		g(x,0)=\delta (x) \\
		\lim_{|x|\rightarrow +\infty }g(x,t)=0
	\end{array}
	\right. ,  \label{res}
\end{equation}%
where $\delta (x)$ is the Dirac delta function and $e^{-\partial _{t}/a}$ is a  shift operator. For any analytic function $f:\mathbb{R}\to \mathbb{R}$ and $c\in \mathbb{R}$, it is defined as follows:
\begin{equation}\label{shi}
	e^{c\partial_{t}}f(t)\coloneqq \sum_{n=0}^{\infty}\frac{(c\partial_{t})^{n}}{n!}f(t)=f(t+c),
\end{equation}
where $\partial_{t}=\frac{\partial}{\partial t}$.
\subsection{Stable subordinator and its inverse}
A stable subordinator $\{D_{\beta}(t)\}_{t\ge0}$, $0<\beta<1$ is a non-decreasing L\'evy process
whose Laplace transform is given by $\mathbb{E}(e^{-sD_{\beta}(t)})=e^{-ts^{\beta}}$, $s>0$. Its associated Bern\v stein function is $f_{2}(s)=s^{\beta}$. Its first passage
time $\{Y_{\beta}(t)\}_{t\ge0}$ which is defined as $Y_{\beta}(t)\coloneqq\inf\{x\ge 0:D_{\beta}(x)>t\}$ is called the inverse stable subordinator.

Let $h_{\beta}(t,x)$ be the density of $\{Y_{\beta}(t)\}_{t\ge0}$. Its Laplace transform is given by
\begin{equation}\label{lap}
	\tilde{h}_{\beta}(s,x)=s^{\beta-1}e^{-xs^{\beta}}.
\end{equation}
The mean and variance of $\{Y_{\beta}(t)\}_{t\ge0}$ are given by (see Leonenko {\it et al.} (2014))
\begin{align}\label{meani}
	\mathbb{E}\left(Y_{\beta}(t)\right)&=\frac{t^{\beta}}{\Gamma(\beta+1)},\\
	\operatorname{ Var}\left(Y_{\beta}(t)\right)&=\Big(\frac{2}{\Gamma(2\beta+1)}-\frac{1}{\Gamma^{2}(\beta+1)}\Big)t^{2\beta}\label{xswe331}.
\end{align}
For fixed $s$ and large $t$, the following asymptotic result holds for its covariance:
\begin{equation}\label{covin}
	\operatorname{Cov}\left(Y_{\beta}(s),Y_{\beta}(t)\right)\sim\frac{s^{2\beta}}{\Gamma(2\beta+1)}.
\end{equation}
\subsection{Fractional negative binomial processes}
The space-time fractional negative binomial process is defined as (see Kataria and Khandakar (2022c)) 
\begin{equation*}
	\mathcal{N}^{\alpha}_{\beta}(t)\coloneqq N^{\alpha,\beta}(Z(t)),\ t\ge0,
\end{equation*}
 where the STFPP $\{N^{\alpha,\beta}(t)\}_{t\ge0}$ is independent of gamma subordinator. 
For $\alpha=1$, it reduces to the time fractional negative binomial process (see Vellaisamy and Maheshwari (2018b)) and for $\beta=1$, it reduces to the space fractional negative binomial process (see Beghin and Vellaisamy (2018)). The pgf of time fractional negative binomial process is given by (see Kataria and Khandakar (2022c))
\begin{equation}\label{pgftfnbp}
\mathbb{E}\left(u^{\mathcal{N}^{1}_{\beta}(t)}\right)=\sum_{k=0}^{\infty}\frac{(-\lambda)^{k}(1-u)^{k}}{\Gamma(k\beta+1)}\frac{\Gamma (bt+k\beta)}{a^{k\beta}\Gamma(bt)},\ |u|\le 1.
\end{equation}
Its state probability is given by (see Vellaisamy and Maheshwari (2018b), Section 4.1)
\begin{equation}\label{tfnbppm}
	\mathrm{Pr}\{\mathcal{N}^{1}_{\beta}(t)=n\}=\sum_{l=n}^{\infty}(-1)^{l+n}\binom{l}{n}\frac{\Gamma(l\beta+bt)}{\Gamma(bt)\Gamma(\beta l+1)}\left(\frac{\lambda}{a^{\beta}}\right)^{l},\ n\ge0.
\end{equation}
\subsection{LRD property}

	Let $s>0$ be fixed. The process $\{X(t)\}_{t\ge0}$ is said to exhibit the LRD property if its correlation function has the following asymptotic behaviour (see D'Ovidio and Nane (2014), Maheshwari and Vellaisamy (2016)):
	\begin{equation*}
		\operatorname{Corr}(X(s),X(t))\sim c(s)t^{-\theta},\  \text{as}\ t\rightarrow\infty,
	\end{equation*}
	for some $c(s)>0$ and $\theta\in(0,1)$. If $\theta\in(1,2)$ then it is said to possesses the short-range dependence property.

\section{Non-homogeneous GCP and its fractional variants}\label{section3}
In this section, we aim to introduce and study a non-homogeneous version of the GFCP, namely, the non-homogeneous generalized fractional counting process. We start by giving some additional results related to the GCP and its fractional variant, the GFCP.

	Let $Z_{1}$ denote the time of first occurrence of GCP event in $[0,t]$. Then, 
\begin{align*}
	\mathrm{Pr}\{Z_{1}\le x|M(t)=1\}&=\frac{\mathrm{Pr}\{\text{exactly one occurrence in} \ [0,x], \text{no occurrence in} \ (x,t]\}}{\mathrm{Pr}\{M(t)=1\}}\\
	&=\frac{\mathrm{Pr}\{M(x)=1, M(t-x)=0\}}{\mathrm{Pr}\{M(t)=1\}}\\
	&=\frac{\lambda_{1}xe^{-\Lambda x}e^{-\Lambda (t-x)}}{\lambda_{1}te^{-\Lambda t}},\ (\text{using} \ \eqref{p(n,t)})\\
	&=\frac{x}{t},\ \ 0\le x\le t.
\end{align*}
So, the distribution of $Z_{1}$ given that exactly one GCP event has occoured in $[0,t]$ is uniform in  $[0,t]$.

Di Crescenzo {\it et al.} (2016) showed that
\begin{equation}\label{mtb}
	M^{\beta}(t)\stackrel{d}{=}M(T_{2\beta}(t)),\ t>0,
\end{equation}
where the GCP is independent of the random process $\{T_{2\beta}(t)\}_{t>0}$ whose distribution is given by the folded solution of the following Cauchy problem (see Orsingher and Beghin (2004)): 
\begin{equation}\label{diff}
	\frac{\mathrm{d}^{2\beta}}{\mathrm{d}t^{2\beta}}u(x,t)=\frac{\partial^{2}}{\partial x^{2}}u(x,t),\ t>0, \ x\in\mathbb{R},
\end{equation}
with $u(x,0)=\delta(x)$ for $0<\beta\le 1$ and $\frac{\partial^{}}{\partial t}u(x,0)=0$ for $1/2<\beta\le1$. The solution $u_{2\beta}(x,t)$ of \eqref{diff} is given by (see Beghin and Orsingher (2009)):
\begin{equation}\label{u2b}
	u_{2\beta}(x,t)=\frac{1}{2t^{\beta}}W_{-\beta, 1-\beta}\left(-\frac{|x|}{t^{\beta}}\right),\ t>0, \ x\in \mathbb{R},
\end{equation}
where $W_{\nu,\gamma}(\cdot)$ is the Wright function defined as follows:
\begin{equation*}
	W_{\nu,\gamma}(x)=\sum_{k=0}^{\infty}\frac{x^{k}}{k!\Gamma(k\nu+\gamma)},\ \nu>-1,\ \gamma >0,\ x\in \mathbb{R}.
\end{equation*}
Let 
\begin{equation}\label{ub}
	\bar{u}_{2\beta}(x,t)=\begin{cases*}
		2u_{2\beta}(x,t),\ x>0, \\
		0,\ x<0,
	\end{cases*}
\end{equation}
be the folded solution to \eqref{diff}. The following result gives an alternate version of the pmf of GFCP in terms of derivatives of Mittag-Leffler function.

\begin{proposition}\label{alternate}
	The pmf of GFCP is given by
	\begin{equation*}
		p^{\beta}(n,t)=\sum_{\Omega(k,n)}\prod_{j=1}^{k}\frac{\lambda_{j}^{x_{j}}}{x_{j}!}\frac{1}{\Lambda^{z_{k}}}\frac{\mathrm{d}^{z_{k}}}{\mathrm{d}s^{z_{k}}}\left[s^{z_{k}-1}E_{\beta,1}\left(-\frac{\Lambda t^{\beta}}{s}\right)\right]_{s=1},		
	\end{equation*}
where $z_{k}=x_{1}+x_{2}+\cdots+x_{k}$.
\end{proposition}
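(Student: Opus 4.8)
The plan is to realize the GFCP through its defining inverse-stable subordination $M^{\beta}(t)\stackrel{d}{=}M(Y_{\beta}(t))$ and to condition on the value of $Y_{\beta}(t)$. Writing $h_{\beta}(t,w)$ for the density of $Y_{\beta}(t)$ and inserting the GCP state probabilities from \eqref{p(n,t)}, one obtains
\begin{equation*}
p^{\beta}(n,t)=\int_{0}^{\infty}p(n,w)h_{\beta}(t,w)\,\mathrm{d}w=\sum_{\Omega(k,n)}\prod_{j=1}^{k}\frac{\lambda_{j}^{x_{j}}}{x_{j}!}\int_{0}^{\infty}w^{z_{k}}e^{-\Lambda w}h_{\beta}(t,w)\,\mathrm{d}w,
\end{equation*}
since $\prod_{j}(\lambda_{j}w)^{x_{j}}=w^{z_{k}}\prod_{j}\lambda_{j}^{x_{j}}$ with $z_{k}=x_{1}+\cdots+x_{k}$, and the sum over the finite set $\Omega(k,n)$ may be pulled outside the integral. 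Thus the whole proposition reduces to evaluating the single integral $J:=\int_{0}^{\infty}w^{z_{k}}e^{-\Lambda w}h_{\beta}(t,w)\,\mathrm{d}w$ and showing that it equals $\Lambda^{-z_{k}}\frac{\mathrm{d}^{z_{k}}}{\mathrm{d}s^{z_{k}}}\big[s^{z_{k}-1}E_{\beta,1}(-\Lambda t^{\beta}/s)\big]_{s=1}$.

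First I would record the Laplace-type identity $\int_{0}^{\infty}e^{-uw}h_{\beta}(t,w)\,\mathrm{d}w=E_{\beta,1}(-ut^{\beta})$. Taking the Laplace transform in $t$ (variable $\sigma$) and using \eqref{lap} gives $\int_{0}^{\infty}e^{-uw}\sigma^{\beta-1}e^{-w\sigma^{\beta}}\,\mathrm{d}w=\sigma^{\beta-1}/(u+\sigma^{\beta})$, which is precisely the Laplace transform of $E_{\beta,1}(-ut^{\beta})$, so the claimed identity follows by uniqueness. Next, writing $w^{z_{k}}e^{-\Lambda w}=(-1)^{z_{k}}\partial_{u}^{z_{k}}e^{-uw}\big|_{u=\Lambda}$ and differentiating under the integral sign yields
\begin{equation*}
J=(-1)^{z_{k}}\frac{\mathrm{d}^{z_{k}}}{\mathrm{d}u^{z_{k}}}E_{\beta,1}(-ut^{\beta})\Big|_{u=\Lambda}.
\end{equation*}

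It remains to recast this $u$-derivative in the form demanded by the statement, and for this I would use an elementary inversion identity: for an analytic $g(u)=\sum_{m}c_{m}u^{m}$ and $n\in\mathbb{N}$, $\frac{\mathrm{d}^{n}}{\mathrm{d}s^{n}}\big[s^{n-1}g(a/s)\big]_{s=1}=(-1)^{n}a^{n}g^{(n)}(a)$. This is proved by expanding $s^{n-1}g(a/s)=\sum_{m}c_{m}a^{m}s^{n-1-m}$: every term with $0\le m\le n-1$ is a monomial in $s$ of degree strictly below $n$ and is annihilated by $\mathrm{d}^{n}/\mathrm{d}s^{n}$, while for $m\ge n$ one has $\frac{\mathrm{d}^{n}}{\mathrm{d}s^{n}}s^{n-1-m}\big|_{s=1}=(-1)^{n}m!/(m-n)!$, which matches term by term the series $g^{(n)}(a)=\sum_{m\ge n}c_{m}\frac{m!}{(m-n)!}a^{m-n}$. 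Applying this with $g(u)=E_{\beta,1}(-ut^{\beta})$, $a=\Lambda$, $n=z_{k}$, and noting $g(\Lambda/s)=E_{\beta,1}(-\Lambda t^{\beta}/s)$, gives $\frac{\mathrm{d}^{z_{k}}}{\mathrm{d}s^{z_{k}}}\big[s^{z_{k}-1}E_{\beta,1}(-\Lambda t^{\beta}/s)\big]_{s=1}=(-1)^{z_{k}}\Lambda^{z_{k}}g^{(z_{k})}(\Lambda)=\Lambda^{z_{k}}J$; dividing by $\Lambda^{z_{k}}$ and substituting back into the displayed expression for $p^{\beta}(n,t)$ yields the proposition.

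The only delicate points are the interchanges—differentiation under the integral and the term-by-term differentiation of the Mittag-Leffler series—both of which are justified because $E_{\beta,1}$ is entire, so the relevant power series converge absolutely and locally uniformly and the sum over the finite index set $\Omega(k,n)$ poses no issue. Conceptually, the only real step is the inversion identity that trades the $u$-derivative at $u=\Lambda$ for the $s$-derivative of $s^{z_{k}-1}E_{\beta,1}(-\Lambda t^{\beta}/s)$ at $s=1$; I expect this bookkeeping to be the heart of the argument. If one prefers, the factor $E_{\beta,1}(-\Lambda t^{\beta}/s)$ can alternatively be rewritten via \eqref{mittag}, but the direct series computation above seems the cleanest route.
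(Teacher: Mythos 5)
Your proof is correct, and it takes a genuinely different route from the paper's. The paper works with the representation $M^{\beta}(t)\stackrel{d}{=}M(T_{2\beta}(t))$ of \eqref{mtb}, whose folded Wright-function density \eqref{u2b}--\eqref{ub} it expands as a power series; it integrates term by term against $e^{-\Lambda x}x^{m+z_{k}}$, rewrites the resulting products $(m+z_{k})(m+z_{k}-1)\cdots(m+1)$ as $\frac{\mathrm{d}^{z_{k}}}{\mathrm{d}s^{z_{k}}}s^{m+z_{k}}\big|_{s=1}$, recognizes the series as $s^{z_{k}}E_{-\beta,1-\beta}\left(-s/(\Lambda t^{\beta})\right)$, and only at the very end converts to $E_{\beta,1}$ via the negative-parameter identity \eqref{mittag}. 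You instead start from the defining time change $M^{\beta}(t)\stackrel{d}{=}M(Y_{\beta}(t))$ and never expand the density at all: your two tools are the Laplace functional $\int_{0}^{\infty}e^{-uw}h_{\beta}(t,w)\,\mathrm{d}w=E_{\beta,1}(-ut^{\beta})$ (justified from \eqref{lap} by a double Laplace transform and uniqueness) and the elementary inversion identity $\frac{\mathrm{d}^{n}}{\mathrm{d}s^{n}}\left[s^{n-1}g(a/s)\right]_{s=1}=(-1)^{n}a^{n}g^{(n)}(a)$, whose term-by-term verification you give and which is correct. Since $\bar{u}_{2\beta}(\cdot,t)$ is precisely the density of $Y_{\beta}(t)$, the two subordination representations describe the same process, so the difference is purely analytic. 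What your route buys: it bypasses the Wright function and the identity \eqref{mittag} entirely, isolates the combinatorial content in a clean, reusable lemma, and makes transparent why the answer is a rescaled $z_{k}$th derivative of $E_{\beta,1}(-\Lambda t^{\beta}/s)$. What the paper's route buys: it is a self-contained series computation that needs no Laplace-transform uniqueness argument and stays within the Wright/Mittag--Leffler calculus already set up in Section \ref{section2}. A small side benefit of your derivation is that it pins down the constant: in the paper's proof the prefactor $t^{-\beta}/\Lambda^{z_{k}+1}$ is silently dropped in the final displayed line and is only recovered when \eqref{mittag} is applied, a harmless typographical slip that your computation confirms.
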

\begin{proof}
	From \eqref{mtb} and \eqref{ub}, we have
	\begin{equation}\label{hdf}
		p^{\beta}(n,t)=\int_{0}^{\infty}p(n,x)\bar{u}_{2\beta}(x,t)\mathrm{d}x.
	\end{equation}
	On using \eqref{p(n,t)} and \eqref{u2b} in \eqref{hdf}, we get
	\begin{align*}
		p^{\beta}(n,t)&=\int_{0}^{\infty}\sum_{\Omega(k,n)}\prod_{j=1}^{k}\frac{\lambda_{j}^{x_{j}}}{x_{j}!}e^{-\Lambda x}x^{z_{k}}t^{-\beta}W_{-\beta,1-\beta}(-xt^{-\beta})\mathrm{d}x\\
		&=\int_{0}^{\infty}\sum_{\Omega(k,n)}\prod_{j=1}^{k}\frac{\lambda_{j}^{x_{j}}}{x_{j}!}e^{-\Lambda x}x^{z_{k}}t^{-\beta}\sum_{m=0}^{\infty}\frac{(-xt^{-\beta})^{m}}{m!\Gamma(-\beta m+1-\beta)}\mathrm{d}x\\
		&=\sum_{\Omega(k,n)}\prod_{j=1}^{k}\frac{\lambda_{j}^{x_{j}}}{x_{j}!}\sum_{m=0}^{\infty}\frac{(-1)^{m}t^{-\beta-\beta m}}{m!\Gamma(-\beta m+1-\beta)}\int_{0}^{\infty}e^{-\Lambda x}x^{m+z_{k}}\mathrm{d}x\\
		&=\sum_{\Omega(k,n)}\prod_{j=1}^{k}\frac{\lambda_{j}^{x_{j}}}{x_{j}!}\sum_{m=0}^{\infty}\frac{(-1)^{m}t^{-\beta-\beta m}}{m!\Gamma(-\beta m+1-\beta)}\frac{\Gamma(m+z_{k}+1)}{\Lambda^{m+z_{k}+1}}\\
		&=\sum_{\Omega(k,n)}\prod_{j=1}^{k}\frac{\lambda_{j}^{x_{j}}}{x_{j}!}\frac{t^{-\beta}}{\Lambda^{z_{k}+1}}\sum_{m=0}^{\infty}\frac{(-t^{\beta}\Lambda)^{-m}}{\Gamma(-\beta m+1-\beta)}(m+z_{k})(m+z_{k}-1)\cdots (m+1)\\
		&=\sum_{\Omega(k,n)}\prod_{j=1}^{k}\frac{\lambda_{j}^{x_{j}}}{x_{j}!}\frac{t^{-\beta}}{\Lambda^{z_{k}+1}}\frac{\mathrm{d}^{z_{k}}}{\mathrm{d}s^{z_{k}}}\bigg[\sum_{m=0}^{\infty}s^{m+z_{k}}\frac{(-t^{\beta}\Lambda)^{-m}}{\Gamma(-\beta m+1-\beta)}\bigg]_{s=1}\\
		&=\sum_{\Omega(k,n)}\prod_{j=1}^{k}\frac{\lambda_{j}^{x_{j}}}{x_{j}!}\frac{\mathrm{d}^{z_{k}}}{\mathrm{d}s^{z_{k}}}\left[s^{z_{k}}E_{-\beta,1-\beta}\left(-\frac{s}{\Lambda t^{\beta}}\right)\right]_{s=1}.
	\end{align*}
	Finally, the proof follows on using \eqref{mittag}.
\end{proof}
\begin{remark}
	On substituting $k=1$ in Proposition \ref{alternate}, we get an equivalent expression for the pmf of TFPP (see Beghin and Orsingher (2009), p. 1801). For $\lambda_{j}=\lambda$, $1\le j\le k$, we get the following alternate version of the pmf of time fractional Poisson process of order $k$:
	\begin{equation*}
		p^{\beta}(n,t)\big|_{\lambda_{j}=\lambda}=\sum_{\Omega(k,n)}\prod_{j=1}^{k}\frac{(1/k)^{x_{j}}}{x_{j}!}\frac{\mathrm{d}^{z_{k}}}{\mathrm{d}s^{z_{k}}}\left[s^{z_{k}-1}E_{\beta,1}\left(-\frac{k\lambda t^{\beta}}{s}\right)\right]_{s=1}.		
	\end{equation*}
	Similarly, we can obtain alternate expressions for the pmfs of other particular and limiting cases of the GFCP, for example, the fractional P\'olya-Aeppli process of order $k$, the fractional Bell-Touchard process, the fractional Poisson-logarthmic process, the convoluted fractional Poisson process, {\it etc}.
\end{remark}
\begin{remark}
	If $X_{1}, X_{2},\dots, X_{n}$ are iid random variables with distribution $F(x)=\mathrm{Pr}\{X<x\}$, then the following holds:
\small	\begin{align*}
		\mathrm{Pr}\Big\{\max_{1\le j\le M^{\beta}(t) }X_{j}<x\Big\}&=\sum_{n=0}^{\infty}(\mathrm{Pr}\{X<x\})^{n}\mathrm{Pr}\{M^{\beta}(t)=n\}\\
		&=\int_{0}^{\infty}G(F(x),s)\mathrm{Pr}\{T_{2\beta}(t)\in \mathrm{d}s\},\ (\text{using}\ \eqref{pgfmt}\ \text{and}\  \eqref{mtb})\\
		&=\int_{0}^{\infty}\exp\bigg(-s\sum_{j=1}^{k}\lambda_{j}(1-F^{j}(x))\bigg)t^{-\beta}W_{-\beta,1-\beta}(-st^{-\beta})\mathrm{d}s,\ (\text{using}\ \eqref{ub}) \\
		&=E_{\beta,1}\bigg(-\sum_{j=1}^{k}\lambda_{j}(1-F^{j}(x))t^{\beta}\bigg),
	\end{align*}
\normalsize	where in the last step we have used Eq. (2.13) of Beghin and Orsingher (2010). Similarly, it can be shown that 
	\begin{equation*}
		\mathrm{Pr}\Big\{\min_{1\le j\le M^{\beta}(t) }X_{j}>x\Big\}	=E_{\beta,1}\bigg(-\sum_{j=1}^{k}\lambda_{j}\left(1-(1-F(x))^{j}\right)t^{\beta}\bigg).
	\end{equation*}
\end{remark}

Next, we introduce a non-homogeneous version of the GCP. Let $\{\mathcal{M}(t)\}_{t\ge0}$ be a counting process with deterministic and
time-dependent intensity functions $\lambda_{j}(t):[0,\infty)\to [0,\infty)$, $1\le j \le k$ such that $\mathcal{M}(0)=0$. We call it the non-homogeneous generalized counting process (NGCP) if it has independent increments and its transition probabilities are given by 
		\begin{equation*}
			\mathrm{Pr}\{\mathcal{M}(t+h)=n|\mathcal{M}(t)=m\}=\begin{cases*}
				1-\sum_{j=1}^{k}\lambda_{j}(t) h+o(h),\ n=m,\\
				\lambda_{j}(t)h+o(h), \ n=m+j, \ j=1,2,\dots,k,\\
				o(h), \ n>m+k,
			\end{cases*}
		\end{equation*}
		where $o(h)\to 0$ as $h\to 0$. Thus, the system of differential equations that governs its state probabilities $q_{n}(t)=\mathrm{Pr}\{\mathcal{M}(t)=n\}$ are given by
\begin{equation}\label{model}
	\frac{\mathrm{d}}{\mathrm{d}t}q_{n}(t)=- \sum_{j=1}^{k}\lambda_{j}(t)q_{n}(t)+	\sum_{j=1}^{n\wedge k}\lambda_{j}(t)q_{n-j}(t), \ n\ge0,
\end{equation}
with the initial conditions
\begin{equation*}
	q_{n}(0)=\begin{cases}
		1, \ n=0,\\
		0, \ n\ge1.
	\end{cases}
\end{equation*}

For $0\le s<t$ and $j\in \{1,2,\dots,k\}$, let $\Lambda_{j}(t)=\int_{0}^{t}\lambda_{j}(u)\mathrm{d}u<\infty$ be the cumulative rate function and $\Lambda_{j}(s,t)=\int_{s}^{t}\lambda_{j}(u)\mathrm{d}u=\Lambda_{j}(t)-\Lambda_{j}(s)$. 
On using \eqref{model}, the pgf of NGCP can be obtained as
\begin{equation*}
	\mathbb{E}\left(u^{\mathcal{M}(t)}\right)=\exp\bigg(\sum_{j=1}^{k}\Lambda_{j}(t)(u^{j}-1)\bigg).
\end{equation*}
 \begin{remark}
	The NGCP reduces to the GCP for	constant rates, that is, $\lambda_{j}(t)=\lambda_{j}$,  $t\ge0$, $1\le j\le k$.
For $k=1$, it reduces to the non-homogeneous Poisson process. For $\lambda_{j}(t)=\lambda(t)$ and $\lambda_{j}(t)=(1-\rho)\rho^{j-1}\lambda(t)/(1-\rho^{k})$, $1\le j\le k$  it reduces to the  non-homogeneous Poisson process of order $k$ and the non-homogeneous P\'olya-Aeppli process of order $k$, respectively (see Kadankova {\it et al.} (2021)). By letting $k\to \infty$ in (\ref{model}) and taking $\lambda_{j}(t)= \lambda(t)(1-\rho)\rho^{j-1}$, $j\ge 1$ the system (\ref{model})
reduces to the governing system of differential equations for the state probabilities
of non-homogeneous P\'olya-Aeppli process (see Chukova and Minkova (2019)).  Chukova and Minkova (2019) showed an application of  non-homogeneous P\'olya-Aeppli process in risk theory, along the similar lines it can be shown that the NGCP has application in risk theory.
\end{remark}
The increment process of NGCP for $v\ge 0$ is defined as 
\begin{equation*}
	I(t,v)\coloneqq\mathcal{M}\left(t+v\right)-\mathcal{M}(v), \  t\ge0.
\end{equation*}
Its marginal distribution $	q_{n}(t,v)=\mathrm{Pr}\{I(t,v)=n\}$ is given by
\begin{equation*}
	q_{n}(t,v)=\sum_{\Omega(k,n)}\prod_{j=1}^{k}\frac{(\Lambda_{j}(v,t+v))^{x_{j}}}{x_{j}!}e^{-\sum_{j=1}^{k}\Lambda_{j}(v,t+v)}, \ n\ge0
\end{equation*}
and its characteristic function is of the following form:
\begin{equation}\label{itvxi}
	\hat{q}_\xi(t,v)=	\exp\bigg(\sum_{j=1}^{k}\Lambda_{j}(v, t+v)(e^{\omega \xi j}-1)\bigg),\ \omega=\sqrt{-1},\ \xi\in \mathbb{R}.
\end{equation}

\subsection{NGCP time-changed by inverse stable subordinator}
For $0<\beta<1$, let $\{\mathcal{M}_{\beta}(t)\}_{t\ge0}$ be a time-changed NGCP defined as follows:
\begin{equation*}
	\mathcal{M}_{\beta}(t)\coloneqq\mathcal{M}(Y_{\beta}(t)),\  t\ge0,
\end{equation*}
where $\{\mathcal{M}(t)\}_{t\ge0}$ is independent of the inverse stable subordinator $\{Y_{\beta}(t)\}_{t\ge0}$. We call it the non-homogeneous generalized fractional counting process (NGFCP).
Its increment process is defined as
\begin{equation*}
	I_{\beta}(t,v)\coloneqq\mathcal{M}\left(Y_{\beta}(t)+v\right)-\mathcal{M}(v),\ t\ge0,
\end{equation*} 
whose  marginal distributions, that is,  $q^{\beta}_{n}(t,v)=\mathrm{Pr}\{I_{\beta}(t,v)=n\}$, $n\ge0$ are given by
\begin{align}\label{qna}
	q^{\beta}_{n}(t,v)&=\int_{0}^{\infty}q_{n}(u,v)h_{\beta}(t,u)\mathrm{d}u\\
	&=\int_{0}^{\infty}\sum_{\Omega(k,n)}\prod_{j=1}^{k}\frac{(\Lambda_{j}(v,u+v))^{x_{j}}}{x_{j}!}e^{-\sum_{j=1}^{k}\Lambda_{j}(v,u+v)}h_{\beta}(t,u)\mathrm{d}u.\nonumber
\end{align}
Here, $h_{\beta}(t,u)$ is the density of $\{Y_{\beta}(t)\}_{t\ge0}$. So, the marginal distributions of NGFCP are given by
\begin{align}\label{ngfcp}
q^{\beta}_{n}(t)=q^{\beta}_{n}(t,0)&=\int_{0}^{\infty}q_{n}(u,0)h_{\beta}(t,u)\mathrm{d}u\nonumber\\
&=\int_{0}^{\infty}\sum_{\Omega(k,n)}\prod_{j=1}^{k}\frac{(\Lambda_{j}(u))^{x_{j}}}{x_{j}!}e^{-\sum_{j=1}^{k}\Lambda_{j}(u)}h_{\beta}(t,u)\mathrm{d}u,
\end{align}
where we have used $\Lambda_{j}(0,u)=\Lambda_{j}(u)$.

 Let $\delta_{n,0}$ denote the
	Kronecker delta function.
\begin{theorem}\label{thm1}
The pmf of increment process of NGFCP satisfies the following system of fractional differential-integral equations:
	\begin{equation}\label{non}
		\frac{\mathrm{d}^{\beta}}{\mathrm{d}t^{\beta}}q^{\beta}_{n}(t,v)=\int_{0}^{\infty}\bigg(-\sum_{j=1}^{k}\lambda_{j}(u+v)q_{n}(u,v)+\sum_{j=1}^{n\wedge k}\lambda_{j}(u+v)q_{n-j}(u,v)\bigg)h_{\beta}(t,u)\mathrm{d}u,\ n\ge 0,
	\end{equation}
	with the initial condition $q^{\beta}_{n}(0,v)=\delta_{n,0}$.
\end{theorem}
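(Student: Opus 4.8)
The plan is to verify \eqref{non} by passing to the Laplace transform in the variable $t$ and matching both sides; uniqueness of the Laplace transform then yields the claim. The first step is to record the ordinary (non-fractional) governing equation satisfied by the increment pmf $q_{n}(u,v)$ of the NGCP as a function of $u$. Because the NGCP has independent increments, the increment process $I(u,v)$ is itself a time-shifted non-homogeneous GCP with effective intensities $\lambda_{j}(\cdot+v)$, and since $\frac{\partial}{\partial u}\Lambda_{j}(v,u+v)=\lambda_{j}(u+v)$, differentiating the explicit product formula for $q_{n}(u,v)$ (equivalently, reading off \eqref{model} for the increment process) gives
\begin{equation*}
	\frac{\partial}{\partial u}q_{n}(u,v)=-\sum_{j=1}^{k}\lambda_{j}(u+v)q_{n}(u,v)+\sum_{j=1}^{n\wedge k}\lambda_{j}(u+v)q_{n-j}(u,v),
\end{equation*}
with $q_{n}(0,v)=\delta_{n,0}$. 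The crucial observation is that the integrand on the right-hand side of \eqref{non} is exactly this $u$-derivative, so the right-hand side equals $\int_{0}^{\infty}\frac{\partial}{\partial u}q_{n}(u,v)\,h_{\beta}(t,u)\,\mathrm{d}u$.

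For the left-hand side, I would start from the subordination formula \eqref{qna}, interchange the order of integration (justified by Fubini, since all terms are nonnegative), and use the Laplace transform \eqref{lap} of the inverse-stable density to obtain $\tilde{q}^{\beta}_{n}(s,v)=s^{\beta-1}\hat{q}_{n}(s^{\beta},v)$, where $\hat{q}_{n}(\cdot,v)$ denotes the Laplace transform of $q_{n}(\cdot,v)$ in its first argument. Applying the Laplace transform \eqref{lc} of the Caputo derivative together with the initial value $q^{\beta}_{n}(0,v)=\delta_{n,0}$ then yields
\begin{equation*}
	\mathcal{L}\Big(\frac{\mathrm{d}^{\beta}}{\mathrm{d}t^{\beta}}q^{\beta}_{n}(t,v);s\Big)=s^{2\beta-1}\hat{q}_{n}(s^{\beta},v)-s^{\beta-1}\delta_{n,0}.
\end{equation*}

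For the right-hand side, I would again apply \eqref{lap} to write its Laplace transform as $s^{\beta-1}\int_{0}^{\infty}\frac{\partial}{\partial u}q_{n}(u,v)\,e^{-us^{\beta}}\,\mathrm{d}u$ and then integrate by parts in $u$. The boundary term at $u=0$ produces $-q_{n}(0,v)=-\delta_{n,0}$, the term at infinity vanishes, and the remaining integral reproduces $s^{\beta}\hat{q}_{n}(s^{\beta},v)$; multiplying by $s^{\beta-1}$ gives $s^{2\beta-1}\hat{q}_{n}(s^{\beta},v)-s^{\beta-1}\delta_{n,0}$, which coincides with the transform of the left-hand side. Inverting the Laplace transform then establishes \eqref{non}.

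The main obstacle I anticipate is not the algebra but the analytic bookkeeping: justifying the interchange of the $t$- and $u$-integrations, confirming the decay needed to discard the boundary term at infinity in the integration by parts, and confirming the initial condition $q^{\beta}_{n}(0,v)=\delta_{n,0}$, which follows because $Y_{\beta}(0)=0$ forces $h_{\beta}(0,\cdot)$ to act as a Dirac mass, so that $q^{\beta}_{n}(0,v)=q_{n}(0,v)=\delta_{n,0}$. Everything else reduces to the two Laplace-transform identities \eqref{lap} and \eqref{lc} together with a single integration by parts.
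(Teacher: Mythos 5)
Your proposal is correct, and it shares the paper's basic machinery: the subordination representation, the Laplace-transform identities \eqref{lap} and \eqref{lc}, a single integration by parts in $u$, and uniqueness of the Laplace transform. The genuine difference is the level at which the computation runs. The paper works with the characteristic function: it writes $\hat{q}_\xi^\beta(t,v)=\int_0^\infty \hat{q}_\xi(u,v)h_\beta(t,u)\,\mathrm{d}u$, takes the Laplace transform in $t$, integrates by parts using the explicit exponential form \eqref{itvxi} (whose $u$-derivative automatically produces the factor $\sum_{j=1}^{k}\lambda_j(u+v)(e^{\omega\xi j}-1)$), and then must finish with two inversions: first the Laplace transform, then the inversion formula for characteristic functions to recover the pmf-level statement. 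You instead work componentwise with the pmfs: you first record the Kolmogorov forward equation in $u$ for $q_n(u,v)$ --- the increment process is an NGCP with shifted rates $\lambda_j(\cdot+v)$, so $\partial_u q_n(u,v)$ is exactly the integrand of \eqref{non} --- and then match the Laplace transforms of the two sides of \eqref{non} directly, the initial condition $\delta_{n,0}$ appearing transparently as the boundary term of the integration by parts. Your route is more elementary in that it avoids the Fourier inversion step entirely; the paper's route avoids having to state and verify the forward equation for the increment pmf (the exponential formula encodes it) and treats all $n$ simultaneously. Two small points of bookkeeping: your claimed forward equation does follow from differentiating the product formula for $q_n(u,v)$ (or from \eqref{model} with shifted rates), so that step is sound; and your Fubini justification ``all terms are nonnegative'' is accurate for the subordination integral but not literally for the right-hand side of \eqref{non}, whose integrand is a difference of nonnegative terms --- apply Tonelli to each of the two sums separately. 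You flagged this yourself, and it is the same interchange the paper performs silently.
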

\begin{proof}
The characteristic function of $I_{\beta}(t,v)$ can be written as
\begin{equation}\label{xi}
	\hat{q}_\xi^\beta(t,v)=\int_{0}^{\infty}\hat{q}_{\xi}(u,v)h_{\beta}(t,u)\mathrm{d}u,
\end{equation}
where $\hat{q}_{\xi}(u,v)$ is the characteristic function of $I(t,v)$ given in (\ref{itvxi}).
On taking the  Laplace transform in \eqref{xi}, we get
	\begin{align*}
		\tilde{\hat{q}}_\xi^\beta(s,v)&=\int_{0}^{\infty}\hat{q}_{\xi}(u,v)\tilde{h}_{\beta}(s,u)\mathrm{d}u\\
		&=s^{\beta-1}\int_{0}^{\infty}\exp\bigg(\sum_{j=1}^{k}\Lambda_{j}(v, u+v)(e^{\omega \xi j}-1)\bigg)e^{-us^{\beta}}\mathrm{d}u,\ \ (\text{using (\ref{lap}) and (\ref{itvxi})})\\
	&=s^{\beta-1}\bigg\{\bigg[-\frac{e^{-us^{\beta}}}{s^{\beta}}\exp\bigg(\sum_{j=1}^{k}\Lambda_{j}(v, u+v)(e^{\omega \xi j}-1)\bigg)\bigg]_{0}^{\infty}\\
		&\ \ \ +\frac{1}{s^{\beta}}\int_{0}^{\infty}\sum_{j=1}^{k}\lambda_{j}(u+v)(e^{\omega \xi j}-1)\exp\bigg(\sum_{j=1}^{k}\Lambda_{j}(v, u+v)(e^{\omega \xi j}-1)\bigg)e^{-us^{\beta}}\mathrm{d}u\bigg\}.
	\end{align*}
	Thus,
	\begin{equation*}
		s^{\beta}\tilde{\hat{q}}_\xi^\beta(s,v)-s^{\beta-1}=\int_{0}^{\infty}\sum_{j=1}^{k}\lambda_{j}(u+v)(e^{\omega \xi j}-1)\exp\bigg(\sum_{j=1}^{k}\Lambda_{j}(v,u+v)(e^{\omega \xi j}-1)\bigg)s^{\beta-1}e^{-us^{\beta}}\mathrm{d}u.
	\end{equation*}
On taking the inverse Laplace transform, and using (\ref{lc}) and \eqref{lap}, we get
	\begin{align*}
		\frac{\mathrm{d}^{\beta}}{\mathrm{d}t^{\beta}}\hat{q}_\xi^\beta(t,v)&=\int_{0}^{\infty}\sum_{j=1}^{k}\lambda_{j}(u+v)(e^{\omega \xi j}-1)\exp\bigg(\sum_{j=1}^{k}\Lambda_{j}(v,u+v)(e^{\omega \xi j}-1)\bigg)h_{\beta}(t,u)\mathrm{d}u\\
		&=\int_{0}^{\infty}\sum_{j=1}^{k}\lambda_{j}(u+v)(e^{\omega \xi j}-1)	\hat{p}_\xi(u,v)h_{\beta}(t,u)\mathrm{d}u,
	\end{align*} 
where we have used (\ref{itvxi}) in the last step.	Finally, on using the inversion formula for the characteristic function, we get the required result.
\end{proof}

On substituting $v=0$ in Theorem \ref{thm1}, we get the system of differential equation that governs the pmf of NGFCP.
\begin{corollary}
	The pmf $q^{\beta}_{n}(t)$ of NGFCP satisfies the following system of fractional differential-integral equations:
	\begin{equation}\label{cor1}
	\frac{\mathrm{d}^{\beta}}{\mathrm{d}t^{\beta}}q^{\beta}_{n}(t)=\int_{0}^{\infty}\bigg(-\sum_{j=1}^{k}\lambda_{j}(u)q_{n}(u)+\sum_{j=1}^{n\wedge k}\lambda_{j}(u)q_{n-j}(u)\bigg)h_{\beta}(t,u)\mathrm{d}u,\ n\ge 0,
	\end{equation}
	with the initial condition $q^{\beta}_{n}(0)=\delta_{n,0}$.
\end{corollary}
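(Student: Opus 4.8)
The plan is to obtain \eqref{cor1} as the $v=0$ specialization of Theorem \ref{thm1}, since the NGFCP is precisely the increment process started at the origin. First I would record the identifications that make the substitution legitimate. Because $\mathcal{M}(0)=0$, the increment process at $v=0$ satisfies $I_\beta(t,0)=\mathcal{M}(Y_\beta(t))-\mathcal{M}(0)=\mathcal{M}_\beta(t)$, so its marginal law is $q^{\beta}_n(t,0)=q^{\beta}_n(t)$, matching \eqref{ngfcp}. Likewise $I(u,0)=\mathcal{M}(u)$ gives $q_n(u,0)=q_n(u)=\mathrm{Pr}\{\mathcal{M}(u)=n\}$, the cumulative rate simplifies via $\Lambda_j(0,u)=\Lambda_j(u)$, and the intensity is evaluated at $\lambda_j(u+0)=\lambda_j(u)$.

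Next I would substitute $v=0$ into the governing equation of Theorem \ref{thm1}. The left-hand side $\frac{\mathrm{d}^{\beta}}{\mathrm{d}t^{\beta}}q^{\beta}_n(t,v)$ becomes $\frac{\mathrm{d}^{\beta}}{\mathrm{d}t^{\beta}}q^{\beta}_n(t)$, and on the right-hand side the two sums collapse to $-\sum_{j=1}^{k}\lambda_j(u)q_n(u)+\sum_{j=1}^{n\wedge k}\lambda_j(u)q_{n-j}(u)$ integrated against the kernel $h_\beta(t,u)$, which is exactly the integrand in \eqref{cor1}. The initial condition $q^{\beta}_n(0,v)=\delta_{n,0}$ specializes to $q^{\beta}_n(0)=\delta_{n,0}$.

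There is no genuine obstacle here, as the statement is a direct corollary; the only point to verify is that the notational conventions for the $v=0$ case coincide, which the identities above confirm. If a self-contained argument is preferred over the substitution, I would instead mirror the proof of Theorem \ref{thm1}: starting from the characteristic function $\hat{q}_\xi^\beta(t)=\int_0^\infty \hat{q}_\xi(u)\,h_\beta(t,u)\,\mathrm{d}u$ with $\hat{q}_\xi(u)=\exp(\sum_{j=1}^{k}\Lambda_j(u)(e^{\omega\xi j}-1))$, take the Laplace transform in $t$, use \eqref{lap} to write $\tilde{h}_\beta(s,u)=s^{\beta-1}e^{-us^{\beta}}$, integrate by parts in $u$ exploiting $\frac{\partial}{\partial u}\Lambda_j(u)=\lambda_j(u)$, apply \eqref{lc} to identify the Caputo derivative, and finally invert the characteristic function. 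This reproduces \eqref{cor1} without invoking Theorem \ref{thm1}.
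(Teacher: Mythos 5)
Your proposal is correct and matches the paper's approach exactly: the paper obtains the corollary by substituting $v=0$ in Theorem \ref{thm1}, which is precisely your main argument, and your verification of the identifications $q^{\beta}_{n}(t,0)=q^{\beta}_{n}(t)$, $q_{n}(u,0)=q_{n}(u)$ and $\Lambda_{j}(0,u)=\Lambda_{j}(u)$ makes the substitution fully rigorous. The alternative self-contained argument you sketch is just the proof of Theorem \ref{thm1} specialized to $v=0$, so it adds nothing beyond the substitution route.
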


\begin{remark}
	 On choosing constant rates, that is, $\lambda_{j}(t)=\lambda_{j}$, for $1\le j \le k$ in \eqref{cor1}, we get
	\begin{align*}
	\frac{\mathrm{d}^{\beta}}{\mathrm{d}t^{\beta}}p^{\beta}(n,t)&=\int_{0}^{\infty}\bigg(-\sum_{j=1}^{k}\lambda_{j}p(n,u)+\sum_{j=1}^{n\wedge k}\lambda_{j}p(n-j,u)\bigg)h_{\beta}(t,u)\mathrm{d}u,
	\end{align*}
	which reduces to \eqref{cre}. Similarly, by taking constant rates in \eqref{ngfcp}, we get
	\begin{equation*}
	p^{\beta}(n,t)=\int_{0}^{\infty}\sum_{\Omega(k,n)}\prod_{j=1}^{k}\frac{(\lambda_{j}u)^{x_{j}}}{x_{j}!}e^{-\Lambda u}h_{\beta}(t,u)\mathrm{d}u,
	\end{equation*}
which coincides with the pmf of GFCP.

 On substituting $\lambda_{j}(t)=\lambda(t)$ and $\lambda_{j}(t)=(1-\rho)\rho^{j-1}\lambda(t)/(1-\rho^{k})$, $1\le j\le k$ in Theorem \ref{thm1}, we get the system of differential-integral equations that governs the pmf of corresponding increment process of the non-homogeneous fractional Poisson process of order $k$ and the non-homogeneous fractional P\'olya-Aeppli process of order $k$, respectively (see Kadankova {\it et al.} (2021), Theorem 3.3 and Theorem 4.4).
\end{remark}
\begin{remark}
	 For $\beta=1$, the Laplace transform of inverse stable subordinator reduces to $e^{-us}$ whose inverse Laplace transform is  $\delta(t-u)$, the Dirac's delta distribution. Hence, 
	\begin{equation*}
	\int_{0}^{\infty}q_{n}(u,v)\delta(t-u)\mathrm{d}u=q_{n}(t,v).
	\end{equation*}
From \eqref{qna}, it follows that $q_{n}^{1}(t,v)=q_{n}(t,v)$ and the system of differential-integral equations in \eqref{non} reduces to
	\begin{align*}
	\frac{\mathrm{d}}{\mathrm{d}t}q_{n}(t,v)&=\int_{0}^{\infty}\bigg(-\sum_{j=1}^{k}\lambda_{j}(u+v)q_{n}(u,v)+\sum_{j=1}^{n\wedge k}\lambda_{j}(u+v)q_{n-j}(u,v)\bigg)\delta(t-u)\mathrm{d}u\\
	&=-\sum_{j=1}^{k}\lambda_{j}(t+v)q_{n}(t,v)+\sum_{j=1}^{n\wedge k}\lambda_{j}(t+v)q_{n-j}(t,v).
	\end{align*}
\end{remark}

The mean, variance and covariance of NGCP are given by 
$
\mathbb{E}(\mathcal{M}(t))=\sum_{j=1}^{k}j\Lambda_{j}(t)$,
$\operatorname{Var}\left(\mathcal{M}(t)\right)=\sum_{j=1}^{k}j^{2}\Lambda_{j}(t)$ and $\operatorname{Cov}\left(\mathcal{M}(s),\mathcal{M}(t)\right)=\sum_{j=1}^{k}j^{2}\Lambda_{j}(s\wedge t)$, respectively.
\begin{proposition}\label{martingalen}
		The process $\{\mathcal{M}(t)-\sum_{j=1}^{k}j\Lambda_{j}(t)\}_{t\geq0}$ is a martingale with respect to a natural filtration $\mathscr{F}_{t}=\sigma\left(\mathcal{M}(s), 0<s\le t\right)$.
	\end{proposition}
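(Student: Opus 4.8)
The plan is to verify the three defining properties of a martingale---integrability, adaptedness, and the conditional-expectation (tower) identity---while leaning on the independent-increments structure of the NGCP, which is the only nontrivial input.

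First I would dispose of integrability and adaptedness. Since $\mathbb{E}(\mathcal{M}(t))=\sum_{j=1}^{k}j\Lambda_{j}(t)$ and each cumulative rate $\Lambda_{j}(t)$ is finite by hypothesis, the compensated process satisfies
\[
\mathbb{E}\Big|\mathcal{M}(t)-\sum_{j=1}^{k}j\Lambda_{j}(t)\Big|\le \mathbb{E}(\mathcal{M}(t))+\sum_{j=1}^{k}j\Lambda_{j}(t)=2\sum_{j=1}^{k}j\Lambda_{j}(t)<\infty,
\]
so it is integrable for each $t\ge0$; adaptedness is immediate because $\mathcal{M}(t)$ is $\mathscr{F}_{t}$-measurable and the subtracted term is deterministic.

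The core step is the conditional-expectation identity. I would fix $0\le s\le t$ and decompose $\mathcal{M}(t)=\mathcal{M}(s)+\big(\mathcal{M}(t)-\mathcal{M}(s)\big)$. Because $\mathcal{M}(s)$ is $\mathscr{F}_{s}$-measurable and the NGCP has independent increments by construction, the increment $\mathcal{M}(t)-\mathcal{M}(s)$ is independent of $\mathscr{F}_{s}$, whence $\mathbb{E}[\mathcal{M}(t)\mid\mathscr{F}_{s}]=\mathcal{M}(s)+\mathbb{E}\big(\mathcal{M}(t)-\mathcal{M}(s)\big)$. To evaluate the last term I would identify $\mathcal{M}(t)-\mathcal{M}(s)$ with the increment $I(t-s,s)$ and read off its mean from the characteristic function \eqref{itvxi} (equivalently, from the pgf) by differentiating at the origin, obtaining $\mathbb{E}\big(\mathcal{M}(t)-\mathcal{M}(s)\big)=\sum_{j=1}^{k}j\Lambda_{j}(s,t)=\sum_{j=1}^{k}j\big(\Lambda_{j}(t)-\Lambda_{j}(s)\big)$. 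Putting these together yields
\[
\mathbb{E}\Big[\mathcal{M}(t)-\sum_{j=1}^{k}j\Lambda_{j}(t)\ \Big|\ \mathscr{F}_{s}\Big]=\mathcal{M}(s)+\sum_{j=1}^{k}j\big(\Lambda_{j}(t)-\Lambda_{j}(s)\big)-\sum_{j=1}^{k}j\Lambda_{j}(t)=\mathcal{M}(s)-\sum_{j=1}^{k}j\Lambda_{j}(s),
\]
which is exactly the martingale identity.

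I do not anticipate a serious obstacle here; the computation is routine once independent increments are invoked. The only points requiring a little care are the justification that the increment is genuinely independent of the natural filtration---which follows directly from the independent-increments assumption built into the defining transition probabilities of the NGCP---and the correct extraction of the increment mean $\sum_{j=1}^{k}j\Lambda_{j}(s,t)$ from \eqref{itvxi}.
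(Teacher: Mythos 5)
Your proposal is correct and follows essentially the same route as the paper: both arguments hinge on the independent-increments property of the NGCP, compute $\mathbb{E}\big(\mathcal{M}(t)-\mathcal{M}(s)\mid\mathscr{F}_{s}\big)=\sum_{j=1}^{k}j\big(\Lambda_{j}(t)-\Lambda_{j}(s)\big)$, and conclude that the compensated increment has zero conditional expectation. The paper's proof is simply a condensed version of yours (it omits the explicit integrability and adaptedness checks and does not spell out reading the increment mean from the characteristic function), so there is nothing to revise.
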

	\begin{proof}
		Let $\bar{Q}(t)=\mathcal{M}(t)-\sum_{j=1}^{k}j\Lambda_{j}(t)$. As NGCP has independent increments, we have the following for $0<s\le t$:
		\begin{equation*}
		\mathbb{E}\left(\bar{Q}(t)-\bar{Q}(s)|\mathscr{F}_{s}\right)=	\mathbb{E}\left(\mathcal{M}(t)-\mathcal{M}(s)\big|\mathscr{F}_{s}\right)-\sum_{j=1}^{k}j\left(\Lambda_{j}(t)-\Lambda_{j}(s)\right)=0.
		\end{equation*}
		This completes the proof.
	\end{proof}

Next,  we discuss the arrival times of NGCP. Let $\mathscr{T}_{n}=\min\{t\ge 0:\mathcal{M}(t)=n\}$ be the arrival time of $n$th NGCP event. Then,
\begin{equation*}
F_{\mathscr{T}_{n}}(t)=\mathrm{Pr}\{\mathscr{T}_{n}\le t\}=\mathrm{Pr}\{\mathcal{M}(t)\ge n\}\\
=1-\sum_{x=0}^{n-1}\sum_{\Omega(k,x)}\prod_{j=1}^{k}\frac{(\Lambda_{j}(t))^{x_{j}}}{x_{j}!}e^{-\sum_{j=1}^{k}\Lambda_{j}(t)}.
\end{equation*}
We note that $F_{\mathscr{T}_{n}}(t)$ gives a distribution function if and only if $\Lambda_{j}(t)$'s satisfy the conditions given in Remark 5 of Leonenko {\it et al.} (2017).

Similarly, let $\mathscr{T}_{n}^{\beta}=\min\{t\ge 0:\mathcal{M}_{\beta}(t)=n\}$ be the arrival time of $n$th NGFCP event. On using (\ref{ngfcp}), its distribution can be written as
\begin{align*}
F_{\mathscr{T}_{n}^{\beta}}(t)
&=\sum_{x=n}^{\infty}\int_{0}^{\infty}\sum_{\Omega(k,x)}\prod_{j=1}^{k}\frac{(\Lambda_{j}(u))^{x_{j}}}{x_{j}!}e^{-\sum_{j=1}^{k}\Lambda_{j}(u)}h_{\beta}(t,u)\mathrm{d}u\\
&=\int_{0}^{\infty}\sum_{x=n}^{\infty}\sum_{\Omega(k,x)}\prod_{j=1}^{k}\frac{(\Lambda_{j}(u))^{x_{j}}}{x_{j}!}e^{-\sum_{j=1}^{k}\Lambda_{j}(u)}h_{\beta}(t,u)\mathrm{d}u\\
&=\int_{0}^{\infty}\bigg(1-\sum_{x=0}^{n-1}\sum_{\Omega(k,x)}\prod_{j=1}^{k}\frac{(\Lambda_{j}(u))^{x_{j}}}{x_{j}!}e^{-\sum_{j=1}^{k}\Lambda_{j}(u)}\bigg)h_{\beta}(t,u)\mathrm{d}u\\
&=\int_{0}^{\infty}F_{\mathscr{T}_{n}}(u)h_{\beta}(t,u)\mathrm{d}u.
\end{align*}

\subsection{NGCP time-changed by inverse subordinator}
Let $\{Y_{f}(t)\}_{t\ge0}$ be the inverse subordinator  defined in \eqref{Yf} such that $c_{1}=c_{2}=0$ in its  associated Bern\v stein function given in (\ref{fx}). We consider a time-changed process defined as follows:
\begin{equation*}
\mathcal{M}_{f}(t)\coloneqq\mathcal{M}(Y_{f}(t)), \ t\ge0, 
\end{equation*}
where the NGCP $\{\mathcal{M}(t)\}_{t\ge0}$ is independent of $\{Y_{f}(t)\}_{t\ge0}$.

For $f(s)=s^{\beta}$, $0<\beta<1$ the process $\{\mathcal{M}_{f}(t)\}_{t\ge0}$ reduces to the NGFCP. For $k=1$, it reduces to the non-homogeneous Poisson process time-changed by inverse subordinator that was introduced and studied by Buchak and Sakhno (2019).  

Let $v\ge 0$. The increment process of $\{\mathcal{M}_{f}(t)\}_{t\ge0}$ is defined as 
\begin{equation*}
I_{f}(t,v)=I(Y_{f}(t),v)=\mathcal{M}\left(Y_{f}(t)+v\right)-\mathcal{M}(v),  \ t\ge0.
\end{equation*} 
Its marginal distribution $q_n^f(t,v)=\mathrm{Pr}\left\{I_{f}(t,v)=n\right\}$, $n\ge0$ is given by
\begin{align*}
	q_n^f(t,v)&=\int_{0}^{\infty}q_n(u,v)h_{f}(t,u)\mathrm{d}u\\
	&=\int_{0}^{\infty}\sum_{\Omega(k,n)}\prod_{j=1}^{k}\frac{(\Lambda_{j}(v,u+v))^{x_{j}}}{x_{j}!}e^{-\sum_{j=1}^{k}\Lambda_{j}(v,u+v)}h_{f}(t,u)\mathrm{d}u,
\end{align*}
where $h_{f}(t,u)$ is the density of $\{Y_{f}(t)\}_{t\ge0}$, which exists under Condition {\bf I}.
So, the marginal distribution of $\{\mathcal{M}_{f}(t)\}_{t\ge0}$, that is, $q_n^f(t)=q_n^f(t,0)=\mathrm{Pr}\{\mathcal{M}_{f}(t)=n\}$  is given by
\begin{align*}
	q_n^f(t)&=\int_{0}^{\infty}q_n(u)h_{f}(t,u)\mathrm{d}u\\
	&=\int_{0}^{\infty}\sum_{\Omega(k,n)}\prod_{j=1}^{k}\frac{(\Lambda_{j}(u))^{x_{j}}}{x_{j}!}e^{-\sum_{j=1}^{k}\Lambda_{j}(u)}h_{f}(t,u)\mathrm{d}u.
\end{align*}

\begin{theorem}\label{3.3}
Let $^f\mathcal{D}_t$ be the generalized Caputo derivative with respect to the Bern\v{s}tein function $f$ defined in \eqref{fDt}, and let Condition {\bf I}  hold. Then, the pmf of $\{I_{f}(t,v)\}_{t\ge0}$ satisfies the following system of differential-integral equations:
	\begin{equation*}
	^f\mathcal{D}_tq_n^f(t,v)=\int_{0}^{\infty}\bigg(-\sum_{j=1}^{k}\lambda_{j}(u+v)q_n(u,v)+\sum_{j=1}^{n\wedge k}\lambda_{j}(u+v)q_{n-j}(u,v)\bigg)h_{f}(t,u)\mathrm{d}u,
	\end{equation*}
	with initial condition
	$q^{f}_{n}(0,v)=\delta_{n,0}$.
\end{theorem}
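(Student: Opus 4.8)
The plan is to follow exactly the characteristic-function scheme used in the proof of Theorem \ref{thm1}, simply replacing the stable-subordinator identities by their general-Bern\v{s}tein-function counterparts \eqref{lhft} and \eqref{LfDt}. Writing $\hat{q}_\xi^f(t,v)$ for the characteristic function of $I_f(t,v)$ and $\hat{q}_{\xi}(u,v)$ for that of $I(t,v)$ in \eqref{itvxi}, conditioning on $Y_f(t)$ gives $\hat{q}_\xi^f(t,v)=\int_{0}^{\infty}\hat{q}_{\xi}(u,v)h_{f}(t,u)\,\mathrm{d}u$. First I would take the Laplace transform in $t$ and invoke \eqref{lhft}, so that
\begin{equation*}
	\tilde{\hat{q}}_\xi^f(s,v)=\frac{f(s)}{s}\int_{0}^{\infty}\hat{q}_{\xi}(u,v)e^{-uf(s)}\,\mathrm{d}u.
\end{equation*}

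Next I would integrate by parts in $u$, using the key observation that
\begin{equation*}
	\frac{\partial}{\partial u}\hat{q}_{\xi}(u,v)=\sum_{j=1}^{k}\lambda_{j}(u+v)(e^{\omega \xi j}-1)\hat{q}_{\xi}(u,v),
\end{equation*}
which follows from \eqref{itvxi} and $\frac{\partial}{\partial u}\Lambda_{j}(v,u+v)=\lambda_{j}(u+v)$. Since $|\hat{q}_{\xi}(u,v)|\le 1$ and $e^{-uf(s)}\to 0$ as $u\to\infty$ (because $f(s)>0$), the boundary term at infinity vanishes, while at $u=0$ one has $\hat{q}_{\xi}(0,v)=1$ as $\Lambda_{j}(v,v)=0$. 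This produces the identity $\int_{0}^{\infty}\frac{\partial}{\partial u}\hat{q}_{\xi}(u,v)e^{-uf(s)}\,\mathrm{d}u=s\,\tilde{\hat{q}}_\xi^f(s,v)-1$.

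The decisive step is to recognise the resulting expression as the Laplace transform of a generalized Caputo derivative. Multiplying the last identity by $f(s)/s$ and applying \eqref{lhft} once more shows that the Laplace transform of the right-hand side of the asserted equation, expressed at the level of characteristic functions, equals $f(s)\tilde{\hat{q}}_\xi^f(s,v)-\frac{f(s)}{s}$. Since $\hat{q}_\xi^f(0,v)=1$, this is precisely $\mathcal{L}\big(^f\mathcal{D}_t\hat{q}_\xi^f(t,v);s\big)$ by \eqref{LfDt}. Inverting the Laplace transform therefore gives
\begin{equation*}
	^f\mathcal{D}_t\hat{q}_\xi^f(t,v)=\int_{0}^{\infty}\sum_{j=1}^{k}\lambda_{j}(u+v)(e^{\omega \xi j}-1)\hat{q}_{\xi}(u,v)h_{f}(t,u)\,\mathrm{d}u,
\end{equation*}
and finally, expanding $\hat{q}_{\xi}(u,v)=\sum_{n\ge0}e^{\omega\xi n}q_n(u,v)$ and matching the coefficient of $e^{\omega\xi n}$ on both sides yields the stated differential-integral system, the initial condition $q_n^f(0,v)=\delta_{n,0}$ coming from $Y_f(0)=0$.

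The step I expect to require the most care is the integration by parts together with the prefactor bookkeeping: one must verify that the $f(s)/s$ factor supplied by \eqref{lhft} combines with the boundary contribution at $u=0$ to reproduce exactly the $-\frac{f(s)}{s}u(0)$ term in \eqref{LfDt}, so that the identity aligns with the generalized \emph{Caputo} derivative rather than the Riemann--Liouville one related to it in \eqref{relation}. One also needs Condition \textbf{I} to guarantee that the density $h_f(t,u)$ exists and that \eqref{lhft} is valid, while the uniform bound $|\hat{q}_{\xi}(u,v)|\le1$ justifies interchanging the Laplace transform with the $u$-integral and discarding the boundary term at infinity.
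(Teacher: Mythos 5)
Your proposal is correct and follows essentially the same route as the paper's proof: write the characteristic function of $I_f(t,v)$ as $\int_0^\infty \hat{q}_\xi(u,v)h_f(t,u)\,\mathrm{d}u$, Laplace transform in $t$ via \eqref{lhft}, integrate by parts in $u$ (the paper performs the identical integration by parts, just displayed with the antiderivative $-e^{-uf(s)}/f(s)$ rather than your rearranged identity), identify $f(s)\tilde{\hat{q}}_\xi^f(s,v)-f(s)/s$ with the transform of the generalized Caputo derivative via \eqref{LfDt}, invert, and recover the state equations by the inversion formula for characteristic functions. Your bookkeeping of the boundary terms and the role of Condition \textbf{I} matches the paper's argument exactly.
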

\begin{proof}
	The proof follows similar lines to that of Theorem \ref{thm1}. We present it here for the sake of completeness.
	
	The characteristic function of $I_{f}(t,v)$ can be written as
	\begin{equation}\label{xif}
		\hat{q}_\xi^f(t,v)=\int_{0}^{\infty}\hat{q}_{\xi}(u,v)h_{f}(t,u)\mathrm{d}u.
	\end{equation}
	On taking the  Laplace transform in \eqref{xif}, we get
	\begin{align*}
		\tilde{\hat{q}}_\xi^f(s,v)&=\int_{0}^{\infty}\hat{q}_{\xi}(u,v)\tilde{h}_{f}(s,u)\mathrm{d}u\\
		&=\frac{f(s)}{s}\int_{0}^{\infty}\exp\bigg(\sum_{j=1}^{k}\Lambda_{j}(v, u+v)(e^{\omega \xi j}-1)\bigg)e^{-uf(s)}\mathrm{d}u,\ \ (\text{using \eqref{lhft} and (\ref{itvxi})})\\
		&=\frac{f(s)}{s}\Bigg\{\left[-\frac{e^{-uf(s)}}{f(s)}\exp\bigg(\sum_{j=1}^{k}\Lambda_{j}(v, u+v)(e^{\omega \xi j}-1)\bigg)\right]_{0}^{\infty}\\
		&\ \ \ +\frac{1}{f(s)}\int_{0}^{\infty}\sum_{j=1}^{k}\lambda_{j}(u+v)(e^{\omega \xi j}-1)\exp\bigg(\sum_{j=1}^{k}\Lambda_{j}(v, u+v)(e^{\omega \xi j}-1)\bigg)e^{-uf(s)}\mathrm{d}u\Bigg\}.
	\end{align*}
	Thus,
\small	\begin{equation*}
		f(s)\tilde{\hat{q}}_\xi^f(s,v)-\frac{f(s)}{s}=\int_{0}^{\infty}\sum_{j=1}^{k}\lambda_{j}(u+v)(e^{\omega \xi j}-1)\exp\bigg(\sum_{j=1}^{k}\Lambda_{j}(v,u+v)(e^{\omega \xi j}-1)\bigg)\frac{f(s)}{s}e^{-uf(s)}\mathrm{d}u.
	\end{equation*}
\normalsize	On taking the inverse Laplace transform, and using \eqref{lhft} and \eqref{LfDt}, we get
	\begin{align*}
		^f\mathcal{D}_t\hat{q}_\xi^f(t,v)&=\int_{0}^{\infty}\sum_{j=1}^{k}\lambda_{j}(u+v)(e^{\omega \xi j}-1)\exp\bigg(\sum_{j=1}^{k}\Lambda_{j}(v,u+v)(e^{\omega \xi j}-1)\bigg)h_{f}(t,u)\mathrm{d}u\\
		&=\int_{0}^{\infty}\sum_{j=1}^{k}\lambda_{j}(u+v)(e^{\omega \xi j}-1)	\hat{q}_\xi(u,v)h_{f}(t,u)\mathrm{d}u,
	\end{align*} 
	where we have used (\ref{itvxi}) in the last step.	Now, the desired result follows on using the inversion formula for the characteristic function.
\end{proof}

On substituting $v=0$ in Theorem \ref{3.3}, we get the system of differential-integral equations that governs the pmf of $\{\mathcal{M}_{f}(t)\}_{t\ge0}$.
\begin{corollary}
	Let Condition {\bf I}  hold. Then, the pmf of $\{\mathcal{M}_{f}(t)\}_{t\ge0}$ satisfy the following system of differential-integral equations:
	\begin{equation}\label{fdtlu}
^f\mathcal{D}_tq_n^f(t)=\int_{0}^{\infty}\bigg(-\sum_{j=1}^{k}\lambda_{j}(u)q_n(u)+\sum_{j=1}^{n\wedge k}\lambda_{j}(u)q_{n-j}(u)\bigg)h_{f}(t,u)\mathrm{d}u,\ n\ge0,
\end{equation}
	with initial condition $q^{f}_{n}(0)=\delta_{n,0}$.
\end{corollary}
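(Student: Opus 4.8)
The plan is to obtain this corollary as the special case $v=0$ of Theorem~\ref{3.3}, so essentially no new work is required beyond identifying the relevant specializations of the notation. First I would record that, because $\mathcal{M}(0)=0$, the increment process at $v=0$ collapses to the process itself: $I_{f}(t,0)=\mathcal{M}(Y_{f}(t)+0)-\mathcal{M}(0)=\mathcal{M}(Y_{f}(t))=\mathcal{M}_{f}(t)$. Consequently $q_n^f(t,0)=q_n^f(t)$, in agreement with the definition of the marginal distribution of $\{\mathcal{M}_f(t)\}_{t\ge0}$. The same reasoning applied to the underlying NGCP increment gives $I(u,0)=\mathcal{M}(u)$, whence $q_n(u,0)=q_n(u)$ and $q_{n-j}(u,0)=q_{n-j}(u)$.

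Next I would substitute $v=0$ directly into the differential-integral system of Theorem~\ref{3.3}. Under this substitution the time-shifted rates reduce to $\lambda_j(u+0)=\lambda_j(u)$, the left-hand side becomes $^f\mathcal{D}_t q_n^f(t)$, and, using the identities of the previous step, the integrand on the right becomes exactly $-\sum_{j=1}^{k}\lambda_{j}(u)q_n(u)+\sum_{j=1}^{n\wedge k}\lambda_{j}(u)q_{n-j}(u)$. This produces equation~\eqref{fdtlu}. The stated initial condition follows in the same way: the condition $q_n^f(0,v)=\delta_{n,0}$ of Theorem~\ref{3.3} specializes at $v=0$ to $q_n^f(0)=\delta_{n,0}$.

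Since every step is a routine specialization, I do not anticipate a substantive obstacle. Condition~{\bf I} is carried over unchanged from Theorem~\ref{3.3}, where it guarantees the existence of the density $h_f(t,u)$ through the Laplace transform~\eqref{lhft}, so the integral representation on the right of \eqref{fdtlu} is well defined. The only point that warrants an explicit line of justification is the collapse $I_f(t,0)=\mathcal{M}_f(t)$, which rests on $\mathcal{M}(0)=0$; once this is noted the corollary is immediate. Alternatively, for completeness one could reproduce the Laplace-transform computation of Theorem~\ref{3.3} with $v=0$ from the start, namely taking the Laplace transform of the characteristic function $\hat{q}_\xi^f(t,0)$, applying \eqref{lhft} and \eqref{LfDt}, and inverting; but this merely retraces the earlier argument and offers no additional information.
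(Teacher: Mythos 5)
Your proposal is correct and coincides with the paper's own argument: the paper derives this corollary precisely by substituting $v=0$ in Theorem \ref{3.3}, using that $\mathcal{M}(0)=0$ so the increment process collapses to $\{\mathcal{M}_f(t)\}_{t\ge0}$ itself. Your additional remarks (the role of Condition {\bf I} and the optional re-derivation via Laplace transforms) are sound but not needed beyond the specialization step.
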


		On choosing constant rates, that is, $\lambda_{j}(t)=\lambda_{j}$ for $1\le j \le k$ in \eqref{fdtlu}, we get 
		\begin{equation}\label{gfd}
			^f\mathcal{D}_tp_n^f(t)=-\Lambda p_n^f(t)+\sum_{j=1}^{n\wedge k}\lambda_{j}p_n^f(t),
		\end{equation}
	where 	$p_n^f(t)=\mathrm{Pr}\left\{M\left(Y_f(t)\right)=n\right\}$ are the state probabilities of $\{M\left(Y_{f}(t)\right)\}_{t\ge0}$, a process studied in  Kataria and Khandakar (2022a).

Alternatively, the system of differential equations \eqref{gfd} can be obtained as follows:

Note that	
	\begin{equation}\label{ghf}
p_n^f(t)=\int_{0}^{\infty}p(n,u)h_{f}(t,u)\mathrm{d}u, \ n\ge0.
	\end{equation}	
On taking the generalized Riemann-Liouville derivative $^f\mathscr{D}_t$
and using the following results (see Toaldo (2015), Theorem 4.1):
\begin{equation*}
	^f\mathscr{D}_th_{f}(t,u)=-\frac{\partial}{\partial u}h_{f}(t,u), \ h_{f}(t,0)=\nu(t),\ h_{f}(0,u)=\delta(u)
\end{equation*}
in \eqref{ghf}, we get
	\begin{align}\label{hgfd}
		^f\mathscr{D}_t	p_n^f(t)&=-\int_{0}^{\infty}p(n,u)\frac{\partial}{\partial u}h_{f}(t,u)\mathrm{d}u\nonumber\\
		&=-\left[p(n,u)h_{f}(t,u)\right]_{u=0}^{\infty}+\int_{0}^{\infty}h_{f}(t,u)\frac{\mathrm{d}}{\mathrm{d}u}p(n,u)\mathrm{d}u\nonumber\\
		&=p(n,0)h_{f}(t,0)+\int_{0}^{\infty}h_{f}(t,u)\Big(-\Lambda p(n,u)+	\sum_{j=1}^{n\wedge k}\lambda_{j}p(n-j,u)\Big)\mathrm{d}u\nonumber\\
		&=p(n,0)\nu(t)-\Lambda p_n^f(t)+\sum_{j=1}^{n\wedge k}\lambda_{j}p_{n-j}^f(t),
	\end{align}
where in the penultimate step we have used \eqref{cre} with $\beta=1$. Also,
\begin{equation*}
	p_n^f(0)=\int_{0}^{\infty}p(n,u)h_{f}(0,u)\mathrm{d}u=\int_{0}^{\infty}p(n,u)\delta(u)\mathrm{d}u=p(n,0).
\end{equation*}
From \eqref{relation}, we get
	\begin{equation*}
^f\mathcal{D}_tp_n^f(t)=^f\mathscr{D}_tp_n^f(t)-\nu(t)p_n^f(0),
	\end{equation*}
which on using in \eqref{hgfd} gives the required result.
\section{GCP time-changed by multistable subordinator}\label{section4}

Let $\{H(t)\}_{t\ge0}$ be a multistable subordinator with stability index $\beta(t)\in (0,1)$. Here, we consider the following time-changed process:
\begin{equation}\label{gsmcp}
X(t)\coloneqq M(H(t)), \ t\ge0,
\end{equation}
where the GCP $\{M(t)\}_{t\ge0}$ is independent of $\{H(t)\}_{t\ge0}$.
We call it the generalized space-multifractional counting process (GSMCP). 

For $k=1$, the GCP reduces to the Poisson process. So, the GSMCP reduces to a special process introduced and studied by Beghin and Riccuti (2019), namely, the space-multifractional Poisson process. Using Proposition 2.1 of Beghin and Riccuti (2019), it follows that the GSMCP is a non-homogeneous subordinator.

Next, we obtain the L\'evy measure of GSMCP. Let $\nu_{t}(\cdot)$  and $\nu_{t}^{*}(\cdot)$ denote the L\'evy measure of $\{H(t)\}_{t\ge0}$ and $\{X(t)\}_{t\ge0}$, respectively. Again, it follows from Eq. (2.1) of  Beghin and Riccuti (2019) that 
\begin{align}\label{levymeasure}
\nu_{t}^{*}(\mathrm{d}x)&=\int_{0}^{\infty}\mathrm{Pr}\{M(s)\in \mathrm{d}x\}\nu_{t}(\mathrm{d}s)\nonumber\\
&=\sum_{n=1}^{\infty}\sum_{\Omega(k,n)}\prod_{j=1}^{k}\frac{\lambda_{j}^{x_{j}}}{x_{j}!}\delta_{n}(\mathrm{d}x)\frac{\beta(t)}{\Gamma(1-\beta(t))}\int_{0}^{\infty}e^{-\Lambda s}s^{z_{k}-\beta(t)-1}\mathrm{d}s,\ \ (\text{using} \ \eqref{nut} \ \text{and}\ \eqref{p(n,t)})\nonumber\\
&=\sum_{n=1}^{\infty}\sum_{\Omega(k,n)}\prod_{j=1}^{k}\frac{\lambda_{j}^{x_{j}}}{x_{j}!}\frac{\beta(t)}{\Gamma(1-\beta(t))}\frac{\Gamma(z_{k}-\beta(t))}{\Lambda^{z_{k}-\beta(t)}}\delta_{n}(\mathrm{d}x)\nonumber\\
&=\sum_{n=1}^{\infty}\sum_{\Omega(k,n)}\prod_{j=1}^{k}\frac{\lambda_{j}^{x_{j}}}{x_{j}!}\frac{z_{k}!(-1)^{z_{k}+1}}{\Lambda^{z_{k}-\beta(t)}}\binom{\beta(t)}{z_{k}}\delta_{n}(\mathrm{d}x),
\end{align}
where $\delta_{n}(\cdot)$ is the Dirac delta centered at $n$.
\begin{remark}
	For $k=1$ and $\Lambda=\lambda_{1}=\lambda$ (say), the L\'evy measure (\ref{levymeasure}) reduces to
	\begin{equation*}
	\nu_{t}^{*}(\mathrm{d}x)\big|_{k=1}=\lambda^{\beta(t)}\sum_{n=1}^{\infty}(-1)^{n+1}\binom{\beta(t)}{n}\delta_{n}(\mathrm{d}x),
	\end{equation*}
	which agrees with the L\'evy measure of space-multifractional Poisson process (see Beghin and Riccuti (2019), Eq. (3.1)).
\end{remark}
On using \eqref{bernstein} and \eqref{levymeasure}, the Bern\v{s}tein function $f^{*}(u,t)$ of GSMCP can be obtained  as follows:
\begin{align*}
	f^{*}(u,t)&=\int_{0}^{\infty}(1-e^{-ux})\nu_{t}^{*}(\mathrm{d}x)\\
	&=\sum_{n=1}^{\infty}\sum_{\Omega(k,n)}\prod_{j=1}^{k}\frac{\lambda_{j}^{x_{j}}}{x_{j}!}\frac{z_{k}!(-1)^{z_{k}+1}}{\Lambda^{z_{k}-\beta(t)}}\binom{\beta(t)}{z_{k}}\int_{0}^{\infty}(1-e^{-ux})\delta_{n}(\mathrm{d}x)\\
	&=\sum_{n=1}^{\infty}\sum_{\Omega(k,n)}\prod_{j=1}^{k}\frac{\lambda_{j}^{x_{j}}}{x_{j}!}\frac{z_{k}!(-1)^{z_{k}+1}}{\Lambda^{z_{k}-\beta(t)}}\binom{\beta(t)}{z_{k}}(1-e^{-un}).
\end{align*}

The next result gives the distribution of first passage times of GSMCP. 
\begin{proposition}
	Let $T_{m}$ be the time of the first upcrossing of the level $m$, {\it i.e.},
	\begin{equation*}
	T_{m}=\inf\{s\ge0:X(s)\ge m\}.
	\end{equation*}
	Then,
	\begin{equation}\label{tm}
	\mathrm{Pr}\{T_{m}>t\}=	\sum_{n=0}^{m-1}\sum_{\Omega(k,n)}\prod_{j=1}^{k}\frac{(-\lambda_{j})^{x_{j}}}{x_{j}!}\frac{\mathrm{d}^{z_{k}}}{\mathrm{d}\Lambda^{z_{k}}}e^{-\int_{0}^{t}\Lambda^{\alpha(\tau)}\mathrm{d}\tau}.
	\end{equation}
\end{proposition}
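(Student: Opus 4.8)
The plan is to reduce the first-passage probability to the one-dimensional distribution of $X$ and then to read off that distribution from the GCP pmf \eqref{p(n,t)} together with the Laplace transform of the multistable subordinator.

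First I would use that $X(t)=M(H(t))$ defined in \eqref{gsmcp} is a non-decreasing, integer-valued process started at $0$ (indeed it is a non-homogeneous subordinator, as noted right after \eqref{gsmcp}). Hence the level $m$ has not been upcrossed by time $t$ precisely when $X(t)\le m-1$, so that
\[
\mathrm{Pr}\{T_m>t\}=\mathrm{Pr}\{X(t)\le m-1\}=\sum_{n=0}^{m-1}\mathrm{Pr}\{X(t)=n\}.
\]
Conditioning on $H(t)$ and using the independence of $\{M(t)\}_{t\ge0}$ and $\{H(t)\}_{t\ge0}$ gives
\[
\mathrm{Pr}\{X(t)=n\}=\int_0^\infty p(n,s)\,\mathrm{Pr}\{H(t)\in \mathrm{d}s\},
\]
where $p(n,s)$ is the GCP pmf in \eqref{p(n,t)}.

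Next I would exploit the differentiation device underlying \eqref{bgd43}. Writing \eqref{p(n,t)} as $p(n,s)=\sum_{\Omega(k,n)}\big(\prod_{j=1}^k \lambda_j^{x_j}/x_j!\big)\,s^{z_k}e^{-\Lambda s}$ and regarding $\Lambda$ as a scalar variable, I use the identity $s^{z_k}e^{-\Lambda s}=(-1)^{z_k}\frac{\mathrm{d}^{z_k}}{\mathrm{d}\Lambda^{z_k}}e^{-\Lambda s}$ together with $(-1)^{z_k}\prod_{j}\lambda_j^{x_j}=\prod_j(-\lambda_j)^{x_j}$, valid since $z_k=x_1+\cdots+x_k$. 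This recasts the pmf as
\[
p(n,s)=\sum_{\Omega(k,n)}\prod_{j=1}^k\frac{(-\lambda_j)^{x_j}}{x_j!}\,\frac{\mathrm{d}^{z_k}}{\mathrm{d}\Lambda^{z_k}}e^{-\Lambda s}.
\]
Substituting into the integral and pulling the finite sum and the $\Lambda$-derivative outside, the remaining integral is the Laplace transform of $H(t)$ evaluated at $\Lambda$, namely $\int_0^\infty e^{-\Lambda s}\mathrm{Pr}\{H(t)\in\mathrm{d}s\}=\mathbb{E}\big(e^{-\Lambda H(t)}\big)=\exp\big(-\int_0^t \Lambda^{\beta(\tau)}\,\mathrm{d}\tau\big)$, which follows from the associated Bern\v stein function $f(u,t)=u^{\beta(t)}$ of the multistable subordinator and is precisely the factor appearing on the right-hand side of \eqref{tm}. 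Summing over $0\le n\le m-1$ then yields \eqref{tm}.

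The step needing care is the interchange of the operator $\frac{\mathrm{d}^{z_k}}{\mathrm{d}\Lambda^{z_k}}$ with the integral against $\mathrm{Pr}\{H(t)\in\mathrm{d}s\}$, together with the convention (identical to the one used to obtain \eqref{bgd43}) that $\Lambda$ is differentiated only through the exponential $e^{-\Lambda s}$ while the prefactors $\lambda_j^{x_j}$ are held fixed. I expect this to be the only delicate point. It is justified because, for every $\Lambda>0$, the integrand $e^{-\Lambda s}$ and each of its $\Lambda$-derivatives $(-s)^{j}e^{-\Lambda s}$ are bounded in $s\ge0$, hence integrable against the probability measure $\mathrm{Pr}\{H(t)\in\mathrm{d}s\}$, uniformly for $\Lambda$ bounded away from $0$; thus one may differentiate under the integral sign. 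In particular the heavy tails of $H(t)$ cause no difficulty, since the exponential weight $e^{-\Lambda s}$ is present for $\Lambda=\sum_{j}\lambda_j>0$. The reduction $\{T_m>t\}=\{X(t)\le m-1\}$ is the remaining ingredient and rests solely on the monotonicity of $X$.
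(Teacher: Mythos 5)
Your proof is correct and follows essentially the same route as the paper's: reduce $\{T_m>t\}$ to $\{X(t)\le m-1\}$ by monotonicity, condition on $H(t)$, rewrite $s^{z_k}e^{-\Lambda s}=(-1)^{z_k}\frac{\mathrm{d}^{z_k}}{\mathrm{d}\Lambda^{z_k}}e^{-\Lambda s}$, and identify the remaining integral as $\mathbb{E}\bigl(e^{-\Lambda H(t)}\bigr)=\exp\bigl(-\int_0^t\Lambda^{\beta(\tau)}\mathrm{d}\tau\bigr)$. Your added justification of differentiating under the integral sign (which the paper silently assumes) is a welcome refinement, and note that your exponent $\beta(\tau)$ is the correct one; the $\alpha(\tau)$ in the statement of \eqref{tm} is a typo.
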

\begin{proof}
On using $\mathrm{Pr}\{T_{m}>t\}=\mathrm{Pr}\{X(t)<m\}$, we get
	\begin{align*}
	\mathrm{Pr}\{T_{m}>t\}&=\sum_{n=0}^{m-1}\mathrm{Pr}\{X(t)=n\}\\
	&=\sum_{n=0}^{m-1}\int_{0}^{\infty}\mathrm{Pr}\{M(s)=n\}\mathrm{Pr}\{H(t)\in \mathrm{d}s\},\ (\text{using} \ \eqref{gsmcp})\\
	&=\sum_{n=0}^{m-1}\sum_{\Omega(k,n)}\prod_{j=1}^{k}\frac{\lambda_{j}^{x_{j}}}{x_{j}!}\int_{0}^{\infty}e^{-\Lambda s}s^{z_{k}}\mathrm{Pr}\{H(t)\in \mathrm{d}s\},\ (\text{using} \ \eqref{p(n,t)})\\
	&=\sum_{n=0}^{m-1}\sum_{\Omega(k,n)}\prod_{j=1}^{k}\frac{\lambda_{j}^{x_{j}}}{x_{j}!}(-1)^{z_{k}}\frac{\mathrm{d}^{z_{k}}}{\mathrm{d}\Lambda^{z_{k}}}\int_{0}^{\infty}e^{-\Lambda s}\mathrm{Pr}\{H(t)\in \mathrm{d}s\}\\
	&=\sum_{n=0}^{m-1}\sum_{\Omega(k,n)}\prod_{j=1}^{k}\frac{(-\lambda_{j})^{x_{j}}}{x_{j}!}\frac{\mathrm{d}^{z_{k}}}{\mathrm{d}\Lambda^{z_{k}}}e^{-\int_{0}^{t}\Lambda^{\beta(\tau)}\mathrm{d}\tau}.
	\end{align*}
	This completes the proof.
\end{proof}
\begin{remark}
	On substituting $k=1$ in \eqref{tm}, we get the corresponding result for space-multifractional Poisson process (see Beghin and Riccuti (2019), Section 3.3).
\end{remark}
\subsection{Generalized space fractional counting process}

The multistable subordinator $\{H(t)\}_{t\ge0}$ reduces to the stable subordinator $\{D_{\beta}(t)\}_{t\ge0}$ when the stability index is a constant, that is, $\beta(t)=\beta\in (0,1)$ for all $t\ge0$. In this case, the GSMCP reduces to the following time-changed process: 
\begin{equation}\label{md}
	Y(t)=M(D_{\beta}(t)),  \ t\ge0,
\end{equation} 
where the GCP is independent of the stable subordinator. We call the process $\{Y(t)\}_{t\ge0}$ as the generalized space fractional counting process (GSFCP).

When $k=1$, the GSFCP reduces to the space fractional Poisson process (see Orsingher and Polito (2012)). For $\lambda_{j}=\lambda$, $j=1,2,\dots,k$ it reduces to the space fractional Poisson process of order $k$ (see Gupta and Kumar (2021)). For $0\le \rho<1$ and $\lambda_{j}=\lambda(1-\rho)\rho^{j-1}/(1-\rho^{k})$,   $j=1,2,\dots,k$ it reduces to the space fractional version of the PAPoK.

Let $l_{\beta}(t,\cdot)$ be the density of stable subordinator and $G(u,t)$ be the pgf of GCP given in \eqref{pgfmt}. From \eqref{md}, the pgf of GSFCP can be obtained as
\begin{align}\label{pgfa}
G_{\beta}(u,t)&=\int_{0}^{\infty}G(u,x)l_{\beta}(t,x)\mathrm{d}x\nonumber\\
&=\int_{0}^{\infty}\exp\Big(-\sum_{j=1}^{k}\lambda_{j}(1-u^{j})x\Big)l_{\beta}(t,x)\mathrm{d}x\nonumber\\
&=\exp\bigg(-t\Big(\sum_{j=1}^{k}\lambda_{j}(1-u^{j})\Big)^{\beta}\bigg).
\end{align}
\begin{remark}\label{fs}
	Alternatively, the pgf \eqref{pgfa} can be obtained by choosing the Bern\v stein funstion $f(s)=f_{2}(s)=s^{\beta}$ in Proposition 6 of Kataria and Khandakar (2022a).
	
	On substituting $\lambda_{j}=\lambda$ for all $j=1,2,\dots,k$ in \eqref{pgfa}, we get the pgf of space fractional Poisson process of order $k$ (see Gupta and Kumar (2021), Eq. (34)).
\end{remark}
The pgf of GSFCP satisfies the following differential equation:
\begin{equation*}
\frac{\partial}{\partial t}G_{\beta}(u,t)=-\bigg(\sum_{j=1}^{k}\lambda_{j}(1-u^{j})\bigg)^{\beta}G_{\beta}(u,t), \ G_{\beta}(u,0)=1.
\end{equation*}
\begin{proposition}
	Let $X_{i}$, $i\ge1$ be iid uniform random variables in $[0,1]$. Then,
	\begin{equation*}
		G_{\beta}(u,t)=\mathrm{Pr}\bigg\{\min_{0\le i \le N(t)} X_{i}^{1/\beta}\ge 1-\frac{1}{\Lambda}\sum_{j=1}^{k}\lambda_{j}u^{j}\bigg\},\ 0<u<1,
	\end{equation*}
	where $\{N(t)\}_{t\ge0}$ is a homogeneous Poisson process with rate $\Lambda^{\beta}$ such that $\min_{0\le i \le N(t)} X_{i}^{1/\beta}= 1$ when $N(t)=0$.
\end{proposition}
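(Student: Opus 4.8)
The plan is to condition on the value of the rate-$\Lambda^{\beta}$ Poisson process $N(t)$ and to exploit the independence of the uniform variables $X_{i}$. First I would introduce the threshold $a\coloneqq 1-\frac{1}{\Lambda}\sum_{j=1}^{k}\lambda_{j}u^{j}$ and record the identity $\Lambda a=\sum_{j=1}^{k}\lambda_{j}(1-u^{j})$, which is immediate from $\sum_{j=1}^{k}\lambda_{j}=\Lambda$. Since $0<u<1$ forces each $1-u^{j}\in(0,1)$, we obtain $a\in(0,1)$; this is exactly what is needed so that the quantity $a^{\beta}$ appearing below is a legitimate probability.

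Next I would evaluate the conditional probability of the event $\{\min_{i}X_{i}^{1/\beta}\ge a\}$ given $N(t)=n$. Because the $X_{i}$ are iid uniform on $[0,1]$ and independent of $N(t)$, and because $X_{1}^{1/\beta}\ge a\iff X_{1}\ge a^{\beta}$, each factor contributes $\mathrm{Pr}\{X_{1}\ge a^{\beta}\}=1-a^{\beta}$, so the conditional probability equals $(1-a^{\beta})^{n}$. The stated convention that the minimum equals $1$ when $N(t)=0$ makes this expression valid for $n=0$ as well, since $1\ge a$.

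Then I would sum over $n$ against the Poisson law $\mathrm{Pr}\{N(t)=n\}=e^{-\Lambda^{\beta}t}(\Lambda^{\beta}t)^{n}/n!$:
\begin{equation*}
	\sum_{n=0}^{\infty}e^{-\Lambda^{\beta}t}\frac{(\Lambda^{\beta}t)^{n}}{n!}(1-a^{\beta})^{n}=e^{-\Lambda^{\beta}t}\exp\big(\Lambda^{\beta}t(1-a^{\beta})\big)=e^{-\Lambda^{\beta}ta^{\beta}}=e^{-t(\Lambda a)^{\beta}}.
\end{equation*}
Substituting $\Lambda a=\sum_{j=1}^{k}\lambda_{j}(1-u^{j})$ recovers precisely the pgf $G_{\beta}(u,t)$ from \eqref{pgfa}, which finishes the argument.

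There is no genuine difficulty in this computation; the only two points meriting care are checking that $a\in(0,1)$ so that $1-a^{\beta}$ is an admissible probability, and the bookkeeping of the $N(t)=0$ term so that the exponential series telescopes cleanly into the single exponent $(\Lambda a)^{\beta}$.
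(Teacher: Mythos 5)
Your proposal is correct and follows essentially the same route as the paper's own proof: condition on $N(t)=n$, use independence to reduce the conditional probability to $\big(1-a^{\beta}\big)^{n}$ with $a=1-\frac{1}{\Lambda}\sum_{j=1}^{k}\lambda_{j}u^{j}$, sum the Poisson series to obtain $e^{-t(\Lambda a)^{\beta}}$, and identify $\Lambda a=\sum_{j=1}^{k}\lambda_{j}(1-u^{j})$ to match \eqref{pgfa}. Your explicit check that $a\in(0,1)$ is a small point of care the paper leaves implicit, but the argument is otherwise identical.
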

\begin{proof}
	Observe that
	\begin{align*}
		\mathrm{Pr}\bigg\{\min_{0\le i \le N(t)} X_{i}^{1/\beta}\ge 1-\frac{1}{\Lambda}\sum_{j=1}^{k}\lambda_{j}u^{j}\bigg\}&=\sum_{n=0}^{\infty}\mathrm{Pr}\bigg\{\min_{0\le i \le n} X_{i}^{1/\beta}\ge 1-\frac{1}{\Lambda}\sum_{j=1}^{k}\lambda_{j}u^{j}\bigg\}\frac{(\Lambda^{\beta}t)^{n}}{n!}e^{-t\Lambda^{\beta}}\\
		&=e^{-t\Lambda^{\beta}}\sum_{n=0}^{\infty}\bigg(\mathrm{Pr}\Big\{X_{1}^{1/\beta}\ge 1-\frac{1}{\Lambda}\sum_{j=1}^{k}\lambda_{j}u^{j}\Big\}\bigg)^{n}\frac{(\Lambda^{\beta}t)^{n}}{n!}\\
		&=e^{-t\Lambda^{\beta}}\sum_{n=0}^{\infty}\bigg(1- \Big(1-\frac{1}{\Lambda}\sum_{j=1}^{k}\lambda_{j}u^{j}\Big)^{\beta}\bigg)^{n}\frac{(\Lambda^{\beta}t)^{n}}{n!}\\
		&=\exp\bigg(-t\Big(\sum_{j=1}^{k}\lambda_{j}(1-u^{j})\Big)^{\beta}\bigg).
	\end{align*}
	This completes the proof.	
\end{proof}

From (\ref{bgd43}) and Remark \ref{fs}, its jump's distribution is given by
\begin{equation*}
\mathrm{Pr}\{Y(h)=n\}=\begin{cases*}
1-h\Lambda^{\beta}+o(h), \ n=0,\\
\displaystyle- h\sum_{ \Omega(k,n)}(\beta)_{z_{k}}\Lambda^{\beta-z_{k}}\prod_{j=1}^{k}\frac{(-\lambda_{j})^{x_{j}}}{x_{j}!}+o(h), \ n\ge1,
\end{cases*}
\end{equation*}
where $(\beta)_{z_{k}}=\beta(\beta-1)\cdots(\beta-z_k+1)$.

 The L\'evy measure and the distribution of first passage times of GSFCP follows from \eqref{levymeasure} and \eqref{tm}, respectively.
\begin{remark}
	In view of \eqref{compound}, it can be observed that the GSFCP is equal in distribution to a space fractional compound Poisson process, that is,
	\begin{equation*}
	Y(t)\stackrel{d}{=}\sum_{j=1}^{N(D_{\beta}(t))}X_{j}
	\end{equation*}
	where $\{N(D_{\beta}(t))\}_{t\ge0}$ is the space fractional Poisson process with intensity $\Lambda$, and it is independent of the sequence of iid random variables $\{X_{j}\}_{j\ge 1}$ with distribution \eqref{xj}.
\end{remark}

The state probabilities of GSFCP can be obtained by putting $f(s)=s^{\beta}$ in Remark 9 of Kataria and Khandakar (2022a) as follows:
\begin{align}\label{pntg}
\mathrm{Pr}\{Y(t)=n\}&=\sum_{ \Omega(k,n)}\prod_{j=1}^{k}\frac{\lambda_{j}^{x_{j}}}{x_{j}!}\frac{(-1)^{z_{k}}}{\Lambda^{z_{k}}}\frac{\mathrm{d}^{z_{k}}}{\mathrm{d}v^{z_{k}}}e^{-t\Lambda^{\beta} v^{\beta}}\Big|_{v=1}\nonumber\\
&=\sum_{ \Omega(k,n)}\prod_{j=1}^{k}\frac{\lambda_{j}^{x_{j}}}{x_{j}!}\frac{(-1)^{z_{k}}}{\Lambda^{z_{k}}}\sum_{r=0}^{\infty}\frac{(-\Lambda^{\beta}t)^{r}}{r!}\beta r(\beta r-1)\cdots (\beta r-z_{k}+1)\nonumber\\
&=\sum_{ \Omega(k,n)}\prod_{j=1}^{k}\frac{\lambda_{j}^{x_{j}}}{x_{j}!}\frac{(-1)^{z_{k}}}{\Lambda^{z_{k}}}\sum_{r=0}^{\infty}\frac{(-\Lambda^{\beta}t)^{r}}{r!}\frac{\Gamma(\beta r+1)}{\Gamma(\beta r+1-z_{k})}\nonumber\\
&=\sum_{ \Omega(k,n)}\prod_{j=1}^{k}\frac{\lambda_{j}^{x_{j}}}{x_{j}!}\frac{(-1)^{z_{k}}}{\Lambda^{z_{k}}} \: {}_1\psi_1 \left[ \left.
\begin{array}{l}
(1,\beta) \\
(1-z_{k},\beta)
\end{array}
\right| -\Lambda^\beta t \right],\ n\ge0,
\end{align}
where ${}_1\psi_1(\cdot)$ is the generalized Wright function (see Kilbas {\it et al.} (2006),
Eq. (1.11.14)).

Further, for $k=1$ and $\Lambda=\lambda$ in \eqref{pntg}, we get the pmf of space fractional Poisson process (see Orsingher and Polito (2012), Eq. (2.15)). 

\section{GCP time-changed by L\'evy subordinator}\label{section5}
In this section, we obtain some additional results for a time-changed version of the GCP, namely, the time-changed  generalized counting process-I (TCGCP-I) $\{M_{f}(t)\}_{t\ge0}$. It is defined as follows (see Kataria and Khandakar (2022a)):
\begin{equation*}
M_{f}(t)\coloneqq M(D_{f}(t)), \ t\ge0,
\end{equation*}
where the GCP $\{M(t)\}_{t\ge0}$ is independent of the L\'evy subordinator $\{D_{f}(t)\}_{t\ge0}$. Here, the condition that the L\'evy subordinator has finite moments is not essential.

The next result follows from \eqref{bgd43}.
\begin{proposition}
	The state probabilities $p_{f}(n,t)=\mathrm{Pr}\{M_{f}(t)=n\}$, $n\ge0$ of TCGCP-I solve the following system of differential equations:
	\begin{equation}\label{pnt}
	\frac{\mathrm{d}}{\mathrm{d}t}p_{f}(n,t)=-f(\Lambda)p_{f}(n,t)-\sum_{m=1}^{n}\sum_{ \Omega(k,m)}f^{(z_{k})}(\Lambda)\prod_{j=1}^{k}\frac{(-\lambda_{j})^{x_{j}}}{x_{j}!}p_{f}(n-m,t),
	\end{equation}
	with initial conditions 
	\begin{equation*}
	p_{f}(n,0)=\begin{cases*}
	1,\  n=0,\\
	0,\  n\ge 1.
	\end{cases*}
	\end{equation*}
\end{proposition}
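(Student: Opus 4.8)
The plan is to derive \eqref{pnt} from the short-time jump distribution \eqref{bgd43} together with the Lévy structure of the TCGCP-I. Since the GCP $\{M(t)\}_{t\ge0}$ is a Lévy process and $\{D_f(t)\}_{t\ge0}$ is an independent subordinator, the subordinated process $M_f(t)=M(D_f(t))$ is itself a Lévy process; in particular the increment $M_f(t+h)-M_f(t)$ is independent of $M_f(t)$ and equal in distribution to $M_f(h)$. First I would record the forward convolution relation obtained by conditioning on the value of $M_f(t)$ and on the size of the increment over $[t,t+h]$, using that the increments are nonnegative so that only $m\le n$ contributes:
\begin{equation*}
p_{f}(n,t+h)=\sum_{m=0}^{n}\mathrm{Pr}\{M_{f}(h)=m\}\,p_{f}(n-m,t),\quad n\ge0.
\end{equation*}

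Next I would substitute \eqref{bgd43} into this relation. The $m=0$ term contributes $(1-hf(\Lambda)+o(h))p_{f}(n,t)$, and for $1\le m\le n$ one has $\mathrm{Pr}\{M_{f}(h)=m\}=-h\sum_{\Omega(k,m)}f^{(z_{k})}(\Lambda)\prod_{j=1}^{k}\frac{(-\lambda_{j})^{x_{j}}}{x_{j}!}+o(h)$, whence
\begin{align*}
p_{f}(n,t+h)-p_{f}(n,t)&=-hf(\Lambda)p_{f}(n,t)\\
&\quad-h\sum_{m=1}^{n}\sum_{\Omega(k,m)}f^{(z_{k})}(\Lambda)\prod_{j=1}^{k}\frac{(-\lambda_{j})^{x_{j}}}{x_{j}!}\,p_{f}(n-m,t)+o(h).
\end{align*}
Dividing by $h$ and letting $h\to0$ then yields \eqref{pnt}. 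The initial conditions are immediate from $M_{f}(0)=M(D_{f}(0))=M(0)=0$ almost surely, so that $p_{f}(n,0)=\delta_{n,0}$.

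The step requiring care is the passage to the limit. Because $n$ is fixed, the index $m$ runs over the finite range $1\le m\le n$ and each $\Omega(k,m)$ is a finite index set, so only finitely many $o(h)$ contributions appear and their sum is again $o(h)$; this is precisely why the difference quotient converges. I would then note that the right-hand side of the resulting equation is continuous in $t$ (each $p_{f}(n-m,\cdot)$ being continuous by right-continuity of the paths and dominated convergence), which upgrades the one-sided limit to a genuine derivative, giving that $p_{f}(n,\cdot)$ is $C^{1}$ and satisfies \eqref{pnt}. I expect the only genuinely delicate point to be confirming this uniform absorption of the $o(h)$ terms across the finitely many summands, which is routine here exactly because the relevant double sum is finite.
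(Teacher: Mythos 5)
Your proposal is correct and takes essentially the same approach the paper intends: the paper gives no detailed proof, stating only that the proposition ``follows from \eqref{bgd43}'', and your argument---using the L\'evy structure of $\{M_f(t)\}_{t\ge0}$ to write the increment convolution $p_f(n,t+h)=\sum_{m=0}^{n}\mathrm{Pr}\{M_f(h)=m\}\,p_f(n-m,t)$, substituting \eqref{bgd43}, and passing to the limit in the difference quotient---is precisely the standard way to make that one-line claim rigorous. Your extra care with absorbing the finitely many $o(h)$ terms (each $\Omega(k,m)$ being finite) and with upgrading the one-sided limit to a genuine derivative goes slightly beyond what the paper records, but it is the same route, not a different one.
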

\begin{remark}
	On substituting $k=1$ in \eqref{pnt}, we get the system of differential equations that governs the state probabilities of a time-changed Poisson process (see Orsingher and Toaldo (2015), Eq. (2.1)).
\end{remark}

The system of differential equations given in \eqref{pnt} can equivalently written as follows:
	\begin{equation}\label{space}
	\frac{\mathrm{d}}{\mathrm{d}t}p_{f}(n,t)=-f\bigg(\Lambda\Big(I-\frac{1}{\Lambda}\sum_{j=1}^{n\wedge k}\lambda_{j}B^{j}\Big)\bigg)p_{f}(n,t),
	\end{equation}
	where $B$ is backward shift operator such that $Bp_{f}(n,t)=p_{f}(n-1,t)$. This can be shown as follows:
\scriptsize	\begin{align}\label{123}
	-f\bigg(\Lambda&\Big(I-\frac{1}{\Lambda}\sum_{j=1}^{n\wedge k}\lambda_{j}B^{j}\Big)\bigg)p_{f}(n,t)\nonumber\\
	&=-\int_{0}^{\infty}\left(I-e^{-\Lambda s\left(I-\frac{1}{\Lambda}\sum_{j=1}^{ n\wedge k}\lambda_{j}B^{j}\right)}\right)\overline{\nu}(\mathrm{d}s)p_{f}(n,t)\nonumber\\
	&=-\int_{0}^{\infty}\bigg(p_{f}(n,t)-e^{-\Lambda s}\sum_{r=0}^{\infty}\frac{s^{r}}{r!}\Big(\sum_{j=1}^{ n\wedge k}\lambda_{j}B^{j}\Big)^{r}p_{f}(n,t)\bigg)\overline{\nu}(\mathrm{d}s)\nonumber\\
	&=-\int_{0}^{\infty}\bigg(p_{f}(n,t)-e^{-\Lambda s}\sum_{r=0}^{\infty}s^{r}\underset{x_{1}+x_{2}+\dots+x_{n\wedge k}=r}{\sum}\prod_{j=1}^{n\wedge k}\frac{\lambda_{j}^{x_{j}}}{x_{j}!}B^{jx_{j}}p_{f}(n,t)\bigg)\overline{\nu}(\mathrm{d}s)\nonumber\\
	&=-\int_{0}^{\infty}\bigg(p_{f}(n,t)-e^{-\Lambda s}\sum_{r=0}^{\infty}s^{r}\underset{x_{1}+x_{2}+\dots+x_{k}=r}{\sum}\prod_{j=1}^{ k}\frac{\lambda_{j}^{x_{j}}}{x_{j}!}B^{jx_{j}}p_{f}(n,t)\bigg)\overline{\nu}(\mathrm{d}s),\ (\text{as} \ p_{f}(-j,t)=0, \ j\ge 1)\nonumber\\
	&=-\int_{0}^{\infty}\bigg(p_{f}(n,t)-e^{-\Lambda s}\sum_{r=0}^{\infty}s^{r}\sum_{m=r}^{\infty}\underset{x_{1}+2x_{2}+\dots+kx_{k}=m}{\sum_{ x_{1}+x_{2}+\dots+x_{k}=r}}\prod_{j=1}^{k}\frac{\lambda_{j}^{x_{j}}}{x_{j}!}B^{jx_{j}}p_{f}(n,t)\bigg)\overline{\nu}(\mathrm{d}s)\nonumber\\
	&=-\int_{0}^{\infty}\bigg(p_{f}(n,t)-e^{-\Lambda s}\sum_{m=0}^{\infty}\sum_{r=0}^{m}s^{r}\underset{x_{1}+2x_{2}+\dots+kx_{k}=m}{\sum_{ x_{1}+x_{2}+\dots+x_{k}=r}}\prod_{j=1}^{k}\frac{\lambda_{j}^{x_{j}}}{x_{j}!}B^{jx_{j}}p_{f}(n,t)\bigg)\overline{\nu}(\mathrm{d}s)\nonumber\\
	&=-\int_{0}^{\infty}\bigg(p_{f}(n,t)-e^{-\Lambda s}\sum_{m=0}^{\infty}\sum_{ \Omega(k,m)}s^{z_{k}}\prod_{j=1}^{k}\frac{\lambda_{j}^{x_{j}}}{x_{j}!}B^{m}p_{f}(n,t)\bigg)\overline{\nu}(\mathrm{d}s)\nonumber\\
	&=-\int_{0}^{\infty}\bigg(p_{f}(n,t)-\sum_{m=0}^{\infty}\sum_{ \Omega(k,m)}\prod_{j=1}^{k}\frac{\lambda_{j}^{x_{j}}}{x_{j}!}p_{f}(n-m,t)s^{z_{k}}e^{-\Lambda s}\bigg)\overline{\nu}(\mathrm{d}s)\nonumber\\
	&=-\int_{0}^{\infty}\left(1-e^{-\Lambda s}\right)p_{f}(n,t)\overline{\nu}(\mathrm{d}s)+\sum_{m=1}^{n}\sum_{ \Omega(k,m)}\prod_{j=1}^{k}\frac{\lambda_{j}^{x_{j}}}{x_{j}!}p_{f}(n-m,t)\int_{0}^{\infty}s^{z_{k}}e^{-\Lambda s}\overline{\nu}(\mathrm{d}s)\nonumber\\
	&=-f(\Lambda)p_{f}(n,t)-\sum_{m=1}^{n}\sum_{ \Omega(k,m)}f^{(z_{k})}(\Lambda)\prod_{j=1}^{k}\frac{(-\lambda_{j})^{x_{j}}}{x_{j}!}p_{f}(n-m,t).
	\end{align}
\normalsize
	\begin{remark}
	If we choose $f(s)=f_{2}(s)=s^{\beta}$ in \eqref{space}, we get the system of differential equations that governs the state probabilities of GSFCP in the following form:
	\begin{equation}\label{spacep}
	\frac{\mathrm{d}}{\mathrm{d}t}p_{f_{2}}(n,t)=-\Lambda^{\beta}\bigg(I-\frac{1}{\Lambda}\sum_{j=1}^{n\wedge k}\lambda_{j}B^{j}\bigg)^{\beta}p_{f_{2}}(n,t),
	\end{equation}
	with $p_{f_{2}}(n,0)=\delta_{n,0}$. Also, on substituting $\lambda_{j}=\lambda$ for all $1\le j\le k$ in \eqref{spacep}, we get the system of differential equations that governs the state probabilities of the space fractional Poisson process of order $k$.
\end{remark}

 For the Bern\v stein function $f_1$ given in \eqref{gamma}, the TCGCP-1 reduces to the GCP time-changed by an independent gamma subordinator, that is,
\begin{equation}\label{mg}
	M_{f_{1}}(t)\coloneqq M(Z(t)), \ t\ge0.
\end{equation}
\begin{proposition}
Let $e^{-\partial _{t}/a}$ be the shift operator defined in \eqref{shi}. Then, the pmf $p_{f_{1}}(n,t)=\mathrm{Pr}\{M_{f_{1}}(t)=n\}$, $n\ge0$ of $\{M_{f_{1}}(t)\}_{t\ge0}$ satisfies the following system:
	\begin{equation*}
		e^{-\partial _{t}/a}p_{f_{1}}(n,t)=	(1+\Lambda/b)p_{f_{1}}(n,t)-\frac{1}{b}\sum_{j=1}^{n\wedge k}\lambda_{j}p_{f_{1}}(n-j,t),
	\end{equation*}
	with initial conditions
	\begin{equation*}
		p_{f_{1}}(n,0)=\begin{cases*}
			1, \ n=0,\\
			0, \ n>1.
		\end{cases*}
	\end{equation*}
\end{proposition}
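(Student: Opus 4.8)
The plan is to start from the subordination representation of the state probabilities. Since by \eqref{mg} the process is $M_{f_{1}}(t)=M(Z(t))$ with the GCP $\{M(t)\}_{t\ge0}$ independent of the gamma subordinator $\{Z(t)\}_{t\ge0}$, conditioning on the value of $Z(t)$ gives
\begin{equation*}
	p_{f_{1}}(n,t)=\int_{0}^{\infty}p(n,x)g(x,t)\,\mathrm{d}x,\ n\ge0,
\end{equation*}
where $p(n,x)$ is the GCP pmf in \eqref{p(n,t)} and $g(x,t)$ is the gamma density. The key idea is to transfer the action of the shift operator $e^{-\partial_{t}/a}$ from the unknown $p_{f_{1}}$ onto $g$, for which the governing relation \eqref{res} is already available.

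First I would rewrite \eqref{res} as $e^{-\partial_{t}/a}g(x,t)=g(x,t)+\frac{1}{b}\frac{\partial}{\partial x}g(x,t)$, apply $e^{-\partial_{t}/a}$ to the representation above, and, justifying the interchange of the operator with the integral through its series form \eqref{shi}, move it under the integral sign to obtain
\begin{equation*}
	e^{-\partial_{t}/a}p_{f_{1}}(n,t)=p_{f_{1}}(n,t)+\frac{1}{b}\int_{0}^{\infty}p(n,x)\frac{\partial}{\partial x}g(x,t)\,\mathrm{d}x.
\end{equation*}
Next I would integrate by parts in $x$. The boundary contributions should vanish: as $x\to\infty$ by the decay $\lim_{|x|\to\infty}g(x,t)=0$ recorded in \eqref{res}, and as $x\to 0^{+}$ because, by \eqref{p(n,t)}, $p(n,x)$ carries a factor $x^{z_{k}}$ with $z_{k}\ge1$ for every $n\ge1$, which dominates the $x^{bt-1}$ behaviour of $g$. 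This leaves $\int_{0}^{\infty}p(n,x)\frac{\partial}{\partial x}g(x,t)\,\mathrm{d}x=-\int_{0}^{\infty}g(x,t)\frac{\mathrm{d}}{\mathrm{d}x}p(n,x)\,\mathrm{d}x$. I would then substitute the GCP forward equation, that is \eqref{cre} at $\beta=1$, namely $\frac{\mathrm{d}}{\mathrm{d}x}p(n,x)=-\Lambda p(n,x)+\sum_{j=1}^{n\wedge k}\lambda_{j}p(n-j,x)$, and recognise the two resulting $x$-integrals as $p_{f_{1}}(n,t)$ and $p_{f_{1}}(n-j,t)$. Collecting the terms then yields $e^{-\partial_{t}/a}p_{f_{1}}(n,t)=(1+\Lambda/b)p_{f_{1}}(n,t)-\frac{1}{b}\sum_{j=1}^{n\wedge k}\lambda_{j}p_{f_{1}}(n-j,t)$, which is the asserted system.

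The initial condition is immediate, since $g(x,0)=\delta(x)$ from \eqref{res} gives $p_{f_{1}}(n,0)=\int_{0}^{\infty}p(n,x)\delta(x)\,\mathrm{d}x=p(n,0)=\delta_{n,0}$. The step I expect to be the main obstacle is the careful treatment of the integration-by-parts boundary term at $x=0$ for the state $n=0$, where $p(0,x)=e^{-\Lambda x}$ does not vanish at the origin while $g(x,t)$ still carries an $x^{bt-1}$ factor; one must argue that this contribution is absorbed correctly, together with justifying differentiation and the shift operation under the integral sign (using the regularity of $g$ and the integrability $\int_{0}^{\infty}(x\wedge1)\,\overline{\nu}(\mathrm{d}x)<\infty$). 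Once these analytic points are secured, the remaining algebra is routine.
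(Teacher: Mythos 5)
Your proposal follows essentially the same route as the paper's own proof: the same subordination representation $p_{f_{1}}(n,t)=\int_{0}^{\infty}p(n,x)g(x,t)\,\mathrm{d}x$, the same transfer of the shift operator onto the gamma density via \eqref{res}, the same integration by parts followed by substitution of \eqref{cre} with $\beta=1$, and the same identification of the resulting integrals and of the initial condition through $g(x,0)=\delta(x)$. The boundary term $\left[p(n,x)g(x,t)\right]_{x=0}^{\infty}$ that you flag as the main obstacle is written down and then silently discarded in the paper's proof as well, so your attempt is, if anything, slightly more careful than the published argument on exactly the point where both are informal.
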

\begin{proof}
	From \eqref{mg}, we can write
	\begin{equation}\label{jhi}
		p_{f_{1}}(n,t)=\int_{0}^{\infty}p(n,x)g(x,t)\mathrm{d}z,
	\end{equation}
where $p(n,x)$ is the pmf of GCP and $g(x,t)$ is the density function of gamma subordinator.
	By applying the shift operator $e^{-\partial _{t}/a}$ on both sides of \eqref{jhi} and using \eqref{res}, we get
	\begin{align*}
		e^{-\partial _{t}/a}p_{f_{1}}(n,t)&=\int_{0}^{\infty}p(n,x)\left(g(x,t)+\frac{1}{b}\frac{\partial }{\partial x}g(x,t)\right)\mathrm{d}x\\
		&=p_{f_{1}}(n,t)+\frac{1}{b}\left[p(n,x)g(x,t)\right]_{x=0}^{\infty}-\frac{1}{b}\int_{0}^{\infty}g(x,t)\frac{\mathrm{d}}{\mathrm{d}x}p(n,x)\mathrm{d}x\\
		&=p_{f_{1}}(n,t)-\frac{1}{b}\int_{0}^{\infty}g(x,t)\bigg(-\Lambda p(n,x)+	\sum_{j=1}^{n\wedge k}\lambda_{j}p(n-j,x)\bigg)\mathrm{d}x,
	\end{align*}
where in the last step we have used \eqref{cre} with $\beta=1$.	The proof concludes in view of \eqref{jhi}.
\end{proof}

\subsection{First passage times of TCGCP-I}
Here, we study the first passage times of TCGCP-I. It is defined as
\begin{equation*}
	T_{f}^{n}\coloneqq\inf\{s\ge0:M_{f}(s)\ge n\}.
\end{equation*}
The distribution of $T_{f}^{n}$ can be written as
\begin{equation*}
	\mathrm{Pr}\{T_{f}^{n}<s\}=\mathrm{Pr}\{M_{f}(s)\ge n\}=\sum_{m=n}^{\infty}\int_{0}^{\infty}\sum_{\Omega(k,m)}\prod_{j=1}^{k}\frac{(\lambda_{j}x)^{x_{j}}}{x_{j}!}e^{-\Lambda x}\mathrm{Pr}\{D_{f}(s)\in \mathrm{d}x\}.
\end{equation*}
So, its density function $\mathscr{P}_{f}(n,s)=\mathrm{Pr}\{T_{f}^{n}\in \mathrm{d}s\}/\mathrm{d}s$ can be obtained as
\begin{align}
\mathscr{P}_{f}(n,s)&=\frac{\mathrm{d}}{\mathrm{d}s}\sum_{m=n}^{\infty}\int_{0}^{\infty}\sum_{\Omega(k,m)}\prod_{j=1}^{k}\frac{(\lambda_{j}x)^{x_{j}}}{x_{j}!}e^{-\Lambda x}\mathrm{Pr}\{D_{f}(s)\in \mathrm{d}x\}\label{fgt}\\
	&=\frac{\mathrm{d}}{\mathrm{d}s}\int_{0}^{\infty}\left(1-\mathrm{Pr}\{M(x)\le n-1\}\right)\mathrm{Pr}\{D_{f}(s)\in \mathrm{d}x\}\nonumber\\
	&=\frac{\mathrm{d}}{\mathrm{d}s}\int_{0}^{\infty}\bigg(1-\sum_{l=0}^{n-1}\sum_{\Omega(k,l)}\prod_{j=1}^{k}\frac{(\lambda_{j}x)^{x_{j}}}{x_{j}!}e^{-\Lambda x}\bigg)\mathrm{Pr}\{D_{f}(s)\in \mathrm{d}x\}\nonumber\\
	&=-\frac{\mathrm{d}}{\mathrm{d}s}\sum_{l=0}^{n-1}\sum_{\Omega(k,l)}\prod_{j=1}^{k}\frac{(-\lambda_{j})^{x_{j}}}{x_{j}!}\int_{0}^{\infty}\frac{\mathrm{d}^{z_{k}}}{\mathrm{d}\Lambda^{z_{k}}}e^{-\Lambda x}\mathrm{Pr}\{D_{f}(s)\in \mathrm{d}x\}\nonumber\\
	&=-\frac{\mathrm{d}}{\mathrm{d}s}\sum_{l=0}^{n-1}\sum_{\Omega(k,l)}\prod_{j=1}^{k}\frac{(-\lambda_{j})^{x_{j}}}{x_{j}!}\frac{\mathrm{d}^{z_{k}}}{\mathrm{d}\Lambda^{z_{k}}}e^{-sf(\Lambda)}\label{hitting}.
\end{align}
Equivalently, 
\begin{equation*}
	\mathrm{Pr}\{T_{f}^{n}\in \mathrm{d}s\}=\mathrm{Pr}\{T_{f}^{n-1}\in \mathrm{d}s\}-\sum_{\Omega(k,n-1)}\prod_{j=1}^{k}\frac{(-\lambda_{j})^{x_{j}}}{x_{j}!}\frac{\mathrm{d}}{\mathrm{d}s}\frac{\mathrm{d}^{z_{k}}}{\mathrm{d}\Lambda^{z_{k}}}e^{-sf(\Lambda)}\mathrm{d}s,
\end{equation*}
which gives a recursive relation between the distributions of $T_{f}^{n}$.
\begin{remark}
	From \eqref{hitting}, we have
	\begin{equation*}
	\mathrm{Pr}\{T_{f}^{1}\in \mathrm{d}s\}=f(\Lambda)e^{-sf(\Lambda)}\mathrm{d}s,	
	\end{equation*}
that is, the waiting time of the first event for TCGCP-I is exponential. 
\end{remark}
Next, we derive the system of differential equations that governs the distribution of $T_{f}^{n}$. The following result will be used:
\begin{proposition}\label{prop}
	Let $\mathcal{O}_{f}(u,t)=\sum_{n=1}^{\infty}u^{n}\mathscr{P}_{f}(n,t)$, $|u|<1$. Then,
	\begin{equation*}
		\frac{\mathrm{d}}{\mathrm{d}t}\mathcal{O}_{f}(u,t)=-f\Big(\sum_{j=1}^{k}(1-u^{j})\lambda_{j}\Big)\mathcal{O}_{f}(u,t).
	\end{equation*}
\end{proposition}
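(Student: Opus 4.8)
The plan is to reduce everything to the probability generating function of the TCGCP-I and to avoid manipulating the first passage densities $\mathscr{P}_f(n,t)$ individually. First I would record the elementary link between first passage times and state probabilities: since
\[
\mathrm{Pr}\{T_f^n<t\}=\mathrm{Pr}\{M_f(t)\ge n\}=1-\sum_{m=0}^{n-1}p_f(m,t),
\]
differentiating in $t$ gives $\mathscr{P}_f(n,t)=-\frac{\mathrm{d}}{\mathrm{d}t}\sum_{m=0}^{n-1}p_f(m,t)$ for $n\ge1$ (the same relation is visible directly from \eqref{hitting}). Substituting this into the definition of $\mathcal{O}_f(u,t)$, pulling the $t$-derivative outside the sum, and interchanging the order of the resulting double sum --- legitimate for $|u|<1$, where the geometric tail $\sum_{n=m+1}^{\infty}u^{n}=u^{m+1}/(1-u)$ is summable --- I would obtain
\[
\mathcal{O}_f(u,t)=-\frac{\mathrm{d}}{\mathrm{d}t}\sum_{m=0}^{\infty}p_f(m,t)\sum_{n=m+1}^{\infty}u^{n}=-\frac{u}{1-u}\frac{\partial}{\partial t}G_f(u,t),
\]
where $G_f(u,t)\coloneqq\sum_{m=0}^{\infty}u^{m}p_f(m,t)=\mathbb{E}(u^{M_f(t)})$ is the pgf of the TCGCP-I.

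The next step is to evaluate $G_f$ in closed form. Conditioning on the subordinator and using the pgf \eqref{pgfmt} of the GCP together with $\mathbb{E}(e^{-sD_f(t)})=e^{-tf(s)}$ yields
\[
G_f(u,t)=\mathbb{E}\Big(e^{-D_f(t)\phi(u)}\Big)=e^{-t f(\phi(u))},\qquad \phi(u)\coloneqq\sum_{j=1}^{k}(1-u^{j})\lambda_{j}.
\]
Therefore $\frac{\partial}{\partial t}G_f(u,t)=-f(\phi(u))G_f(u,t)$, whence $\mathcal{O}_f(u,t)=\frac{u}{1-u}f(\phi(u))G_f(u,t)$, and differentiating once more in $t$ gives $\frac{\mathrm{d}}{\mathrm{d}t}\mathcal{O}_f(u,t)=-f(\phi(u))\mathcal{O}_f(u,t)$, which is exactly the asserted equation.

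The conceptual content is thus entirely in the identity $\mathcal{O}_f=-\frac{u}{1-u}\partial_t G_f$ and the closed form $G_f=e^{-t f(\phi(u))}$; the differential equation then drops out. The only points requiring care are the two interchanges. I expect the main (though mild) obstacle to be justifying term-by-term differentiation and Fubini simultaneously: for $|u|<1$ the inner sums over $m$ are absolutely convergent, uniformly for $t$ in compact intervals because $\sum_m p_f(m,t)=1$ and $G_f(u,\cdot)$ is smooth, so dominated convergence legitimises both exchanges. With these bookkeeping points settled, the proof is immediate.
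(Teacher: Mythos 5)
Your proposal is correct and follows essentially the same route as the paper's own proof: both reduce $\mathcal{O}_f(u,t)$, via an interchange of the double sum and a geometric series, to $-\frac{u}{1-u}\partial_t G_f(u,t)$ with $G_f(u,t)=e^{-tf\left(\sum_{j=1}^{k}(1-u^{j})\lambda_{j}\right)}$ obtained by conditioning on the subordinator, and then differentiate once more in $t$. The only cosmetic difference is that you work with the lower-tail representation $\mathscr{P}_f(n,t)=-\frac{\mathrm{d}}{\mathrm{d}t}\sum_{m=0}^{n-1}p_f(m,t)$ and invoke the GCP pgf \eqref{pgfmt} directly, whereas the paper starts from the upper-tail form \eqref{fgt} and carries the explicit pmf \eqref{p(n,t)} through the same computation.
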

\begin{proof}
	On using \eqref{fgt}, we get
\begin{align*}
	\mathcal{O}_{f}(u,t)&=\frac{\mathrm{d}}{\mathrm{d}t}\sum_{m=1}^{\infty}\sum_{n=1}^{m}u^{n}\int_{0}^{\infty}\sum_{\Omega(k,m)}\prod_{j=1}^{k}\frac{(\lambda_{j}x)^{x_{j}}}{x_{j}!}e^{-\Lambda x}\mathrm{Pr}\{D_{f}(t)\in \mathrm{d}x\}\\
	&=\frac{\mathrm{d}}{\mathrm{d}t}\frac{u}{u-1}\sum_{m=1}^{\infty}(u^{m}-1)\int_{0}^{\infty}\sum_{\Omega(k,m)}\prod_{j=1}^{k}\frac{(\lambda_{j}x)^{x_{j}}}{x_{j}!}e^{-\Lambda x}\mathrm{Pr}\{D_{f}(t)\in \mathrm{d}x\}\\
	&=\frac{\mathrm{d}}{\mathrm{d}t}\frac{u}{u-1}\int_{0}^{\infty}\Big(e^{\sum_{j=1}^{k}(u^{j}-1)\lambda_{j}x}-1\Big)\mathrm{Pr}\{D_{f}(t)\in \mathrm{d}x\}\\
	&=\frac{\mathrm{d}}{\mathrm{d}t}\frac{u}{u-1}e^{-tf\big(\sum_{j=1}^{k}(1-u^{j})\lambda_{j}\big)}\\
	&=\frac{u}{1-u}f\Big(\sum_{j=1}^{k}(1-u^{j})\lambda_{j}\Big)e^{-tf\big(\sum_{j=1}^{k}(1-u^{j})\lambda_{j}\big)}.
\end{align*}
The result follows on taking the derivative with respect to $t$.
\end{proof}
\begin{theorem}
	The density function of the first passage times of TCGCP-I solves the following system of differential equations:
	\begin{equation*}
\frac{\mathrm{d}}{\mathrm{d}t}\mathscr{P}_{f}(n,t)=-f\bigg(\Lambda\Big(I-\frac{1}{\Lambda}\sum_{j=1}^{n\wedge k}\lambda_{j}B^{j}\Big)\bigg)\mathscr{P}_{f}(n,t),\ n\ge1.		
	\end{equation*}
\end{theorem}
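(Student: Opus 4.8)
The plan is to read the claimed equation off Proposition \ref{prop} by comparing powers of $u$, reusing the operator expansion already carried out in the chain of equalities \eqref{123}. First I would rewrite the argument of $f$ appearing in Proposition \ref{prop} as $\sum_{j=1}^k (1-u^j)\lambda_j = \Lambda\big(1-\tfrac{1}{\Lambda}\sum_{j=1}^k \lambda_j u^j\big)$, so that $u^j$ plays exactly the role that the backward shift $B^j$ plays in \eqref{space}. Expanding $f$ through its Bern\v{s}tein/L\'evy representation \eqref{fx} together with the exponential series, precisely as in \eqref{123} but with $u^j$ in place of $B^j$, yields the power series
\begin{equation*}
	f\Big(\sum_{j=1}^k (1-u^j)\lambda_j\Big) = \sum_{m=0}^{\infty} u^m \sum_{\Omega(k,m)} f^{(z_k)}(\Lambda)\prod_{j=1}^k \frac{(-\lambda_j)^{x_j}}{x_j!},
\end{equation*}
whose $m=0$ term equals $f(\Lambda)$ (here $\Omega(k,0)=\{(0,\dots,0)\}$ and $z_k=0$). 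The only analytic point to verify is that the interchange of the $\overline{\nu}$-integral with the double series is legitimate, and this follows from absolute convergence for $|u|<1$ exactly as in \eqref{123}.

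Next I would substitute $\mathcal{O}_f(u,t)=\sum_{n=1}^{\infty} u^n \mathscr{P}_f(n,t)$ into the identity of Proposition \ref{prop}, multiply the two power series on the right, and equate coefficients of $u^n$. Since $\mathcal{O}_f$ carries no $n=0$ term (the first passage time to level $0$ is degenerate, so effectively $\mathscr{P}_f(0,t)=0$), the coefficient of $u^n$ on the right is $-\sum_{m=0}^{n-1}\big(\sum_{\Omega(k,m)}f^{(z_k)}(\Lambda)\prod_{j} (-\lambda_j)^{x_j}/x_j!\big)\mathscr{P}_f(n-m,t)$, giving
\begin{equation*}
	\frac{\mathrm{d}}{\mathrm{d}t}\mathscr{P}_f(n,t) = -f(\Lambda)\mathscr{P}_f(n,t) - \sum_{m=1}^{n-1}\sum_{\Omega(k,m)} f^{(z_k)}(\Lambda)\prod_{j=1}^k \frac{(-\lambda_j)^{x_j}}{x_j!}\,\mathscr{P}_f(n-m,t).
\end{equation*}

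Finally I would recognize this right-hand side as the operator $-f(\Lambda(I-\frac{1}{\Lambda}\sum_{j=1}^{n\wedge k}\lambda_j B^j))$ applied to $\mathscr{P}_f(n,t)$: this is precisely the identity \eqref{123}, now read with $\mathscr{P}_f$ in place of $p_f$ and with the convention $\mathscr{P}_f(l,t)=0$ for $l\le 0$. The truncation is automatic, since the $m=n$ term would involve $\mathscr{P}_f(0,t)=0$; hence the sum $\sum_{m=1}^{n-1}$ coincides with the one produced by the operator truncated at $n\wedge k$, for the same reason that $n\wedge k$ could be replaced by $k$ in \eqref{123}. The main obstacle is therefore purely bookkeeping, namely keeping the degenerate $n=0$ contribution out of the generating function and confirming that the operator's natural truncation matches the coefficient extraction; the analytic content is already supplied by Proposition \ref{prop} and by the expansion \eqref{123}.
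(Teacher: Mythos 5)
Your proposal is correct and matches the paper's own argument in all essentials: both rest on Proposition \ref{prop} together with the expansion \eqref{123}, the implicit convention $\mathscr{P}_{f}(0,t)=0$, and a comparison of coefficients of $u^{n}$. The only difference is direction — the paper sums $u^{n}$ times the operator form over $n\ge1$ to recover $f\big(\sum_{j=1}^{k}(1-u^{j})\lambda_{j}\big)\mathcal{O}_{f}(u,t)$, whereas you expand $f\big(\sum_{j=1}^{k}(1-u^{j})\lambda_{j}\big)$ as a power series and extract coefficients — which is the same computation read in reverse.
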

\begin{proof}
On using \eqref{123} for $\mathscr{P}_{f}(n,t)$, we get
\begin{align*}
\sum_{n=1}^{\infty}u^{n}f&\bigg(\Lambda\Big(I-\frac{1}{\Lambda}\sum_{j=1}^{n\wedge k}\lambda_{j}B^{j}\Big)\bigg)\mathscr{P}_{f}(n,t)\\
&=f(\Lambda)\mathcal{O}_{f}(u,t)-\sum_{n=1}^{\infty}u^{n}\sum_{m=1}^{n}\sum_{ \Omega(k,m)}\prod_{j=1}^{k}\frac{\lambda_{j}^{x_{j}}}{x_{j}!}\mathscr{P}_{f}(n-m,t)\int_{0}^{\infty}s^{z_{k}}e^{-\Lambda s}\overline{\nu}(\mathrm{d}s)\\
&=f(\Lambda)\mathcal{O}_{f}(u,t)-\sum_{m=1}^{\infty}\sum_{n=m}^{\infty}u^{n}\sum_{ \Omega(k,m)}\prod_{j=1}^{k}\frac{\lambda_{j}^{x_{j}}}{x_{j}!}\mathscr{P}_{f}(n-m,t)\int_{0}^{\infty}s^{z_{k}}e^{-\Lambda s}\overline{\nu}(\mathrm{d}s)\\
&=f(\Lambda)\mathcal{O}_{f}(u,t)-\mathcal{O}_{f}(u,t)\sum_{m=1}^{\infty}u^{m}\sum_{ \Omega(k,m)}\prod_{j=1}^{k}\frac{\lambda_{j}^{x_{j}}}{x_{j}!}\int_{0}^{\infty}s^{z_{k}}e^{-\Lambda s}\overline{\nu}(\mathrm{d}s)\\
&=f(\Lambda)\mathcal{O}_{f}(u,t)-\mathcal{O}_{f}(u,t)\int_{0}^{\infty}e^{-\Lambda s}\Big(e^{\sum_{j=1}^{k}u^{j}\lambda_{j}s}-1\Big)\overline{\nu}(\mathrm{d}s)\\
&=f\Big(\sum_{j=1}^{k}(1-u^{j})\lambda_{j}\Big)\mathcal{O}_{f}(u,t).
\end{align*}
The result follows on using Proposition \ref{prop}.
\end{proof}

	Let $\mathcal{T}_{f}^{n}$ be the first-hitting times of TCGCP-I, that is,
\begin{equation*}
	\mathcal{T}_{f}^{n}\coloneqq \inf \left\{s\ge0:M_{f}(s)=n\right\},\ n\ge1.
\end{equation*}
As the TCGCP-I is a L\'evy process, it has independent increments. So, the hitting time distribution for state $n$ of TCGCP-I has the following form:
\begin{align*}
\mathrm{Pr}\left\{\mathcal{T}_{f}^{n}\in \mathrm{d}s\right\}&=\mathrm{Pr}\bigg\{\bigcup_{l=0}^{n-1}\{M_{f}(s)=l, M_{f}[s, s+\mathrm{d}s)=n-l)\}\bigg\}\\
&=\sum_{l=0}^{n-1}\sum_{\Omega(k,l)}\prod_{j=1}^{k}\frac{\lambda_{j}^{x_{j}}}{x_{j}!}\frac{(-1)^{z_{k}}}{\Lambda^{z_{k}}}\frac{\mathrm{d}^{z_{k}}}{\mathrm{d}v^{z_{k}}}e^{-sf(\Lambda v)}\Big|_{v=1} \cdot(-\mathrm{d}s)\sum_{ \Omega(k,n-l)}f^{(z_{k})}(\Lambda)\prod_{j=1}^{k}\frac{(-\lambda_{j})^{x_{j}}}{x_{j}!},
\end{align*}
where we have used Remark 9 of Kataria and Khandakar (2022a) and \eqref{bgd43} in the last step.
Therefore, 
\begin{equation}\label{tmf}
\mathrm{Pr}\left\{\mathcal{T}_{f}^{n}<\infty\right\}=\sum_{l=0}^{n-1}\sum_{\Omega(k,l)}\prod_{j=1}^{k}\frac{\lambda_{j}^{x_{j}}}{x_{j}!}\frac{(-1)^{z_{k}+1}}{\Lambda^{z_{k}}}\frac{\mathrm{d}^{z_{k}}}{\mathrm{d}v^{z_{k}}}\frac{1}{f(\Lambda v)}\bigg|_{v=1}\sum_{ \Omega(k,n-l)}f^{(z_{k})}(\Lambda)\prod_{j=1}^{k}\frac{(-\lambda_{j})^{x_{j}}}{x_{j}!}.
\end{equation}
On taking $k=1$ and $\Lambda=\lambda$ in \eqref{tmf}, we get
	\begin{equation*}
	\mathrm{Pr}\left\{\mathcal{T}_{f}^{n}<\infty\right\}\big|_{k=1}=\sum_{l=0}^{n-1}\frac{(-1)^{n+1}}{l!(n-l)!}\frac{\mathrm{d}^{l}}{\mathrm{d}v^{l}}\frac{1}{f(\lambda v)}\Big|_{v=1} \lambda^{n-l}f^{(n-l)}(\lambda),
	\end{equation*}
	which agrees with Eq. (2.7) of Garra {\it et al.} (2017).

\subsection{An application to risk theory}
Consider the following risk model with TCGCP-I as the counting process:
\begin{equation}\label{riskmodel}
	X(t)= ct-\sum_{j=1}^{M_{f}(t)}Z_j, \ t\geq 0,
\end{equation}
where $c>0$ denotes the constant premium rate and $ \{Z_j\}_{j\ge1}$ is a sequence of positive iid random variables with distribution $F$. Here, $Z_j$'s represent the claim sizes which are independent of $\{M_{f}(t)\}_{t\ge0}$. The expected value of $\{X(t)\}_{t\ge0}$ is given by
\begin{equation*}
	\mathbb{E}(X(t))=ct-\mu \mathbb{E}(M_f(t)),
\end{equation*}
where $\mu=\mathbb{E}(Z_j)$. The relative safety loading factor $\rho$ for \eqref{riskmodel} is given by
\begin{equation*}
	\rho = \frac{ \mathbb{E}(X(t))}{\mathbb{E}\big(\sum_{j=1}^{M_f(t)} Z_j\big)} = \frac{ct}{\mu \mathbb{E}(M_f(t))}-1.
\end{equation*} 

 Let $U(t)= u+X(t)$, $t\ge0$ be the surplus process 
where $u\ge0$ is the initial capital, $\tau$ be the time to ruin of an insurance company, that is, $ \tau=\inf\{t>0: U(t)<0\}$ with $\inf \phi=\infty$ and $\psi(u)=\mathrm{Pr}\{\tau<\infty\}$ be the ruin probability. Let 
\begin{equation}\label{jointpdf}
	K(u,y)= \mathrm{Pr}\{\tau<\infty, D\leq y \},\ y\ge0,
\end{equation}
be the joint probability that the ruin occurs in finite time and the deficit, that is, $D=|U(\tau)|$
at the time of ruin is not more than $y$. Also, let $u'=u+ch$ and $F^{*j}$ be the distribution of $Z_{1}+Z_{2}+\dots+Z_{j}$ for all $j\ge 1$. Now, using \eqref{bgd43} and \eqref{jointpdf}, we get
\begin{align*}
	K(u,y)=& (1-hf(\Lambda))K(u',y)+o(h) -h\sum_{n=1}^{\infty} \sum_{\Omega(k,n)}f^{(z_{k})}(\Lambda)\prod_{j=1}^{k}\frac{(-\lambda_{j})^{x_{j}}}{x_{j}!}\\
	&\hspace*{1cm}\times \bigg(\int_0^{u'}K(u'-x,y)\mathrm{d}F^{*n}(x) + F^{*n}(u'+y)-F^{*n}(u')\bigg).
\end{align*}
After rearranging the terms, we have that
\begin{align*}
	\frac{K(u',y)-K(u,y)}{ch}
	=& \frac{f(\Lambda)}{c} K(u',y)+\frac{o(h)}{h}-\frac{1}{c} \sum_{n=1}^{\infty} \sum_{\Omega(k,n)}f^{(z_{k})}(\Lambda)\prod_{j=1}^{k}\frac{(-\lambda_{j})^{x_{j}}}{x_{j}!}\\
	&\hspace*{1cm}\times  \bigg(\int_0^{u'}K(u'-x,y)\mathrm{d}F^{*n}(x) + F^{*n}(u'+y)-F^{*n}(u')\bigg).
\end{align*}
On taking $ h\rightarrow 0$, we get
\begin{align}\label{hgt}
	\frac{\partial K(u,y)}{\partial u}& = \frac{f(\Lambda)}{c}K(u,y)-\frac{1}{c} \sum_{n=1}^{\infty} \sum_{\Omega(k,n)}f^{(z_{k})}(\Lambda)\prod_{j=1}^{k}\frac{(-\lambda_{j})^{x_{j}}}{x_{j}!}\nonumber\\
	&\hspace*{1cm}\times\bigg(\int_0^{u}K(u-x,y)\mathrm{d}F^{*n}(x) + F^{*n}(u+y)-F^{*n}(u)\bigg).
\end{align}
Note that
\begin{equation*}
	\sum_{n=1}^{\infty} \sum_{\Omega(k,n)}f^{(z_{k})}(\Lambda)\prod_{j=1}^{k}\frac{(-\lambda_{j})^{x_{j}}}{x_{j}!}= \sum_{r=0}^{\infty}\frac{(-\Lambda)^r}{r!} f^{(r)}(\Lambda) -f(\Lambda)\\
	=f(0)-f(\Lambda)\\
	=-f(\Lambda),
\end{equation*}
 as $f(0)=0$. Now, let 
\begin{equation*}
	W(x)=-\frac{1}{f(\Lambda)}\sum_{n=1}^{\infty} \sum_{\Omega(k,n)}f^{(z_{k})}(\Lambda)\prod_{j=1}^{k}\frac{(-\lambda_{j})^{x_{j}}}{x_{j}!}F^{*n}(x)
\end{equation*}
be the mixer distribution of the aggregated claims. Thus, \eqref{hgt} reduces to
	\begin{equation}\label{appl:thmde}
		\frac{\partial K(u,y)}{\partial u} = \frac{f(\Lambda)}{c}\left( K(u,y)+W(u)-W(u+y)-\int_0^{u}K(u-x,y)\mathrm{d}W(x) \right).
	\end{equation}
 As $\lim_{y\rightarrow \infty} K(u,y)=\psi(u)$, so by letting $y\to \infty$ in \eqref{appl:thmde}, we get the following governing equation of the ruin probability:
	\begin{equation*}
	\frac{\mathrm{d}}{\mathrm{d} u}\psi(u)= \frac{f(\Lambda)}{c}\left(\psi(u)+W(u)-1-\int_0^{u}\psi(u-x)\mathrm{d}W(x) \right).
\end{equation*}
\begin{theorem}
The joint probability of ruin time and deficit at the time of ruin with zero initial capital is given by
	\begin{equation}\label{appl:thm2}
		K(0,y)= \frac{f(\Lambda)}{c} \int_0^{y}\left(1-W(u)\right)\mathrm{d}u.
	\end{equation}
\end{theorem}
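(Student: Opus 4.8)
The plan is to integrate the integro-differential equation \eqref{appl:thmde} with respect to $u$ over the whole half-line $[0,\infty)$ and to exploit the boundary behaviour of $K$ and $W$ at infinity. First I would record the two facts that drive the argument. On the one hand, an infinite reserve forbids ruin in finite time, so $K(u,y)\to 0$ as $u\to\infty$; on the other hand, $W$ is a genuine distribution function with $W(\infty)=1$. The latter is not an extra hypothesis but a consequence of the identity already established just before the statement, namely $\sum_{n=1}^{\infty}\sum_{\Omega(k,n)}f^{(z_{k})}(\Lambda)\prod_{j=1}^{k}(-\lambda_{j})^{x_{j}}/x_{j}!=-f(\Lambda)$: letting $x\to\infty$ in the definition of $W$ and using $F^{*n}(\infty)=1$ turns this identity into $W(\infty)=1$.

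With these in hand, I would integrate both sides of \eqref{appl:thmde} from $0$ to $\infty$. The left-hand side telescopes to $K(\infty,y)-K(0,y)=-K(0,y)$. On the right-hand side there are four terms. The term $\int_{0}^{\infty}K(u,y)\,\mathrm{d}u$ and the convolution term $\int_{0}^{\infty}\int_{0}^{u}K(u-x,y)\,\mathrm{d}W(x)\,\mathrm{d}u$ are treated together: applying Fubini to the double integral and substituting $v=u-x$ gives $\int_{0}^{\infty}K(v,y)\,\mathrm{d}v\cdot W(\infty)=\int_{0}^{\infty}K(v,y)\,\mathrm{d}v$, which cancels the first term exactly because $W(\infty)=1$.

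The remaining contribution is $\int_{0}^{\infty}\bigl(W(u)-W(u+y)\bigr)\,\mathrm{d}u$, which must be evaluated as a single convergent integral since each piece diverges on its own. Truncating at $N$, shifting the second integral by $y$, and letting $N\to\infty$ while using $W\to 1$ on $[N,N+y]$ yields $\int_{0}^{y}W(u)\,\mathrm{d}u-y=-\int_{0}^{y}\bigl(1-W(u)\bigr)\,\mathrm{d}u$. Collecting the pieces gives $-K(0,y)=-\tfrac{f(\Lambda)}{c}\int_{0}^{y}(1-W(u))\,\mathrm{d}u$, which is precisely \eqref{appl:thm2}.

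I expect the main obstacle to lie in the analytic justifications rather than in the formal manipulation: verifying that $K(u,y)\to 0$ as $u\to\infty$ and, crucially, that $\int_{0}^{\infty}K(u,y)\,\mathrm{d}u<\infty$, so that the cancellation of the two $K$-terms is a legitimate subtraction of finite quantities rather than an $\infty-\infty$ indeterminacy, and checking the hypotheses of Fubini for the convolution term. These rest on the integrability of $K$ in $u$, which in turn should follow from the positive safety loading making ruin a genuinely transient event; I would make this precise before performing the interchange in the order of integration and the termwise passage to the limit at infinity.
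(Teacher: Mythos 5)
Your proposal is correct and follows essentially the same route as the paper's proof: integrate \eqref{appl:thmde} over $(0,\infty)$, use $K(\infty,y)=0$ to reduce the left side to $-K(0,y)$, cancel the term $\int_0^\infty K(u,y)\,\mathrm{d}u$ against the convolution term via a change of variable (valid since $W(\infty)=1$), and evaluate $\int_0^\infty\left(W(u+y)-W(u)\right)\mathrm{d}u=\int_0^y\left(1-W(u)\right)\mathrm{d}u$. If anything, you are more careful than the paper, which performs the cancellation and the shift without addressing the integrability of $K(\cdot,y)$, the Fubini interchange, or the potential $\infty-\infty$ issue that you explicitly flag.
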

\begin{proof}
	On integrating \eqref{appl:thmde} with respect to $u$ on $ (0,\infty)$, we get
	\begin{align*}
		K(\infty,y)-K(0,y)&= \frac{f(\Lambda)}{c} \left(\int_0^{\infty}K(u,y)\mathrm{d}u+\int_0^{\infty}\left(W(u)-W(u+y)\right)\mathrm{d}u\right)\\
		&\ \ - \frac{f(\Lambda)}{c}\int_0^{\infty} \int_0^{u}K(u-x,y)\mathrm{d}W(x)\mathrm{d}u .
	\end{align*}
Thus, using $K(\infty,y)=0$ and the change of variable yields	the following
\begin{equation*}
	K(0,y)=\frac{f(\Lambda)}{c} \int_0^{\infty}\left(W(u+y)-W(u)\right)\mathrm{d}u=\frac{f(\Lambda)}{c} \int_0^{y}\left(1-W(u)\right)\mathrm{d}u.
\end{equation*}
This completes the proof.
\end{proof}
\begin{remark}
On taking $y\to\infty $ in \eqref{appl:thm2}, we get	$\psi(0) = \frac{f(\Lambda)}{c}\int_0^{\infty}(1-W(u))\mathrm{d}u$.
\end{remark}

\subsection{GFCP time-changed by L\'evy subordinator}
Kataria and Khandakar (2022a) introduced the following  time-changed process, namely, the TCGFCP-I: 
\begin{equation}\label{zft}
	M_{f}^{\beta}(t)\coloneqq M^{\beta}(D_{f}(t)), \ t\ge0,
\end{equation}
where the GFCP $\{M^{\beta}(t)\}_{t\ge0}$ is independent of the L\'evy subordinator $\{D_{f}(t)\}_{t\ge0}$ whose moments are finite.

 First, we give an additional result for $\{M_{f}^{\beta}(t)\}_{t\ge0}$.
 \begin{proposition}
	The one-dimensional distributions of TCGFCP-I are not infinitely divisible.
\end{proposition}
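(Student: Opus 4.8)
The plan is to use the classical characterization that a non-degenerate $\mathbb{N}_{0}$-valued random variable is infinitely divisible if and only if its law is compound Poisson, equivalently its pgf $G$ admits the form $G(u)=\exp\big(\sum_{n\ge1}\rho_{n}(u^{n}-1)\big)$ with all canonical coefficients $\rho_{n}\ge0$ (Feller; Steutel--van Harn). It therefore suffices to compute the pgf of $M_{f}^{\beta}(t)$ and to exhibit a single negative $\rho_{n}$. Writing $M_{f}^{\beta}(t)=M\big(Y_{\beta}(D_{f}(t))\big)$ by \eqref{zft} and the definition of the GFCP, setting $S\coloneqq Y_{\beta}(D_{f}(t))$, and conditioning on $S$ via the GCP pgf \eqref{pgfmt}, independence gives
\begin{equation*}
G_{f}^{\beta}(u,t)\coloneqq\mathbb{E}\big(u^{M_{f}^{\beta}(t)}\big)=\mathbb{E}\big(e^{-S\eta(u)}\big)=\mathcal{L}_{S}\big(\eta(u)\big),\qquad \eta(u)=\sum_{j=1}^{k}\lambda_{j}(1-u^{j}),
\end{equation*}
where $\mathcal{L}_{S}$ is the Laplace transform of $S$, and $\eta(0)=\Lambda$, $\eta(1)=0$.

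I would then expand $\log G_{f}^{\beta}(u,t)=\phi(\eta(u))$, with $\phi\coloneqq\log\mathcal{L}_{S}$, in powers of $u$ about $u=0$ through Fa\`a di Bruno, so that each $\rho_{n}$ becomes an explicit polynomial in $\lambda_{1},\dots,\lambda_{k}$ and the derivatives $\phi^{(j)}(\Lambda)$. Because $\mathcal{L}_{S}'<0$ and $\mathcal{L}_{S}$ is log-convex (Cauchy--Schwarz yields $\phi''\ge0$), the first two coefficients $\rho_{1}=-\phi'(\Lambda)\lambda_{1}$ and $\rho_{2}=-\phi'(\Lambda)\lambda_{2}+\tfrac12\phi''(\Lambda)\lambda_{1}^{2}$ are automatically non-negative; hence any violation must appear at higher order, which already shows the result cannot be caught by a first- or second-moment inequality. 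In the base case $k=1$ the expansion collapses to $\rho_{n}=\frac{(-\Lambda)^{n}}{n!}\phi^{(n)}(\Lambda)$, so $M_{f}^{\beta}(t)$ is infinitely divisible precisely when $(-1)^{n}\phi^{(n)}(\Lambda)\ge0$ for all $n$, i.e. when $-\phi=-\log\mathcal{L}_{S}$ is a Bernstein function, i.e. when $S$ itself is infinitely divisible.

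The crux is thus to prove that $S=Y_{\beta}(D_{f}(t))$ is \emph{not} infinitely divisible. For $k=1$ this closes the proof at once through the mixed-Poisson theorem (a mixed Poisson law is infinitely divisible iff its mixing law is). For general $k$ I would instead track the top-degree contribution $\frac{(-1)^{j}\phi^{(j)}(\Lambda)}{j!}\lambda_{k}^{\,j}$ to $\rho_{kj}$ and show it inherits the bad sign of $\phi$. The non-infinite-divisibility of $S$ I would read off the inverse stable subordinator: $Y_{\beta}$ has Mittag-Leffler marginals, $\mathbb{E}\big(e^{-wY_{\beta}(s)}\big)=E_{\beta,1}(-w s^{\beta})$, whence $\mathcal{L}_{S}(w)=\mathbb{E}\big[E_{\beta,1}\big(-w\,(D_{f}(t))^{\beta}\big)\big]$, and one checks that $-\log\mathcal{L}_{S}$ violates the alternating-sign (complete monotonicity of the derivative) test for a Bernstein function at some finite order.

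The main obstacle is exactly this higher-order sign analysis. Since $\rho_{1},\rho_{2}\ge0$ always, one must locate the first order $j_{0}$ at which $(-1)^{j_{0}}\phi^{(j_{0})}(\Lambda)<0$ directly from the Mittag-Leffler series for $\mathcal{L}_{S}$, and then, for $k\ge2$, control the fact that $\rho_{n}$ mixes all orders $j$ with $n/k\le j\le n$, arguing that the single bad-sign term at the top degree survives the others. A second delicate point is that the averaging by the independent subordinator $D_{f}$ must be shown not to restore infinite divisibility of $S$ (the clean sub-case being $D_{f}(t)=t$, where $S=Y_{\beta}(t)$ is the bare inverse stable subordinator). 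Producing a strictly negative $\rho_{n}$ cleanly is where the real work concentrates; the pgf reduction and the $k=1$ case are routine.
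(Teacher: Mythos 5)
Your proposal is a program rather than a proof, and its one ``routine'' step rests on a false equivalence. For $k=1$ you invoke ``a mixed Poisson law is infinitely divisible iff its mixing law is''; only the \emph{if} direction (Maceda/Feller: ID mixing implies ID mixture) is a theorem. Infinite divisibility of the $\mathbb{N}_0$-valued mixture is equivalent to the sign condition $(-1)^n\phi^{(n)}(\Lambda)\ge 0$ for all $n$ \emph{at the single point} $\Lambda$, whereas infinite divisibility of $S$ is the same condition at \emph{every} $s>0$; your chain ``$(-1)^{n}\phi^{(n)}(\Lambda)\ge0$ for all $n$, i.e.\ $-\phi$ is Bernstein, i.e.\ $S$ is ID'' silently upgrades a pointwise condition to a global one. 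The converse of Maceda's theorem fails in general (non-ID mixing laws can yield ID Poisson mixtures; this is the theme of quasi-infinite divisibility, cf.\ Puri--Goldie), so even a complete proof that $S=Y_{\beta}(D_f(t))$ is not ID would not close the $k=1$ case by this route. Moreover, that claim is itself never proved: you only assert that ``one checks'' a sign violation for $-\log\mathcal{L}_S$ at some finite order. Note that $S\stackrel{d}{=}(D_f(t))^{\beta}Y_{\beta}(1)$ is a product with an independent positive factor, and such randomization can restore infinite divisibility (for instance, any mixture of exponentials is ID by Goldie's theorem), so the known non-ID of $Y_{\beta}(1)$ does not transfer to $S$ by itself. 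For $k\ge2$ the Fa\`a di Bruno sign analysis, and the claim that the top-degree ``bad'' term survives cancellation, are likewise deferred. Thus at every point where the argument must actually exhibit a strictly negative canonical coefficient, nothing is exhibited.

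The paper avoids all pointwise sign analysis with a soft limiting argument: by self-similarity of the inverse stable subordinator, $M_f^{\beta}(t)\stackrel{d}{=}M\left((D_f(t))^{\beta}Y_{\beta}(1)\right)$; dividing by $t^{\beta}$ and letting $t\to\infty$, the law of large numbers \eqref{limit} for the GCP together with the strong law for the subordinator (this is where finiteness of moments of $D_f$ enters) gives convergence in distribution of $M_f^{\beta}(t)/t^{\beta}$ to $\big(\sum_{j=1}^{k}j\lambda_{j}\big)\left(\mathbb{E}(D_f(1))\right)^{\beta}Y_{\beta}(1)$. Since the class of ID laws is closed under positive scaling and weak limits, and $Y_{\beta}(1)$ is not ID (Vellaisamy and Kumar (2018a)), the one-dimensional distributions of TCGFCP-I cannot be infinitely divisible. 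If you wish to salvage your canonical-coefficient strategy, the honest statement of what must be proved is that the sign condition fails \emph{at the specific point} $\Lambda$; once formulated this way, the detour through infinite divisibility of $S$ is seen to be both insufficient (it uses the wrong direction of Maceda's theorem) and unnecessary (a sign violation at $\Lambda$ is already the conclusion), and the remaining task is exactly the hard computation you have postponed.
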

\begin{proof}
	Using the self-similarity property of inverse stable subordinator, we get
	\begin{equation*}
	M_{f}^{\beta}(t)=M(Y_{\beta}(D_{f}(t)))\stackrel{d}{=}M\left((D_{f}(t))^{\beta}Y_{\beta}(1)\right).
	\end{equation*}
As  $D_{f}(t)\to \infty$ as $t\to \infty$, almost surely,	we have
	\begin{align*}
	\lim\limits_{t\to\infty}\frac{M_{f}^{\beta}(t)}{t^{\beta}}&\stackrel{d}{=}\lim\limits_{t\to\infty}\frac{M\left((D_{f}(t))^{\beta}Y_{\beta}(1)\right)}{t^{\beta}}\\
	&=Y_{\beta}(1)\lim\limits_{t\to\infty}\frac{M\left((D_{f}(t))^{\beta}Y_{\beta}(1)\right)}{(D_{f}(t))^{\beta}Y_{\beta}(1)}\Big(\frac{D_{f}(t)}{t}\Big)^{\beta}\\
	&\stackrel{d}{=}\sum_{j=1}^{k}j\lambda_{j}Y_{\beta}(1)\lim\limits_{t\to\infty}\Big(\frac{D_{f}(t)}{t}\Big)^{\beta}, \ (\text{using}\ \eqref{limit})\\
	&\stackrel{d}{=}\sum_{j=1}^{k}j\lambda_{j}Y_{\beta}(1)\left(\mathbb{E}\left(D_{f}(1)\right)\right)^{\beta},
	\end{align*}
where in the last step we have used the strong law of large numbers for a L\'evy subordinator (see Bertoin (1996), p. 92). Thus, the result follows as $Y_{\beta}(1)$ is not infinitely divisible (see Vellaisamy and Kumar (2018a)).
\end{proof}
 In (\ref{zft}), we take the Bern\v stein function $f_1$  to obtain the GFCP time-changed by an independent gamma subordinator, that is,
\begin{equation*}
		M_{f_{1}}^{\beta}(t)\coloneqq M^{\beta}
	(Z(t)), \ t\ge0.
\end{equation*}

The mean, variance, covariance and LRD property of $\{M_{f_{1}}^{\beta}(t)\}_{t\ge0}$ follows from Section 4.1 of Kataria and Khandakar (2022a). 
It can be shown that the increment process $\{M_{f_{1}}^{\beta}(t+h)-M_{f_{1}}^{\beta}(t)\}_{t\ge0}$, $h>0$ is fixed, exhibits the short-range dependence property. The proof follows similar lines to that of Theorem 4 of Maheshwari
and Vellaisamy (2016).

For $k=1$, the process $\{M_{f_{1}}^{\beta}(t)\}_{t\ge0}$ reduces to the time fractional negative binomial process  $\{\mathcal{N}^{1}_{\beta}(t)\}_{t\ge0}$ with intensity $\lambda$ (see Vellaisamy and Maheshwari (2018b)). For notational convenience, we denote $Q^{\beta}(t,\lambda)=\mathcal{N}^{1}_{\beta}(t)$, $t\ge0$.
Using \eqref{pgftfnbp}, its $m$th factorial moment is given by
\begin{equation}\label{facn}
\mathbb{E}\left(Q^{\beta}(t,\lambda)(Q^{\beta}(t,\lambda)-1)\cdots(Q^{\beta}(t,\lambda)-m+1)\right)=\frac{\lambda^{m}m!\Gamma (bt+m\beta)}{\Gamma(m\beta+1)a^{m\beta}\Gamma(bt)},\ m\ge 1.
\end{equation}
On putting $\beta=1$ in \eqref{facn}, we get the $m$th factorial moment of negative binomial process (see Orsingher and Toaldo (2015), Remark 4.7).

Next result gives the conditional distribution of $X_{(k)}^{Q^{\beta}(t,\lambda)}$, a $k$th order statistic from the sample of size $Q^{\beta}(t,\lambda)$ where $k\in \{1,2,\dots, Q^{\beta}(t,\lambda)\}$.
\begin{theorem}\label{kthorder}
Let $\{X_{i}\}_{i\ge1}$ be a sequence of iid random variables with distribution function $F$. Then,
	\begin{equation*}
\mathrm{Pr}\{X_{(k)}^{Q^{\beta}(t,\lambda)}<z\big|Q^{\beta}(t,\lambda)\ge k\}=\frac{\mathrm{Pr}\{Q^{\beta}(t,\lambda F(z))\ge k\}}{\mathrm{Pr}\{Q^{\beta}(t,\lambda)\ge k\}}.	
	\end{equation*}
\end{theorem}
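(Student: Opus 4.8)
The plan is to reduce the statement to a thinning property of the time fractional negative binomial process. Writing $N=Q^{\beta}(t,\lambda)$ for brevity and conditioning in the usual way, the left-hand side equals
\begin{equation*}
\mathrm{Pr}\{X_{(k)}^{N}<z\mid N\ge k\}=\frac{\mathrm{Pr}\{X_{(k)}^{N}<z,\ N\ge k\}}{\mathrm{Pr}\{N\ge k\}},
\end{equation*}
and the denominator is already $\mathrm{Pr}\{Q^{\beta}(t,\lambda)\ge k\}$. So it suffices to identify the numerator with $\mathrm{Pr}\{Q^{\beta}(t,\lambda F(z))\ge k\}$.

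First I would rewrite the numerator event combinatorially. Let $B=\#\{1\le i\le N:X_{i}<z\}$ be the number of sample points lying below $z$. Since the $B$ smallest order statistics are precisely those observations falling below $z$, one has the exact event identity $\{X_{(k)}^{N}<z,\ N\ge k\}=\{B\ge k\}$: if $B\ge k$ then $N\ge B\ge k$ and $X_{(k)}\le X_{(B)}<z$, while conversely $X_{(k)}<z$ together with $N\ge k$ forces at least $k$ observations below $z$. Hence the numerator equals $\mathrm{Pr}\{B\ge k\}$, and the whole problem reduces to showing $B\stackrel{d}{=}Q^{\beta}(t,\lambda F(z))$.

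Next I would compute the pgf of $B$ by conditioning on $N$ and using that the $X_{i}$ are iid with $\mathrm{Pr}\{X_{i}<z\}=F(z)$, so each indicator $\mathbf 1\{X_{i}<z\}$ has pgf $1-F(z)(1-u)$. This gives
\begin{equation*}
\mathbb{E}\left(u^{B}\right)=\mathbb{E}\left(\left(1-F(z)(1-u)\right)^{N}\right),
\end{equation*}
that is, the pgf of $N=Q^{\beta}(t,\lambda)$ evaluated at the point $1-F(z)(1-u)$. The key observation, read off directly from \eqref{pgftfnbp}, is that the pgf of $Q^{\beta}(t,\lambda)$ depends on $\lambda$ and $u$ only through the product $\lambda(1-u)$. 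Substituting the argument $1-F(z)(1-u)$ replaces $1-u$ by $F(z)(1-u)$, hence $\lambda(1-u)$ by $\lambda F(z)(1-u)$; comparing with \eqref{pgftfnbp} with $\lambda$ replaced by $\lambda F(z)$ shows that $\mathbb{E}(u^{B})$ coincides term by term with the pgf of $Q^{\beta}(t,\lambda F(z))$. By uniqueness of the probability generating function this yields $B\stackrel{d}{=}Q^{\beta}(t,\lambda F(z))$, whence $\mathrm{Pr}\{B\ge k\}=\mathrm{Pr}\{Q^{\beta}(t,\lambda F(z))\ge k\}$, and dividing by $\mathrm{Pr}\{N\ge k\}$ gives the claim.

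I expect the only genuine content to be this thinning step, namely verifying the scaling structure $\mathbb{E}(u^{Q^{\beta}(t,\lambda)})=h(\lambda(1-u))$ implicit in \eqref{pgftfnbp}; the order-statistic bookkeeping and the conditioning on $N$ are routine. It is worth recording the degenerate convention that $X_{(k)}$ is undefined when $N<k$, which is harmless since such outcomes are excluded by $N\ge k$ and contribute nothing to $\{B\ge k\}$.
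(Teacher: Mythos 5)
Your proof is correct, but it takes a genuinely different route from the paper's. The paper conditions on $Q^{\beta}(t,\lambda)=n$, inserts the standard order-statistic formula \eqref{stan} together with the explicit pmf \eqref{tfnbppm}, and then works through a chain of sum interchanges and binomial identities (including $\binom{l}{n}\binom{n}{j}=\binom{l}{j}\binom{l-j}{n-j}$ and two applications of the binomial theorem) to reassemble the numerator as $\sum_{j\ge k}\mathrm{Pr}\{Q^{\beta}(t,\lambda F(z))=j\}$. You instead note the event identity $\{X_{(k)}^{N}<z,\ N\ge k\}=\{B\ge k\}$ for the thinned count $B$, and prove $B\stackrel{d}{=}Q^{\beta}(t,\lambda F(z))$ directly from the pgf \eqref{pgftfnbp}: since that pgf is a function of $\lambda(1-u)$ alone, binomial thinning with probability $F(z)$ --- which composes the pgf with $u\mapsto 1-F(z)(1-u)$, a point that stays in the unit interval since it is a convex combination of $1$ and $u$ --- maps the family into itself with $\lambda\mapsto\lambda F(z)$. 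Your route is shorter, avoids the pmf-level series manipulations entirely, and isolates the structural reason the theorem holds: it is valid for \emph{any} counting process whose pgf depends on the intensity and argument only through $\lambda(1-u)$. That observation also yields, essentially for free, the extensions the paper proves or asserts separately (Theorem \ref{statsticst} for the STFPP, whose pgf \eqref{pgfgstfcp} with $k=1$ is again a function of $\lambda(1-u)$, and the space and space-time fractional negative binomial cases), whereas the paper repeats the same computation for each. What the paper's approach buys in exchange is an explicit verification at the level of the pmf \eqref{tfnbppm}, with no appeal to uniqueness of pgfs. One point worth making explicit in your write-up: both arguments tacitly assume $\{X_{i}\}_{i\ge1}$ is independent of $Q^{\beta}(t,\lambda)$, which is the intended (though unstated) hypothesis, and is needed both for your identity $\mathbb{E}(u^{B})=\mathbb{E}\bigl((1-F(z)(1-u))^{N}\bigr)$ and for the paper's step $\mathrm{Pr}\{X_{(k)}^{Q^{\beta}(t,\lambda)}<z\mid Q^{\beta}(t,\lambda)=n\}=\mathrm{Pr}\{X_{(k)}^{n}<z\}$.
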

\begin{proof}
For $1\le k\le n$, we will use the following standard result for the distribution of $k$th order statistics:
	\begin{equation}\label{stan}
\mathrm{Pr}\{X_{(k)}^{n}<z\}=\sum_{j=k}^{n}\binom{n}{j}F^{j}(z)\left(1-F(z)\right)^{n-j}.
	\end{equation}
	Now,
	\begin{align}\label{orderstat}
\mathrm{Pr}\{X_{(k)}^{Q^{\beta}(t,\lambda)}<z\big|Q^{\beta}(t,\lambda)\ge k\}&=\frac{\sum_{n=k}^{\infty}\mathrm{Pr}\{X_{(k)}^{Q^{\beta}(t,\lambda)}<z,Q^{\beta}(t,\lambda)=n\}}{\mathrm{Pr}\{Q^{\beta}(t,\lambda)\ge k\}}\nonumber\\
&=\frac{\sum_{n=k}^{\infty}\mathrm{Pr}\{X_{(k)}^{Q^{\beta}(t,\lambda)}<z\big|Q^{\beta}(t,\lambda)=n\}	\mathrm{Pr}\{Q^{\beta}(t,\lambda)=n\}}{\mathrm{Pr}\{Q^{\beta}(t,\lambda)\ge k\}}.
	\end{align}
Using \eqref{tfnbppm} and \eqref{stan},	the quantity in the numerator of the right hand side of \eqref{orderstat} can be evaluated as follows:
	\begin{align*}
\sum_{n=k}^{\infty}&\mathrm{Pr}\{X_{(k)}^{Q^{\beta}(t,\lambda)}<z\big|Q^{\beta}(t,\lambda)=n\}	\mathrm{Pr}\{Q^{\beta}(t,\lambda)=n\}\\
&=\sum_{n=k}^{\infty}\bigg(\sum_{j=k}^{n}\binom{n}{j}F^{j}(z)\left(1-F(z)\right)^{n-j}\bigg)\sum_{l=n}^{\infty}(-1)^{l+n}\binom{l}{n}\frac{\Gamma(l\beta+bt)}{\Gamma(bt)\Gamma(\beta l+1)}\left(\frac{\lambda}{a^{\beta}}\right)^{l}\\
&=\sum_{l=k}^{\infty}\frac{(-1)^{l}\Gamma(l\beta+bt)}{\Gamma(bt)\Gamma(\beta l+1)}\left(\frac{\lambda}{a^{\beta}}\right)^{l}\sum_{n=k}^{l}\binom{l}{n}(-1)^{n}\sum_{j=k}^{n}\binom{n}{j}F^{j}(z)\left(1-F(z)\right)^{n-j}\\
&=\sum_{l=k}^{\infty}\frac{(-1)^{l}\Gamma(l\beta+bt)}{\Gamma(bt)\Gamma(\beta l+1)}\left(\frac{\lambda}{a^{\beta}}\right)^{l}\sum_{j=k}^{l}F^{j}(z)\sum_{n=j}^{l}\binom{l}{n}\binom{n}{j}(-1)^{n}\left(1-F(z)\right)^{n-j}\\
&=\sum_{l=k}^{\infty}\frac{(-1)^{l}\Gamma(l\beta+bt)}{\Gamma(bt)\Gamma(\beta l+1)}\left(\frac{\lambda}{a^{\beta}}\right)^{l}\sum_{j=k}^{l}\binom{l}{j}F^{j}(z)\sum_{n=j}^{l}\binom{l-j}{n-j}(-1)^{n}\left(1-F(z)\right)^{n-j}\\
&=\sum_{l=k}^{\infty}\frac{(-1)^{l}\Gamma(l\beta+bt)}{\Gamma(bt)\Gamma(\beta l+1)}\left(\frac{\lambda}{a^{\beta}}\right)^{l}\sum_{j=k}^{l}\binom{l}{j}F^{j}(z)\sum_{n=0}^{l-j}\binom{l-j}{n}(-1)^{n+j}\left(1-F(z)\right)^{n}\\
&=\sum_{l=k}^{\infty}\frac{(-1)^{l}\Gamma(l\beta+bt)}{\Gamma(bt)\Gamma(\beta l+1)}\left(\frac{\lambda}{a^{\beta}}\right)^{l}F^{l}(z)\sum_{j=k}^{l}\binom{l}{j}(-1)^{j}\\
&=\sum_{j=k}^{\infty}\sum_{l=j}^{\infty}\binom{l}{j}(-1)^{l+j}\left(\frac{F(z)\lambda}{a^{\beta}}\right)^{l}\frac{\Gamma(l\beta+bt)}{\Gamma(bt)\Gamma(\beta l+1)}\\
&=\sum_{j=k}^{\infty}\mathrm{Pr}\{Q^{\beta}(t,\lambda F(z))=j\}=\mathrm{Pr}\{Q^{\beta}(t,\lambda F(z))\ge k\}.
	\end{align*}
	The proof follows on inserting the above expression in \eqref{orderstat}.
\end{proof}

Similarly, it can be shown that the space and the space-time fractional negative binomial process exhibit the $k$th order statistic property as in Theorem \ref{kthorder}. Also, this result holds true for a more general process, namely, the time-changed fractional
Poisson process studied by Kataria and Khandakar (2022c). 

\section{A time-changed version of $M_{f}(t)$}\label{section6}
In this section, we consider the following time-changed process:
\begin{equation}\label{more}
\mathscr{M}_{f}^{\alpha}(t)\coloneqq M_{f}(Y_{\alpha}(t))=M\left(D_{f}(Y_{\alpha}(t))\right), \ t\ge0,
\end{equation}
where the L\'evy subordinator $\{D_{f}(t)\}_{t\ge0}$, the
inverse stable subordinator $\{Y_{\alpha}(t)\}_{t\ge0}$, $0<\alpha<1$ and the GCP $\{M(t)\}_{t\ge0}$ are independent of each other. 

For $k=1$, the process $\{\mathscr{M}_{f}^{\alpha}(t)\}_{t\ge0}$ reduces to a time-changed Poisson process, that is, $\{N\left(D_{f}(Y_{\alpha}(t))\right)\}_{t\ge0}$ (see Beghin and D'Ovidio (2014), Section 3; Orsingher and Toaldo (2015), Remark 2.3). Recently, Beghin and Macci (2016) studied a multivariate version of $\{N\left(D_{f}(Y_{\alpha}(t))\right)\}_{t\ge0}$.

Let $h_{\alpha}(t,x)$ be the density of $\{Y_{\alpha}(t)\}_{t\ge0}$ and $p_{f}^{\alpha}(n,t)=\mathrm{Pr}\{\mathscr{M}_{f}^{\alpha}(t)=n\}$, $n\ge0$, be the state probabilities of $\{\mathscr{M}_{f}^{\alpha}(t)\}_{t\ge0}$. From \eqref{more}, we have
\begin{equation}\label{pfnt}
	p_{f}^{\alpha}(n,t)=\int_{0}^{\infty}p_{f}(n,x)h_{\alpha}(t,x)\mathrm{d}x.
\end{equation}
On taking Caputo fractional derivative in \eqref{pfnt} and using \eqref{pnt}, it can be shown that the state probabilities $p_{f}^{\alpha}(n,t)$ solve the following system of fractional differential equations:
	\begin{equation*}
	\frac{\mathrm{d}^{\alpha}}{\mathrm{d}t^{\alpha}}p_{f}^{\alpha}(n,t)=-f(\Lambda)p_{f}^{\alpha}(n,t)-\sum_{m=1}^{n}\sum_{ \Omega(k,m)}f^{(z_{k})}(\Lambda)\prod_{j=1}^{k}\frac{(-\lambda_{j})^{x_{j}}}{x_{j}!}p_{f}^{\alpha}(n-m,t),
	\end{equation*}
	with initial conditions 
	\begin{equation*}
	p_{f}^{\alpha}(n,0)=\begin{cases*}
	1,\ n=0,\\
	0, \ n\ge 1.
	\end{cases*}
	\end{equation*}
In view of \eqref{space}, the above system of differential equation can be written as 
\begin{equation*}
	\frac{\mathrm{d}^{\alpha}}{\mathrm{d}t^{\alpha}}p_{f}^{\alpha}(n,t)=-f\bigg(\Lambda\Big(I-\frac{1}{\Lambda}\sum_{j=1}^{n\wedge k}\lambda_{j}B^{j}\Big)\bigg)p_{f}^{\alpha}(n,t).
\end{equation*}
On using Proposition 6 of Kataria and Khandakar (2022a), the pgf of $\{\mathscr{M}_{f}^{\alpha}(t)\}_{t\ge0}$ is given by
\begin{align}\label{pgf}
\mathscr{G}_{f}^{\alpha}(u,t)&=\int_{0}^{\infty}G_{f}(u,x)h_{\alpha}(t,x)\mathrm{d}x\nonumber\\
&=\int_{0}^{\infty}\exp\bigg(-xf\Big(\sum_{j=1}^{k}\lambda_{j}(1-u^{j})\Big)\bigg)h_{\alpha}(t,x)\mathrm{d}x\nonumber\\
&=E_{\alpha,1}\bigg(-f\Big(\sum_{j=1}^{k}\lambda_{j}(1-u^{j})\Big)t^{\alpha}\bigg).
\end{align} 
It is known that the Mittag-Leffler function is an eigenfunction of the Caputo fractional derivative. So, it follows that
\begin{equation*}
\frac{\mathrm{d}^{\alpha}}{\mathrm{d}t^{\alpha}}\mathscr{G}_{f}^{\alpha}(u,t)=-f\bigg(\sum_{j=1}^{k}\lambda_{j}(1-u^{j})\bigg)\mathscr{G}_{f}^{\alpha}(u,t),\  \mathscr{G}_{f}^{\alpha}(u,0)=1.
\end{equation*}
\begin{remark}
	On taking $k=1$ and $\lambda_{1}=\lambda$, the pgf (\ref{pgf}) reduces to $\mathscr{G}_{f}^{\alpha}(u,t)\big|_{k=1}=E_{\alpha,1}\big(-f\big(\lambda(1-u)\big)t^{\alpha}\big)$ which agrees with the pgf of $\{N\left(D_{f}(Y_{\alpha}(t))\right)\}_{t\ge0}$ (see Orsingher and Toaldo (2015), Eq. (2.9)).
\end{remark}

Now, we give the mean, variance and covariance function of $\{\mathscr{M}_{f}^{\alpha}(t)\}_{t\ge0}$. We assume that the second order moments of $\{D_{f}(t)\}_{t\ge0}$ are finite. 
The mean and variance of TCGCP-I are given by $\mathbb{E}\left(M_{f}(t)\right)=\sum_{j=1}^{k}j\lambda_{j}\mathbb{E}(D_{f}(t))$ and $\operatorname{Var}(M_{f}(t))=\big(\sum_{j=1}^{k}j\lambda_{j}\big)^{2}\operatorname{Var}(D_{f}(t))+\sum_{j=1}^{k}j^{2}\lambda_{j}\mathbb{E}(D_{f}(t))$ (see  Kataria and Khandakar (2022a), Section 4.1).

 Let $q_{1}=\mathbb{E}\left(M_{f}(1)\right)$ and $q_{2}=\operatorname{Var}(M_{f}(1))$. Then, by using Theorem 2.1 of Leonenko {\it et al.} (2014), we get
\begin{align}
\mathbb{E}\left(\mathscr{M}_{f}^{\alpha}(t)\right)&=q_{1}\mathbb{E}(Y_{\alpha}(t)),\nonumber\\
\operatorname{Var}\left(\mathscr{M}_{f}^{\alpha}(t)\right)&=q_{2}\mathbb{E}(Y_{\alpha}(t))+q_{1}^{2}\operatorname{Var}(Y_{\alpha}(t)),\label{varm}\\
\operatorname{Cov}\left(\mathscr{M}_{f}^{\alpha}(s),\mathscr{M}_{f}^{\alpha}(t)\right)&=q_{2}\mathbb{E}(Y_{\alpha}(s))+q_{1}^{2}\operatorname{Cov}(Y_{\alpha}(s),Y_{\alpha}(t)),\ 0<s\le t\label{covm}.
\end{align}
As $\operatorname{Var}\left(\mathscr{M}_{f}^{\alpha}(t)\right)-\mathbb{E}\left(\mathscr{M}_{f}^{\alpha}(t)\right)>0$, $t>0$, the process $\{\mathscr{M}_{f}^{\alpha}(t)\}_{t\ge0}$ exhibits the overdispersion property.
\begin{theorem}
	The process $\{\mathscr{M}_{f}^{\alpha}(t)\}_{t\ge0}$ has the LRD property.
\end{theorem}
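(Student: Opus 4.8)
The plan is to verify directly the criterion recorded in the LRD definition of Section \ref{section2}: for fixed $s>0$ one must show that $\operatorname{Corr}(\mathscr{M}_{f}^{\alpha}(s),\mathscr{M}_{f}^{\alpha}(t))\sim c(s)t^{-\theta}$ as $t\to\infty$ for some $c(s)>0$ and $\theta\in(0,1)$. All the ingredients are already in place: the variance formula \eqref{varm} and the covariance formula \eqref{covm} reduce everything to the mean, variance and covariance of the inverse stable subordinator $\{Y_{\alpha}(t)\}_{t\ge0}$, whose asymptotics are \eqref{meani}, \eqref{xswe331} and \eqref{covin}. So the proof is essentially an asymptotic bookkeeping argument with no genuine analytic difficulty.

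First I would handle the denominator. Substituting \eqref{meani} and \eqref{xswe331} into \eqref{varm} gives
\[\operatorname{Var}\left(\mathscr{M}_{f}^{\alpha}(t)\right)=q_{2}\frac{t^{\alpha}}{\Gamma(\alpha+1)}+q_{1}^{2}\Big(\frac{2}{\Gamma(2\alpha+1)}-\frac{1}{\Gamma^{2}(\alpha+1)}\Big)t^{2\alpha}.\]
Since $2\alpha>\alpha$, the second term dominates as $t\to\infty$, and the bracketed constant is strictly positive because it is precisely the leading coefficient of the genuinely positive quantity $\operatorname{Var}(Y_{\alpha}(t))$ in \eqref{xswe331}. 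Hence
\[\sqrt{\operatorname{Var}\left(\mathscr{M}_{f}^{\alpha}(t)\right)}\sim q_{1}\sqrt{\frac{2}{\Gamma(2\alpha+1)}-\frac{1}{\Gamma^{2}(\alpha+1)}}\;t^{\alpha}.\]

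Next I would treat the numerator. Fixing $s$ and letting $t\to\infty$ in \eqref{covm}, and invoking the covariance asymptotic \eqref{covin}, the covariance of the time-changed process converges to a finite, $t$-independent, strictly positive limit,
\[\operatorname{Cov}\left(\mathscr{M}_{f}^{\alpha}(s),\mathscr{M}_{f}^{\alpha}(t)\right)\longrightarrow q_{2}\,\mathbb{E}(Y_{\alpha}(s))+q_{1}^{2}\frac{s^{2\alpha}}{\Gamma(2\alpha+1)}=:d(s).\]
Dividing by $\sqrt{\operatorname{Var}(\mathscr{M}_{f}^{\alpha}(s))}\,\sqrt{\operatorname{Var}(\mathscr{M}_{f}^{\alpha}(t))}$ and combining the two displays yields
\[\operatorname{Corr}\left(\mathscr{M}_{f}^{\alpha}(s),\mathscr{M}_{f}^{\alpha}(t)\right)\sim\frac{d(s)}{\sqrt{\operatorname{Var}\left(\mathscr{M}_{f}^{\alpha}(s)\right)}\;q_{1}\sqrt{\frac{2}{\Gamma(2\alpha+1)}-\frac{1}{\Gamma^{2}(\alpha+1)}}}\;t^{-\alpha}=c(s)t^{-\alpha},\]
which is of the required form with $\theta=\alpha\in(0,1)$.

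The only points needing care, rather than any real obstacle, are the dominant-balance observation that $t^{2\alpha}$ overwhelms $t^{\alpha}$ in the variance, and the positivity of the constant $c(s)$; the latter follows since $d(s)>0$ (both summands are positive because $q_{2}=\operatorname{Var}(M_{f}(1))>0$ and $q_{1}>0$) and the variance coefficient under the root is positive. The standing assumption that $\{D_{f}(t)\}_{t\ge0}$ has finite second moments guarantees that $q_{1}$ and $q_{2}$ are finite, which is exactly what makes \eqref{varm} and \eqref{covm} available. This establishes the LRD property and completes the argument.
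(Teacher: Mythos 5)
Your proposal is correct and follows essentially the same route as the paper: both compute $\operatorname{Corr}(\mathscr{M}_{f}^{\alpha}(s),\mathscr{M}_{f}^{\alpha}(t))$ from \eqref{varm} and \eqref{covm}, apply the asymptotics \eqref{meani}--\eqref{covin} for fixed $s$ and large $t$, and conclude the decay $c(s)t^{-\alpha}$ with $\theta=\alpha\in(0,1)$. Your extra remarks on the dominant balance $t^{2\alpha}\gg t^{\alpha}$ and the positivity of the constants are exactly the points the paper leaves implicit, so the argument matches the paper's proof in substance.
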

\begin{proof}
From \eqref{varm} and \eqref{covm}, we get	
	\begin{equation*}
	\operatorname{Corr}\left(\mathscr{M}_{f}^{\alpha}(s),\mathscr{M}_{f}^{\alpha}(t)\right)=\frac{q_{2}\mathbb{E}(Y_{\alpha}(s))+q_{1}^{2}\operatorname{Cov}(Y_{\alpha}(s),Y_{\alpha}(t))}{\sqrt{\operatorname{Var}\left(\mathscr{M}_{f}^{\alpha}(s)\right)}\sqrt{q_{2}\mathbb{E}(Y_{\alpha}(t))+q_{1}^{2}\operatorname{Var}(Y_{\alpha}(t))}}.
	\end{equation*}
On using (\ref{meani})-(\ref{covin}) for fixed $s$ and large $t$, we get
\begin{align*}
	\operatorname{Corr}\left(\mathscr{M}_{f}^{\alpha}(s),\mathscr{M}_{f}^{\alpha}(t)\right)&\sim\frac{q_{2}\mathbb{E}(Y_{\alpha}(s))+\frac{q_{1}^{2}s^{2\alpha}}{\Gamma(2\alpha+1)}}{\sqrt{\operatorname{Var}\left(\mathscr{M}_{f}^{\alpha}(s)\right)}\sqrt{\frac{q_{2}t^{\alpha}}{\Gamma(\alpha+1)}+\frac{2q_{1}^{2}t^{2\alpha}}{\Gamma(2\alpha+1)}-\frac{q_{1}^{2}t^{2\alpha}}{\Gamma^{2}(\alpha+1)}}}\\
	&\sim c(s)t^{-\alpha},
\end{align*}
	where 
	\begin{equation*}
	c(s)=\frac{q_{2}\mathbb{E}(Y_{\alpha}(s))+\frac{q_{1}^{2}s^{2\alpha}}{\Gamma(2\alpha+1)}}{\sqrt{\operatorname{Var}\left(\mathscr{M}_{f}^{\alpha}(s)\right)}\sqrt{\frac{2q_{1}^{2}}{\Gamma(2\alpha+1)}-\frac{q_{1}^{2}}{\Gamma^{2}(\alpha+1)}}}.
	\end{equation*}
	This proves the result.
\end{proof}
\subsection{GCP time-changed by fractional gamma process}
 
Beghin (2015) introduced a fractional gamma process $\{Z_{\alpha}(t)\}_{t\ge0}$ by time-changing the gamma subordinator $\{Z(t)\}_{t\ge0}$ with an independent inverse stable subordinator, that is, 
\begin{equation*}
	Z_{\alpha}(t)\coloneqq Z(Y_{\alpha}(t)).
\end{equation*}
For $x\ge0$, $t\ge0$, the following holds for its density function $g_{\alpha}(x,t)$:
\begin{equation}
	\left\{
	\begin{array}{l}
		\frac{\partial }{\partial x}g_{\alpha}(x,t)=-b(1-\mathscr{O}_{-1,t}^{\alpha})g_{\alpha}(x,t),
		\\
		g_{\alpha}(x,0)=0,\\
		\lim_{x\rightarrow +\infty }g_{\alpha}(x,t)=0,
	\end{array}
	\right.   \label{res1}
\end{equation}
where $\mathscr{O}_{-1,t}^{\alpha}$ is the fractional shift operator (see Beghin (2015), Theorem 5 and Eq. (10)). 

 For the Bern\v stein function $f_1$ given in \eqref{gamma}, the process $\{\mathscr{M}_{f}^{\alpha}(t)\}_{t\ge0}$ reduces to the following:
\begin{equation}\label{mg1}
	\mathscr{M}_{f_{1}}^{\alpha}(t)\coloneqq M(Z(Y_{\alpha}(t)))=M(Z_{\alpha}(t)), \ t\ge0.
\end{equation}

\begin{proposition}
	The pmf $p_{f_{1}}^{\alpha}(n,t)=\mathrm{Pr}\{\mathscr{M}_{f_{1}}^{\alpha}(t)=n\}$, $n\ge0$ of $\{\mathscr{M}_{f_{1}}^{\alpha}(t)\}_{t\ge0}$ satisfies the following fractional differential equations:
	\begin{equation}\label{rin}
		\mathscr{O}_{-1,t}^{\alpha}p_{f_{1}}^{\alpha}(n,t)=	(1+\Lambda/b)p_{f_{1}}^{\alpha}(n,t)-\frac{1}{b}\sum_{j=1}^{n\wedge k}\lambda_{j}p_{f_{1}}^{\alpha}(n-j,t),
	\end{equation}
	with initial conditions 
	\begin{equation*}
		p_{f_{1}}^{\alpha}(n,0)=\begin{cases*}
			1, \ n=0,\\
			0, \ n\ge1.
		\end{cases*}
	\end{equation*}
\end{proposition}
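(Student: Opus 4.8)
The plan is to mirror the argument used for the non-fractional companion result on $\{M_{f_1}(t)\}_{t\ge0}$, replacing the ordinary shift operator $e^{-\partial_t/a}$ by the fractional shift operator $\mathscr{O}_{-1,t}^{\alpha}$ and the system \eqref{res} by \eqref{res1}. First I would condition on the value of the fractional gamma process: since $\mathscr{M}_{f_{1}}^{\alpha}(t)=M(Z_{\alpha}(t))$ by \eqref{mg1} and the GCP is independent of $\{Z_{\alpha}(t)\}_{t\ge0}$, I write
\[
	p_{f_{1}}^{\alpha}(n,t)=\int_{0}^{\infty}p(n,x)g_{\alpha}(x,t)\,\mathrm{d}x,
\]
where $p(n,x)$ is the GCP pmf from \eqref{p(n,t)} and $g_{\alpha}(x,t)$ is the density of $Z_{\alpha}(t)$ appearing in \eqref{res1}.

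Next I would apply $\mathscr{O}_{-1,t}^{\alpha}$ to both sides. As this operator acts only on the time variable, it passes under the integral sign and acts on $g_{\alpha}(x,t)$ alone; rearranging the first line of \eqref{res1} gives $\mathscr{O}_{-1,t}^{\alpha}g_{\alpha}(x,t)=g_{\alpha}(x,t)+\tfrac{1}{b}\tfrac{\partial}{\partial x}g_{\alpha}(x,t)$, so that
\[
	\mathscr{O}_{-1,t}^{\alpha}p_{f_{1}}^{\alpha}(n,t)=p_{f_{1}}^{\alpha}(n,t)+\frac{1}{b}\int_{0}^{\infty}p(n,x)\frac{\partial}{\partial x}g_{\alpha}(x,t)\,\mathrm{d}x .
\]
An integration by parts in $x$ then transfers the derivative onto $p(n,x)$, the boundary contribution vanishing by virtue of $\lim_{x\to\infty}g_{\alpha}(x,t)=0$ from \eqref{res1} and of $g_{\alpha}(0,t)=0$. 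Substituting the GCP governing equation \eqref{cre} with $\beta=1$, written in the variable $x$ as $\tfrac{\mathrm{d}}{\mathrm{d}x}p(n,x)=-\Lambda p(n,x)+\sum_{j=1}^{n\wedge k}\lambda_{j}p(n-j,x)$, and identifying $\int_{0}^{\infty}p(n-j,x)g_{\alpha}(x,t)\,\mathrm{d}x=p_{f_{1}}^{\alpha}(n-j,t)$, collapses the expression to the stated relation \eqref{rin}. The initial condition is read off directly from the process, since $Z_{\alpha}(0)=Z(Y_{\alpha}(0))=0$ forces $\mathscr{M}_{f_{1}}^{\alpha}(0)=M(0)=0$, giving $p_{f_{1}}^{\alpha}(n,0)=\delta_{n,0}$.

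The main obstacle will be the analytic bookkeeping rather than any new idea: one must justify passing the nonlocal operator $\mathscr{O}_{-1,t}^{\alpha}$ through the $x$-integral and confirm the decay and regularity of $g_{\alpha}(x,t)$ needed to kill the boundary term at $x=0$ (as in the classical gamma case this uses $g_{\alpha}(x,t)\to 0$ as $x\downarrow 0$). A second subtlety is that $g_{\alpha}(x,0)=0$ on $(0,\infty)$ places the entire mass at the origin, so the initial condition cannot be obtained by naively integrating $g_{\alpha}(x,0)$ against $p(n,x)$ and must instead be justified from $Z_{\alpha}(0)=0$ as above; I would note this explicitly so that \eqref{rin} and its stated initial data are consistent.
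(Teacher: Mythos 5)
Your proposal is correct and follows essentially the same route as the paper's proof: writing $p_{f_{1}}^{\alpha}(n,t)=\int_{0}^{\infty}p(n,x)g_{\alpha}(x,t)\,\mathrm{d}x$, applying $\mathscr{O}_{-1,t}^{\alpha}$ under the integral via the rearranged first equation of \eqref{res1}, integrating by parts with vanishing boundary terms, and substituting \eqref{cre} with $\beta=1$. Your additional remarks on the boundary term at $x=0$ and on deriving the initial condition from $Z_{\alpha}(0)=0$ rather than from $g_{\alpha}(x,0)$ address points the paper passes over silently, but they do not change the argument.
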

\begin{proof}
	From \eqref{mg1}, we can write
	\begin{equation}\label{jhi1}
		p_{f_{1}}^{\alpha}(n,t)=\int_{0}^{\infty}p(n,x)g_{\alpha}(x,t)\mathrm{d}x.
	\end{equation}
On both sides of \eqref{jhi1}, we apply the fractional shift operator $\mathscr{O}_{-1,t}^{\alpha}$ and use \eqref{res1} to obtain
	\begin{align*}
		\mathscr{O}_{-1,t}^{\alpha}p_{f_{1}}^{\alpha}(n,t)&=\int_{0}^{\infty}p(n,x)\Big(g_{\alpha}(x,t)+\frac{1}{b}\frac{\partial }{\partial x}g_{\alpha}(x,t)\Big)\mathrm{d}x\\
		&=p_{f_{1}}^{\alpha}(n,t)+\frac{1}{b}\left[p(n,x)g_{\alpha}(x,t)\right]_{x=0}^{\infty}-\frac{1}{b}\int_{0}^{\infty}g_{\alpha}(x,t)\frac{\mathrm{d}}{\mathrm{d}x}p(n,x)\mathrm{d}x\\
		&=p_{f_{1}}^{\alpha}(n,t)-\frac{1}{b}\int_{0}^{\infty}g_{\alpha}(x,t)\Big(-\Lambda p(n,x)+	\sum_{j=1}^{n\wedge k}\lambda_{j}p(n-j,x)\Big)\mathrm{d}x,
	\end{align*}
 where in the last step we have used \eqref{cre} with $\beta=1$. This coincides with the desired result by considering \eqref{jhi1}.
\end{proof}
\subsection{Space-time fractional version of the GCP}

 We take the Bern\v stein function $f_2$ in \eqref{more}  to obtain the space-time fractional version of the GCP, namely, the generalized space-time fractional counting process (GSTFCP) $\{\mathscr{M}_{f_{2}}^{\alpha}(t)\}_{t\ge0}$, that is, 
 \begin{equation*}
 	\mathscr{M}_{f_{2}}^{\alpha}(t)\coloneqq M(D_{\beta}(Y_{\alpha}(t))), \ t\ge0.
 \end{equation*}
 For $1\le j \le k$, if $\lambda_{j}=\lambda$ and $\lambda_{j}=\lambda(1-\rho)\rho^{j-1}/(1-\rho^{k})$, $0\le \rho<1$, then the GSTFCP reduces to the space-time fractional version of PPoK and PAPoK, repectively.
 
  From \eqref{pgf}, the pgf of GSTFCP is given by
	\begin{equation}\label{pgfgstfcp}
		\mathscr{G}_{f_{2}}^{\alpha}(u,t)=E_{\alpha,1}\bigg(-\Big(\sum_{j=1}^{k}(1-u^{j})\lambda_{j}\Big)^{\beta}t^{\alpha}\bigg).
	\end{equation}
For $k=1$, it reduces to the pgf of STFPP (see Orsingher and Polito (2012), Eq. (2.28)). It solves the following differential equations:
	 \begin{equation*}
		\frac{\mathrm{d}^{\alpha}}{\mathrm{d}t^{\alpha}}\mathscr{G}_{f_{2}}^{\alpha}(u,t)=-\bigg(\sum_{j=1}^{k}(1-u^{j})\lambda_{j}\bigg)^{\beta}\mathscr{G}_{f_{2}}^{\alpha}(u,t), \ \mathscr{G}_{f_{2}}^{\alpha}(u,0)=1.
	\end{equation*}
Let $X_{i}$, $i\ge1$ be iid uniform random variables in $[0,1]$ and $\{N_{\alpha}(t,\Lambda^{\beta})\}_{t\ge0}$ be the TFPP with intensity $\Lambda^{\beta}$ such that  $\min_{0\le i \le N_{\alpha}(t,\Lambda^{\beta})} X_{i}^{1/\beta}= 1$ when $N_{\alpha}(t,\Lambda^{\beta})=0$. Then,
for $0<u<1$, the pgf of GSTFCP can be written as 
\begin{align}\label{uniform}
\mathscr{G}_{f_{2}}^{\alpha}(u,t)&=\sum_{m=0}^{\infty}\frac{(-t^{\alpha}\Lambda^{\beta})^{m}}{\Gamma(m\alpha+1)}\Big(1-\frac{1}{\Lambda}\sum_{j=1}^{k}\lambda_{j}u^{j}\Big)^{\beta m}\nonumber\\
&=\sum_{m=0}^{\infty}\frac{(-t^{\alpha}\Lambda^{\beta})^{m}}{\Gamma(m\alpha+1)}\sum_{r=0}^{m}(-1)^{r}\binom{m}{r}\bigg(1-\Big(1-\frac{1}{\Lambda}\sum_{j=1}^{k}\lambda_{j}u^{j}\Big)^{\beta}\bigg)^{r}\nonumber\\
&=\sum_{r=0}^{\infty}\bigg(1-\Big(1-\frac{1}{\Lambda}\sum_{j=1}^{k}\lambda_{j}u^{j}\Big)^{\beta}\bigg)^{r}\sum_{m=r}^{\infty}(-1)^{m-r}\binom{m}{r}\frac{(t^{\alpha}\Lambda^{\beta})^{m}}{\Gamma(m\alpha+1)}\nonumber\\
	&=\sum_{r=0}^{\infty}\bigg(\mathrm{Pr}\Big\{X_{r}^{1/\beta}\ge 1-\frac{1}{\Lambda}\sum_{j=1}^{k}\lambda_{j}u^{j}\Big\}\bigg)^{r}\mathrm{Pr}\{N_{\alpha}(t,\Lambda^{\beta})=r\}\nonumber\\
	&=\mathrm{Pr}\bigg\{\min_{0\le r\le N_{\alpha}(t,\Lambda^{\beta})} X_{r}^{1/\beta}\ge 1-\frac{1}{\Lambda}\sum_{j=1}^{k}\lambda_{j}u^{j}\bigg\}.
\end{align}

The result obtained in \eqref{uniform} is of particular interest as it connects
the GSTFCP with TFPP via uniform random variables. On using the generalized binomial theorem to expand the right hand side of \eqref{pgfgstfcp}, we obtain the pmf of GSTFCP in the following form: 
\begin{equation}\label{pmff2}
	\mathrm{Pr}\{\mathscr{M}_{f_{2}}^{\alpha}(t)=n\}=\sum_{ \Omega(k,n)}\prod_{j=1}^{k}\frac{\lambda_{j}^{x_{j}}}{x_{j}!}\frac{(-1)^{z_{k}}}{\Lambda^{z_{k}}}\sum_{m=0}^{\infty}\frac{(-\Lambda^{\beta}t^{\alpha})^{m}}{\Gamma(\alpha m+1)}\frac{\Gamma(\beta m+1)}{\Gamma(\beta m+1-z_{k})},\ n\ge0.
\end{equation}
The steps involved to obtain \eqref{pmff2} are similar to the one involved in the proof of \eqref{space}.

		Next, we obtain an alternate form of the pmf of GSTFCP. The pmf \eqref{p(n,t)} of GCP can be re-written as follows:
		\begin{equation*}
			p(n,t)=\sum_{\Omega(k,n)}\prod_{j=1}^{k}\frac{\lambda_{j}^{x_{j}}}{x_{j}!}(-\partial_{\Lambda})^{z_{k}}e^{-\Lambda t}, \ n\ge 0.
		\end{equation*}
		Then, from \eqref{more}, we have
		\begin{align}
			\mathrm{Pr}\{\mathscr{M}_{f_{2}}^{\alpha}(t)=n\}&=\int_{0}^{\infty}p(n,s)\mathrm{Pr}\{D_{\beta}(Y_{\alpha}(t))\in\mathrm{d}s\}\nonumber\\
			&=\int_{0}^{\infty}\sum_{\Omega(k,n)}\prod_{j=1}^{k}\frac{\lambda_{j}^{x_{j}}}{x_{j}!}(-\partial_{\Lambda})^{z_{k}}e^{-\Lambda s}\mathrm{Pr}\{D_{\beta}(Y_{\alpha}(t))\in\mathrm{d}s\}\nonumber\\
			&=\sum_{\Omega(k,n)}\prod_{j=1}^{k}\frac{\lambda_{j}^{x_{j}}}{x_{j}!}(-\partial_{\Lambda})^{z_{k}}\mathbb{E}\left(e^{-\Lambda D_{\beta}(Y_{\alpha}(t))}\right)\label{pmfstfpp1}\\
			&=\sum_{\Omega(k,n)}\prod_{j=1}^{k}\frac{\lambda_{j}^{x_{j}}}{x_{j}!}(-\partial_{\Lambda})^{z_{k}}E_{\alpha,1}\left(-\Lambda^{\beta}t^{\alpha}\right)\label{alternatepmf},
		\end{align}
		where in the last step we have used Lemma 3.1 of  Beghin and D'Ovidio (2014). Thus, the pgf of GSTFCP can alternatively be obtained as follows:
	\begin{align*}
		\mathscr{G}_{f_{2}}^{\alpha}(u,t)&=\sum_{n=0}^{\infty}u^{n}\mathrm{Pr}\{\mathscr{M}_{f_{2}}^{\alpha}(t)=n\}\\
		&=\sum_{n=0}^{\infty}u^{n}\sum_{\Omega(k,n)}\prod_{j=1}^{k}\frac{\lambda_{j}^{x_{j}}}{x_{j}!}(-\partial_{\Lambda})^{z_{k}}\mathbb{E}\left(e^{-\Lambda D_{\beta}(Y_{\alpha}(t))}\right),\ (\text{using}\ \eqref{pmfstfpp1})\\
		&=\sum_{r=0}^{\infty}\Big(\sum_{j=1}^{k}\lambda_{j}u^{j}\Big)^{r}\frac{(-\partial_{\Lambda})^{r}}{r!}\mathbb{E}\left(e^{-\Lambda D_{\beta}(Y_{\alpha}(t))}\right)\\
		&=\mathbb{E}\big(e^{-\sum_{j=1}^{k}(1-u^{j})\lambda_{j}D_{\beta}(Y_{\alpha}(t))}\big),
	\end{align*}
	which reduces to \eqref{pgfgstfcp} on using Lemma 3.1 of Beghin and D'Ovidio (2014).

Let $\{N^{\alpha,\beta}(t)\}_{t\ge0}$ denotes the STFPP with intensity $\lambda$. For notational convenience, let $N^{\alpha,\beta}(t, \lambda)=N^{\alpha,\beta}(t)$. On taking $k=1$  and $\Lambda=\lambda_{1}=\lambda$ (say) in \eqref{pmff2}, we get 
\begin{equation}\label{pmfstfpp}
	\mathrm{Pr}\{N^{\alpha,\beta}(t,\lambda)=n\}=\frac{(-1)^{n}}{n!}\sum_{m=0}^{\infty}\frac{(-\lambda^{\beta}t^{\alpha})^{m}\Gamma(\beta m+1)}{\Gamma(\beta m+1-n)\Gamma(\alpha m+1)},
\end{equation}
which agrees with the pmf of STFPP (see Orsingher and Polito (2012), Eq. (2.29)).
 Again taking $k=1$ and $\Lambda=\lambda$ in \eqref{alternatepmf}, we get an alternate version of the pmf of STFPP (see Beghin and D'Ovidio (2014), Eq. (3.19)).
\begin{theorem}\label{statsticst}
	Let $X_{(k)}^{N^{\alpha,\beta}(t,\lambda)}$ be a $k$th order statistic from the sample of size $N^{\alpha,\beta}(t,\lambda)$, $k\in \{1,2,\dots, N^{\alpha,\beta}(t,\lambda)\}$ and $\{X_{i}\}_{i\ge1}$ be a sequence of iid random variables with distribution function $F$. Then, 
	\begin{equation*}
		\mathrm{Pr}\{X_{(k)}^{N^{\alpha,\beta}(t,\lambda)}<z\big|N^{\alpha,\beta}(t,\lambda)\ge k\}=\frac{\mathrm{Pr}\{N^{\alpha,\beta}(t,\lambda F(z))\ge k\}}{\mathrm{Pr}\{N^{\alpha,\beta}(t,\lambda)\ge k\}}.	
	\end{equation*}
\end{theorem}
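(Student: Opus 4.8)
The plan is to mirror the proof of Theorem \ref{kthorder}, with the time fractional negative binomial pmf replaced by the STFPP pmf \eqref{pmfstfpp}. First I would peel off the sample size by writing
\[
\mathrm{Pr}\{X_{(k)}^{N^{\alpha,\beta}(t,\lambda)}<z\mid N^{\alpha,\beta}(t,\lambda)\ge k\}=\frac{\sum_{n=k}^{\infty}\mathrm{Pr}\{X_{(k)}^{n}<z\}\,\mathrm{Pr}\{N^{\alpha,\beta}(t,\lambda)=n\}}{\mathrm{Pr}\{N^{\alpha,\beta}(t,\lambda)\ge k\}},
\]
using the independence of the $X_i$ from the sample size. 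The denominator is already the target quantity, so the whole task reduces to evaluating the numerator.

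For the numerator I would insert the order-statistics formula \eqref{stan} and the STFPP pmf, which I would first recast compactly as $\mathrm{Pr}\{N^{\alpha,\beta}(t,\lambda)=n\}=\sum_{m=0}^{\infty}(-1)^{n}\binom{\beta m}{n}\frac{(-\lambda^{\beta}t^{\alpha})^{m}}{\Gamma(\alpha m+1)}$, which is equivalent to \eqref{pmfstfpp} via $\binom{\beta m}{n}=\Gamma(\beta m+1)/(n!\,\Gamma(\beta m+1-n))$. After interchanging the orders of summation to free the $m$- and $j$-indices, the $n$-sum factors through the generalized Vandermonde identity $\binom{\beta m}{n}\binom{n}{j}=\binom{\beta m}{j}\binom{\beta m-j}{n-j}$, leaving the inner series $\sum_{p\ge0}(-1)^{p}\binom{\beta m-j}{p}(1-F(z))^{p}$ in the variable $p=n-j$.

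The main step, and the place where the space-fractional index genuinely enters, is summing this inner series. Since $0\le 1-F(z)<1$, the binomial series yields $\sum_{p\ge0}(-1)^{p}\binom{\beta m-j}{p}(1-F(z))^{p}=F(z)^{\beta m-j}$ for the (generically non-integer) exponent $\beta m-j$; multiplying by the $F(z)^{j}$ already present gives exactly $F(z)^{\beta m}$, which is precisely the extra factor that turns $\lambda^{\beta m}$ into $(\lambda F(z))^{\beta m}$ in the Mittag-Leffler coefficient, i.e. that replaces the intensity $\lambda$ by $\lambda F(z)$. I expect reconciling this non-integer power $F(z)^{\beta m}$ with the integer-power binomial structure of \eqref{stan} to be the only delicate point; the remaining manipulations are routine rearrangements (together with a standard justification of the interchange of summation for $0<F(z)\le1$). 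Collecting terms then produces $\sum_{j\ge k}\mathrm{Pr}\{N^{\alpha,\beta}(t,\lambda F(z))=j\}=\mathrm{Pr}\{N^{\alpha,\beta}(t,\lambda F(z))\ge k\}$, which is the asserted numerator.

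Alternatively, and more transparently, I would argue by subordination and Poisson thinning. Taking $k=1$ and the stable Bern\v stein function in \eqref{more} gives $N^{\alpha,\beta}(t,\lambda)=M(D_\beta(Y_\alpha(t)))$ with $M$ a rate-$\lambda$ Poisson process, so conditionally on the random time $D_\beta(Y_\alpha(t))=s$ the sample size is $\mathrm{Poisson}(\lambda s)$. The event $\{X_{(k)}<z\}\cap\{N^{\alpha,\beta}(t,\lambda)\ge k\}$ coincides with $\{\#\{i:X_i<z\}\ge k\}$, and by the thinning theorem the count $\#\{i:X_i<z\}$ is $\mathrm{Poisson}(\lambda F(z)s)$; integrating over the law of $D_\beta(Y_\alpha(t))$ identifies the numerator with $\mathrm{Pr}\{N^{\alpha,\beta}(t,\lambda F(z))\ge k\}$ immediately, bypassing the binomial identity. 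Either route then completes the proof, and the same scheme applies verbatim to the space and space-time fractional negative binomial processes.
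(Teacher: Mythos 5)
Your first route is, up to notation, the paper's own proof: the paper conditions on the sample size, inserts \eqref{stan} and \eqref{pmfstfpp}, interchanges the three sums, and collapses the inner $n$-sum via
$\sum_{n\ge0}(-1)^{n}(1-F(z))^{n}/\bigl(n!\,\Gamma(\beta m+1-n-j)\bigr)=F(z)^{\beta m-j}/\Gamma(\beta m+1-j)$,
which is exactly your generalized binomial series written in Gamma-function form; your identity $\binom{\beta m}{n}\binom{n}{j}=\binom{\beta m}{j}\binom{\beta m-j}{n-j}$ (this is the trinomial-revision identity rather than Vandermonde's, but it is correctly stated and valid for non-integer upper index) is what the paper performs implicitly when it regroups the factorials, so on that route you have reproduced the paper's argument, including the convergence caveat for $0<F(z)\le1$ that the paper leaves tacit. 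Your second route, however, is genuinely different from anything in the paper and is arguably the better argument: writing $N^{\alpha,\beta}(t,\lambda)\stackrel{d}{=}M(D_{\beta}(Y_{\alpha}(t)))$ with $M$ a rate-$\lambda$ Poisson process (the $k=1$ case of \eqref{more}), observing that $\{X_{(k)}^{N^{\alpha,\beta}(t,\lambda)}<z\}\cap\{N^{\alpha,\beta}(t,\lambda)\ge k\}$ is precisely the event that at least $k$ of the sampled $X_{i}$ fall below $z$, and thinning the conditionally Poisson$(\lambda s)$ sample to a Poisson$(\lambda F(z)s)$ count before integrating over the law of $D_{\beta}(Y_{\alpha}(t))$, identifies the numerator with $\mathrm{Pr}\{N^{\alpha,\beta}(t,\lambda F(z))\ge k\}$ with no series manipulation at all. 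This explains conceptually why the conclusion amounts to ``replace $\lambda$ by $\lambda F(z)$'' --- it is pure Poisson thinning, which survives any independent time change --- it sidesteps the interchange-of-summation issues of the double series (and the degenerate case $F(z)=0$, where the binomial series argument breaks down but the statement holds trivially), and it yields in one stroke the extensions the paper only asserts afterwards: Theorem \ref{kthorder} for the time fractional negative binomial process, the space and space-time fractional negative binomial cases, and the Polito--Scalas process mentioned in the concluding remark, since all of these are Poisson processes run on an independent random clock. Because the theorem involves only a single fixed $t$, the one-dimensional distributional identity $N^{\alpha,\beta}(t,\lambda)\stackrel{d}{=}M(D_{\beta}(Y_{\alpha}(t)))$ is all the thinning argument requires, so it is complete as you state it.
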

\begin{proof}
	On using the conditional probability law, we get
\small	\begin{equation}\label{orderstat1}
		\mathrm{Pr}\{X_{(k)}^{N^{\alpha,\beta}(t,\lambda)}<z\big|N^{\alpha,\beta}(t,\lambda)\ge k\}
		=\frac{\sum_{n=k}^{\infty}\mathrm{Pr}\{X_{(k)}^{N^{\alpha,\beta}(t,\lambda)}<z\big|N^{\alpha,\beta}(t,\lambda)=n\}	\mathrm{Pr}\{N^{\alpha,\beta}(t,\lambda)=n\}}{\mathrm{Pr}\{N^{\alpha,\beta}(t,\lambda)\ge k\}}.
	\end{equation}
\normalsize
Using \eqref{stan} and \eqref{pmfstfpp}, the quantity in the numerator of the right hand side of \eqref{orderstat1} can be evaluated as follows:
	\begin{align*}
		\sum_{n=k}^{\infty}&\mathrm{Pr}\{X_{(k)}^{N^{\alpha,\beta}(t,\lambda)}<z\big|N^{\alpha,\beta}(t,\lambda)=n\}	\mathrm{Pr}\{N^{\alpha,\beta}(t,\lambda)=n\}\\
		&=\sum_{n=k}^{\infty}\sum_{j=k}^{n}\binom{n}{j}F^{j}(z)\left(1-F(z)\right)^{n-j}\frac{(-1)^{n}}{n!}\sum_{m=0}^{\infty}\frac{(-\lambda^{\beta}t^{\alpha})^{m}\Gamma(\beta m+1)}{\Gamma(\beta m+1-n)\Gamma(\alpha m+1)}\\
		&=\sum_{j=k}^{\infty}\sum_{m=0}^{\infty}\sum_{n=j}^{\infty}\frac{F^{j}(z)}{(n-j)!j!}\left(1-F(z)\right)^{n-j}\frac{(-1)^{n}(-\lambda^{\beta}t^{\alpha})^{m}\Gamma(\beta m+1)}{\Gamma(\beta m+1-n)\Gamma(\alpha m+1)}\\
		&=\sum_{j=k}^{\infty}\frac{(-1)^{j}F^{j}(z)}{j!}\sum_{m=0}^{\infty}\frac{(-\lambda^{\beta}t^{\alpha})^{m}\Gamma(\beta m+1)}{\Gamma(\alpha m+1)}\sum_{n=0}^{\infty}\frac{(-1)^{n}\left(1-F(z)\right)^{n}}{n!\Gamma(\beta m+1-n-j)}\\
		&=\sum_{j=k}^{\infty}\frac{(-1)^{j}}{j!}\sum_{m=0}^{\infty}\frac{(-\lambda^{\beta}t^{\alpha})^{m}\Gamma(\beta m+1)}{\Gamma(\alpha m+1)}\frac{\left(F(z)\right)^{\beta m}}{\Gamma(\beta m+1-j)}\\
		&=\sum_{j=k}^{\infty}\frac{(-1)^{j}}{j!}\sum_{m=0}^{\infty}\frac{(-\lambda^{\beta}F^{\beta}(z)t^{\alpha})^{m}\Gamma(\beta m+1)}{\Gamma(\alpha m+1)\Gamma(\beta m+1-j)}\\
		&=\sum_{j=k}^{\infty}\mathrm{Pr}\{N^{\alpha,\beta}(t,\lambda F(z))=j\}=\mathrm{Pr}\{N^{\alpha,\beta}(t,\lambda F(z))\ge k\}.
	\end{align*}
		The proof follows on inserting the above expression in \eqref{orderstat1}.
\end{proof}
\begin{remark}
	Polito and Scalas (2016) studied a time-changed Poisson process that generalizes the STFPP. Its state probabilities (see Polito and Scalas (2016), Eq. (3.19)) satisfy a system of differential equations that involves Prabhakar derivative. It can be shown that this process also has the $k$th order statistic property, as discussed in Theorem \ref{statsticst}.
\end{remark}

\end{document}